\newtheorem{theorem}{Theorem}
\newtheorem{proposition}[theorem]{Proposition}
\newtheorem{lemma}[theorem]{Lemma}
\theoremstyle{definition}
\newtheorem{definition}[theorem]{Definition}
\newtheorem{example}[theorem]{Example}
\theoremstyle{remark}
\newtheorem{remark}[theorem]{Remark}
\renewcommand{\algorithmicrequire}{{\bf Input:}}
\renewcommand{\algorithmicensure}{{\bf Output:}}
\def\AA{\mathcal A}
\def\II{\mathcal I}
\def\Id{I}
\def\N{\mathbb N}
\def\Z{\mathbb Z}
\def\Formulas{\mathbf{Form}}
\def\Sent{\mathbf{Sent}}
\def\Term{\mathbf{Term}}
\def\Ground{\mathbf{Ground}}
\def\ObS{\mathbf{Ob}}
\def\Var{\mathbf{Var}}
\def\Sort{\mathbf{Sort}}
\def\Con{\mathbf{Con}}
\def\Fun{\mathbf{Fun}}
\def\Zero{\mathbf{Zero}}
\def\Arith{\mathbf{Arith}}
\def\Free{\mathbf{Free}}
\DeclareMathOperator{\CNF}{\textnormal{CNF}}
\DeclareMathOperator{\acker}{\mathcal{T}}
\DeclareMathOperator{\Axioms}{\mathcal{A}}
\DeclareMathOperator{\assign}{\mathfrak{a}}
\def\<#1>{\langle#1\rangle}
\DeclareMathOperator{\Ob}{Ob}
\DeclareMathOperator{\Mor}{Mor}
\begin{document}

\title{Universal truth of operator statements\\ via ideal membership}

 \author[1]{Clemens Hofstadler\corref{cor1}}
\ead{clemens.hofstadler@jku.at}
\author[2]{Clemens G.~Raab}
\ead{clemens.raab@ricam.oeaw.ac.at}
\author[3]{Georg Regensburger}
\ead{regensburger@mathematik.uni-kassel.de}

\cortext[cor1]{Corresponding author}
\affiliation[1]{organization={Johannes Kepler University},
		addressline={Altenberger Stra{\ss}e 69},
		postcode={4040 Linz}, 
		country={Austria}}
\affiliation[2]{organization={Johann Radon Institute for Computational and Applied Mathematics (RICAM), Austrian Academy of Sciences},
	  addressline={Altenberger Stra\ss e 69},
           postcode={4040 Linz},
           country={Austria}} 
\affiliation[3]{organization={University of Kassel},
		addressline={Heinrich-Plett-Stra\ss e 40},
		postcode={34132 Kassel}, 
		country={Germany}}

\begin{abstract}
We introduce a framework for proving statements about linear operators by verification of ideal membership in a free algebra.
More specifically, arbitrary first-order statements about identities of morphisms in preadditive semicategories can be treated.
We present a semi-decision procedure for validity of such formulas based on computations with noncommutative polynomials.
These algebraic computations automatically incorporate linearity and benefit from efficient ideal membership procedures.

In the framework, domains and codomains of operators are modelled using many-sorted first-order logic.
To eliminate quantifiers and function symbols from logical formulas, we apply Herbrand's theorem and Ackermann's reduction.
The validity of the resulting formulas is shown to be equivalent to finitely many ideal memberships of noncommutative polynomials.
We explain all relevant concepts and discuss computational aspects.
Furthermore, we illustrate our framework by proving concrete operator statements assisted by our computer algebra software. 
\end{abstract}

\begin{keyword}
linear operators \sep
   preadditive semicategories \sep
  noncommutative polynomials \sep
  many-sorted first-order logic \sep
  Herbrand's theorem \sep
  Ackermann's reduction
  
  \MSC[2020] 03B35, 18E05, 68V15 (Primary); 16B50 (Secondary)
\end{keyword}


\maketitle

\tableofcontents

\section{Introduction}

Linear operators appear in several forms in many different settings all across mathematics.
They can be ring elements (as in $C^*$-algebras), but also (rectangular) matrices, or, more generally,  vector space and module homomorphisms (like bounded linear operators on Hilbert spaces).
Many statements in homological algebra, typically expressed in the language of abelian categories, also describe properties of linear operators.
In this work, we develop a framework to efficiently prove the validity of first-order statements about operator identities phrased in any of these settings by verifying ideal membership of noncommutative polynomials in a free algebra.


Translating operator identities into noncommutative polynomials provides several computational advantages.
Naturally, polynomial computations respect linearity. 
Furthermore, as our framework shows, restrictions imposed by domains and codomains of operators can be ignored when performing polynomial arithmetic.
Finally, one can exploit efficient computer algebra implementations and heuristics for polynomial computations, in particular, for verifying ideal membership.
These routines can also compute certificates for ideal membership that serve as an independent and easily verifiable proof.


Noncommutative polynomials have been used previously to model and analyse operator identities.
Starting from the pioneering work~\cite{helton1994rules,helton1998computer}, where polynomial techniques are used to simplify matrix identities in linear systems theory,
over~\cite{HS99}, where similar methods are used to discover operator identities and to solve matrix equations,
until~\cite{SL20,schmitz-master}, where proving operator identities via polynomial computations and related questions are addressed. 
Two of the authors also developed a framework that allows to infer the validity of a statement about operators
from ideal membership of noncommutative polynomials~\cite{RRH21}.
This framework can treat propositional statements where several identities (the assumptions) imply another identity (the claim).   


Our approach extends this framework, which was the motivation for the present work and builds the foundation for our results,
 in several ways.
First of all, we allow more general first-order statements that can include quantifiers, function symbols, and all boolean connectives.
We note that none of the previous algebraic approaches mentioned above can handle such general statements.
Secondly, while~\cite{RRH21} only gives a sufficient algebraic condition for an operator statement to hold, we provide with our main result (Theorem~\ref{thm nullstellensatz}) an equivalence between universal truth of operator statements and ideal membership of noncommutative polynomials.
This allows us to state with Procedure~\ref{algo semantic consequence} a semi-decision procedure for verifying the validity of operator statements based on 
polynomial computations.


In our framework, operator statements are treated as statements about \emph{morphisms} in \emph{preadditive semicategories}.
Semicategories are categories without identity morphisms.
They provide a natural and very general environment for our application, prescribing only linearity as a structural constraint.
We give an introduction to semicategories in Section~\ref{sec:algebraic-preliminaries} and show how all the aforementioned settings (rings, matrices, homomorphisms, abelian categories) can be considered as special instances of preadditive semicategories.
In that section, we also recall all the fundamentals of noncommutative polynomials in free algebras required in this work.


Before formally introducing our algebraic framework, we provide in Section~\ref{sec:uniqueness} an informal illustration of its usage. 
The goal of this section is to give an intuitive understanding of the framework,
demonstrating how basic operator statements can be proven by polynomial computations and that results about abstract morphisms allow to infer statements about linear operators in concrete settings.
In this way, readers can familiarise themselves with some of the core concepts of this work, such as noncommutative polynomials and morphisms in semicategories.


To formally model statements about morphisms in semicategories, we use \emph{many-sorted first-order logic}.
It extends classical first-order logic by assigning a \emph{sort} to each symbol.
These sorts allow to model objects from different universes and restrict which expressions can be formed.
For completeness, we provide an introduction to many-sorted first-order logic in~\ref{sec:many-sorted-logic}, see also~\cite{Man93} for a textbook exposition.

In Section~\ref{sec:modelling-operator-statements}, we describe how many-sorted logic can be used to model statements about morphisms and linear operators.
In particular, our logic consists of many-sorted first-order formulas with equality as the only predicate symbol, where sorts are used to represent domains and codomains of operators.
We call any well-formed formula in this logic an \emph{operator statement}.
In our language, we can, in particular, express the axioms of preadditive semicategories.
This allows us to specify the notion of \emph{universal truth} of operator statements via the logical concept of semantic consequence of these axioms.


G\"odel's famous completeness theorem~\cite{godel1930vollstandigkeit} for first-order logic states that semantic consequences can be validated by \emph{formal computations}.
A formal computation is a purely syntactic procedure performed on the formulas using a deductive system. 
Such deductive systems have to treat linearity axioms separately and ensure that all formed expressions respect the restrictions imposed by the sorts.
We provide an example of a deductive system in~\ref{sec:formal-comp}, namely a sequent calculus for many-sorted first-order logic with equality.
This sequent calculus will be crucial for proving our main result (Theorem~\ref{thm nullstellensatz}).


In Section~\ref{sec:arithmetic-statements}, we explain how a formal computation can be replaced by a computation with noncommutative polynomials.
In contrast to deductive systems, these polynomial computations naturally include linearity and, as will be shown, restrictions induced by sorts can be neglected.
When translating operator statements into polynomial statements, quantifiers and function symbols have to be treated separately. 
To this end, we recall two important concepts from the field of automated theorem proving, namely \emph{Herbrand's theorem}~\cite{Her30,buss1994herbrand} and \emph{Ackermann's reduction}~\cite{Ack54} in Section~\ref{sec:herbrand-ackermann}.
These procedures allow to reduce arbitrary operator statements to \emph{arithmetic operator statements}, containing no quantifiers and only arithmetic function symbols.
This restricted class of formulas allows for a direct translation into algebraic statements about noncommutative polynomials and builds the foundation for treating arbitrary operator statements.


To characterise universal truth of arithmetic operator statements, we introduce the process of \emph{idealisation} (Section~\ref{sec:idealisation}), which assigns to every such formula a predicate about polynomial ideal membership.
We then show in Section~\ref{sec:main-theorem} that this idealisation is true if and only if the corresponding operator statement is universally true.
In this way, universal truth of operator statements is reduced to ideal membership of noncommutative polynomials.




Since the fragment of first-order logic that we consider is only semi-decidable, we cannot hope to obtain a terminating algorithm for deciding universal truth of first-order operator statements.
Nevertheless, our results allow to present a semi-decision procedure (Procedure~\ref{algo semantic consequence}) that terminates if and only if an operator statement is universally true.
Our procedure is an adaptation of Gilmore's algorithm~\cite{gilmore1960proof} interleaved with ideal membership verifications.
We introduce this semi-decision procedure in Section~\ref{sec:all-operator-statements} and discuss computational aspects (Section~\ref{sec computational aspects}) that are relevant for obtaining an efficient procedure that can treat large, non-trivial examples.
Furthermore, we present in Section~\ref{sec:AE-quasi-identities} how Procedure~\ref{algo semantic consequence} simplifies for the class of \emph{arithmetic $\forall\exists$-quasi identities},
a class of operator statements that regularly appears in practical applications.

Finally, in Section~\ref{sec:applications}, we present several applications of our framework and show its capabilities.
First, we illustrate our methods on an introductory example about the existence of the Moore-Penrose inverse.
We also discuss a case study on statements regarding the Moore-Penrose inverse conducted by two of the authors~\cite{casc-2023}.
Using the framework developed in this work in combination with our computer algebra package \texttt{operator\_gb}
\footnote{available for \textsc{SageMath} at \url{https://github.com/ClemensHofstadler/operator_gb}
and for \textsc{Mathematica} at \url{https://github.com/ClemensHofstadler/OperatorGB}}~\cite{HRR19,Hof23} to perform the polynomial computations,
the proofs of a variety of operator statements could be automated, ranging from classical facts to recent results. 
Moreover, we outline how our approach in combination with our software helped in a recent joint publication~\cite{cvetkovic2021algebraic} to find and automatically prove new results in the field of generalised inverses.
Finally, we sketch how the framework allows to formulate and automatically prove well-known statements in abelian categories that typically require tedious diagram chasing proofs.


\section{Algebraic preliminaries}
\label{sec:algebraic-preliminaries}

\subsection{Preadditive semicategories} 

As mentioned before, linear operators appear in various forms (ring elements, matrices, homomorphisms, etc.).
To treat all these different contexts uniformly, we require an algebraic structure that generalises across these settings.
To this end, we use concepts from category theory, in particular, that of \emph{preadditive semicategories}, whose basics we recall in this section.


Category theory is a powerful and abstract framework for studying mathematical structures and relationships between them.
For a textbook exposition of category theory, we refer, for example, to~\cite{Mac13}.
At its core, category theory explores the notion of a category, or more generally, that of a semicategory.

\begin{definition}
\label{def:semicategory}
A \emph{(locally small) semicategory} $\mathcal{S}$ consists of
\begin{itemize}
  \item a class $\Ob(\mathcal{S})$ of \emph{objects};
  \item for every two objects $U,V\in\Ob(\mathcal{S})$, a set $\Mor(U,V)$ of \emph{morphisms} from $U$ to $V$;
  for $A \in \Mor(U,V)$, we also write $A\colon U \to V$; the objects $U$ and $V$ are referred to as the \emph{source} and \emph{target} of $A$;
  \item for every three objects $U,V,W \in \Ob(\mathcal{S})$, a binary operation $\circ \colon \Mor(V,W) \times \Mor(U,V) \to \Mor(U,W)$ called 
  \emph{composition of morphisms}, which is associative, that is, $A \circ (B \circ C) = (A \circ B) \circ C$ for all $A \colon V \to W$, $B \colon U \to V$, $C \colon T \to U$;
\end{itemize}
A semicategory $\mathcal{S}$ is called \emph{preadditive} if every set $\Mor(U,V)$ is equipped with a binary operation $+$, turning it into an abelian group, such that composition of morphisms is bilinear, that is,
	\begin{align*}
		A \circ (B + C) = (A \circ B) + (A \circ C) \quad \text{ and } \quad (A + B) \circ C = (A \circ C) + (B \circ C).
	\end{align*}
The neutral element of the abelian group $\Mor(U,V)$ is called the \emph{zero morphism} from $U$ to $V$ and denoted by $0_{U,V}$.
\end{definition}

\begin{remark}
In a preadditive semicategory, we have
$(-A) \circ B = A \circ (-B) = -(A\circ B)$ and
$A \circ 0_{U,V} = 0_{U,W} = 0_{V,W} \circ B$
 for all $A \colon V \to W$ and $B \colon U \to V$.
\end{remark}

A semicategory can be thought of as a collection of objects, linked by arrows (the morphisms) that can be composed associatively. 
We note that the words \emph{object} and \emph{morphism} do not imply anything about the nature of these things.
Objects can be anything from numbers and sets to more complex structures like groups, vector spaces, or topological spaces, and similarly for morphisms.
However, while the terminology itself does not prescribe specific types, one can intuitively think of objects as sets and of morphisms as maps between these sets. 

The property of being locally small refers to the fact that the objects $\Ob(\mathcal{S})$ can form a proper class, while, for all objects $U,V \in \Ob(\mathcal{S})$, the morphisms $\Mor(U,V)$ constitute a set.
Preadditive semicategories have the additional property that morphisms with the same source and target can be added, yielding an abelian group structure that is compatible with the composition of morphisms.
For further information, see, for example,~\cite[Sec.~2]{garraway2005sheaves} or ~\cite[App.~B]{tilson}, where semicategories are also called semigroupoids.

Preadditive semicategories provide a natural and very general environment for studying linear operators, prescribing only linearity as a structural constraint. 
In particular, statements about linear operators across different settings can be handled uniformly by considering them as statements about morphisms in preadditive semicategories. 
In the following, we provide several examples of common settings that fall within this approach.

\begin{example}
In the following, $R$ denotes an arbitrary ring (not necessarily with $1$).
\begin{enumerate}
  \item The ring $R$ can be identified as a preadditive semicategory that consists of a single object, 
  say $\mathcal{R}$ for some (arbitrary) symbol $\mathcal{R}$, and morphism set $\Mor(\mathcal{R},\mathcal{R}) = R$.
  The abelian group operation on $\Mor(\mathcal{R},\mathcal{R})$ is given by the addition in $R$ and the composition of morphisms by the multiplication in $R$.
  Thus, in this setting, the morphisms are precisely the ring elements.

  \item The set $\textbf{Mat}(R)$ of matrices with entries in $R$ can be considered as a preadditive semicategory by taking as objects 
 the natural numbers $\N$ and letting $\Mor(n,m) = R^{m \times n}$ for all $n,m \in \N$, equipped with matrix addition as the abelian group operation.
  Composition is given by matrix multiplication.
  Here, morphisms correspond to matrices with entries in $R$.
  
  \item The category $R$-\textbf{Mod} of left modules over $R$ is a preadditive semicategory.
  Here, objects are left $R$-modules and morphisms are module homomorphisms between left $R$\nobreakdash-modules.  
  As a special case, also $K$-\textbf{Vect}, the category of vector spaces over a field $K$ with $K$-linear maps as morphisms, is a preadditive semicategory.
  In both cases, composition of morphisms is given by function composition and the group operation on the morphism sets is given by pointwise addition.
  Note that the objects in these categories form proper classes and not sets.
  
  \item For $K \in \{\mathbb{R}, \mathbb{C}\}$, the category \textbf{Hilb}$_{K}$, whose objects are Hilbert spaces over $K$ and whose morphisms are the bounded $K$-linear maps between these Hilbert spaces, is a preadditive semicategory.
 The composition of morphisms and the abelian group operation are again given by function composition and by pointwise addition, respectively.
    
  \item Another example of a preadditive semicategory is the category \textbf{Ab} of abelian groups, which has all abelian groups as objects and group homomorphisms as morphisms. 
It is equipped with function composition as composition operation and pointwise addition as the group operation on the morphism sets.
  
  \item Since a semicategory is just a category without identity morphisms, every preadditive category is a preadditive semicategory. 
  Thus, so are, in particular, abelian categories.
\end{enumerate}
\end{example}

\subsection{Noncommutative polynomials}
\label{sec:free-algebra}

The central algebraic objects that we consider in this work are noncommutative polynomials in a free algebra.
In this section, we recall their construction and main properties.

For a set $X = \{x_1,\dots,x_k\}$, the free monoid over $X$ is the set $\<X>$ of all finite words over the alphabet $X$, including the empty word $1$, equipped with concatenation of words as its monoid operation.
Furthermore, the free (unitary, associative) algebra $\Z\<X>$ on $X$ over the ring of integers $\Z$ is the set of finite formal sums
\begin{align*}
	\Z\<X> = \left\{ \sum_{w \in \<X>} c_w w \;\middle|\; c_w \in \Z\text{ and }c_w = 0\text{ for almost all }w\right\}
\end{align*}
together with the addition and multiplication
\begin{align*}
	 \sum_{w \in \<X>} c_w w +  \sum_{w \in \<X>} d_w w &= \sum_{w \in \<X>} (c_{w} + d_{w}) w \\
	\left(\sum_{u \in \<X>} c_u u \right) \cdot \left(\sum_{v \in \<X>} d_v v\right) &= \sum_{w \in \<X>} \sum_{uv = w} (c_{u} d_{v}) w.
\end{align*}

\begin{remark}
If $X = \{x\}$ is a singleton, then $\<X> = \{ x^{n} \mid n \in \N\}$ and $\Z\<X> = \Z[x]$ is simply the ring of univariate (commutative) polynomials.
 If $|X| > 1$, then $\Z\<X>$ is a noncommutative ring.
\end{remark}

\begin{remark}
More generally, the free algebra $R\<X>$ could be defined over an arbitrary commutative coefficient ring $R$.
For our application, however, only the case $R = \Z$ will be relevant, and hence, we focus on this special instance.
\end{remark}

We consider the elements in $\Z\<X>$ as \emph{noncommutative polynomials} with coefficients in $\Z$ and monomials in $\<X>$.
Note that the indeterminates in $\Z\<X>$ do not commute with each other, but they still commute with coefficients.

\begin{example}
For $f_1 = xy + x$, $f_2 = xy - 2y \in  \Z\< x,y>$, we have
\begin{align*}
f_1 + f_2 &= 2xy + x - 2y,\\
 f_1f_2 &= (xy + x)(xy - 2y) = xyxy + xxy - 2xyy - 2xy,\\
 f_2f_1 &= (xy-2y)(xy+x) = xyxy + xyx - 2yxy -2yx.
 \end{align*}
 Note that $f_1f_2 \neq f_2f_1$.
\end{example}

For a set of polynomials $F \subseteq \Z\<X>$, we denote by $(F)$ the \emph{(two-sided) ideal} generated by $F$, that is,
\[
	(F) = \left\{ \sum_{i=1}^d a_i f_i b_i \;\middle|\; f_i \in F, a_i,b_i \in \Z\<X>, d \in \N\right\}.
\]
As a special case, we obtain $(\emptyset) = \{0\}$.
An ideal $I \subseteq \Z\<X>$ is said to be \emph{finitely generated} if there exists a finite set $F \subseteq \Z\<X>$ such that $I = (F)$.
We note that $\Z\<X>$ is not Noetherian if $|X| > 1$, meaning that there exist ideals in $\Z\<X>$ which are not finitely generated.

Given (commutative or noncommutative) polynomials $f,f_{1},\dots,f_{r}$, the \emph{ideal membership problem} asks if $f$ belongs to the ideal 
generated by $f_{1},\dots,f_{r}$.
In the setting of commutative polynomials, the theory of Gr\"obner bases allows to decide this problem~\cite{BW93}, using, for example, Buchberger's algorithm~\cite{Buc65}.
We refer to~\cite{AL94,Cox} for textbook expositions and further details.

The theory of Gr\"obner bases also exists for noncommutative polynomials in the free algebra.
A noncommutative version of Buchberger's algorithm in the free algebra was first developed over coefficient fields~\cite{Mor85},
and later generalised to more general coefficient domains, and in particular to $\Z\<X>$~\cite{mikhalev1998standard,LMA20}.
In contrast to the commutative case, however, ideal membership of noncommutative polynomials is only semi-decidable.
In particular, ideals in the free algebra need not admit a finite Gr\"obner basis and the noncommutative Buchberger algorithm is, in fact, only an enumeration procedure.
This is a consequence of the undecidability of the word problem for semigroups~\cite[Thm.~4.5]{Dav58}, which can be reduced to the ideal membership 
problem in $\Z\<X>$, see, for example,~\cite[Rem.~2.2.12]{Xiu12}.

More precisely, \emph{verifying} membership of a polynomial in a finitely generated ideal is always possible.
For example, the noncommutative analogue of Buchberger's algorithm and the concept of polynomial reduction can be used as such a semi\nobreakdash-decision procedure.
We note that there exist implementations of these Gr\"obner bases methods in different computer algebra systems (see, for example,~\cite{Hof20} or~\cite{letterplace} and references therein), which allow to automatically verify ideal membership of noncommutative polynomials.
However, \emph{disproving} ideal membership in  the free algebra is not possible in general.

We summarise the previous discussion in the following proposition.

\begin{proposition}\label{prop ideal membership}
There exists a semi-decision procedure for verifying ideal membership in finitely generated ideals in $\Z\<X>$.
\end{proposition}

 
\section{An informal introduction to the framework}
\label{sec:uniqueness}

In this section, we give an informal and accessible introduction to our framework.
In particular, we illustrate, using the uniqueness of the Moore-Penrose inverse as an example, how identities of linear operators can be translated into noncommutative polynomials and how new identities can be derived from computations with these polynomials.
As mentioned previously, we treat linear operators as morphisms in preadditive semicategories, and thus, in the following, we shall only speak about morphisms. 

The Moore-Penrose inverse, originally described by E.~H.~Moore~\cite{moore1920reciprocal} and later rediscovered by Roger Penrose~\cite{Pen55}, generalises the notion of the inverse of a nonsingular square matrix to all, including rectangular, matrices.
More generally, the Moore-Penrose inverse can be defined for arbitrary morphisms in involutive preadditive semicategories.
A semicategory $\mathcal{S}$ is called \emph{involutive} if it is equipped with a map $\ast$, called \emph{involution}, on the morphisms of $\mathcal{S}$ that sends every $A \colon U \to V$ to some morphism $A^{\ast}\colon V \to U$ and satisfies 
\begin{align}\label{eq:involution}
(A^{\ast})^{\ast} = A &&\text{and}&& (AB)^{\ast} = B^{\ast}A^{\ast}.
\end{align}

\begin{definition}
Let $A\colon U \to V$ be a morphism in an involutive preadditive semicategory $\mathcal{S}$.
A morphism $B \colon V \to U$ is called a \emph{Moore-Penrose inverse} of $A$ if it satisfies the four \emph{Penrose identities}
\begin{align}\label{eq:moore-penrose}
    A B A = A, && B A B = B, && (AB)^* = AB, && (BA)^{*} = B A.
\end{align}
\end{definition}

A straightforward computation involving the Penrose identities, along with their adjoint versions that follow by applying the involution $\ast$ to both sides of an identity, shows that the Moore-Penrose inverse of $A$ is unique, if it exists.
More precisely, if $B$ and $C$ both satisfy~\eqref{eq:moore-penrose}, then
\begin{align}\label{eq:proof-uniqueness}
\begin{aligned}
  B &= BAB = BACAB = BACB^*A^* = BC^*A^*B^*A^* \\
    &= BC^*A^* = BAC = A^*B^*C = A^*C^*A^*B^*C \\
    &= A^*C^*BAC = CABAC = CAC = C.
\end{aligned}
\end{align}

In the following, we describe how the uniqueness of the Moore-Penrose inverse can be proven by a computation with noncommutative polynomials.

Intuitively, an identity $B = C$ of morphisms in a preadditive semicategory can be identified with the noncommutative polynomial $b - c \in \Z\<b,c>$.
More generally, identities of composite morphisms can be translated into noncommutative polynomials by introducing a noncommutative indeterminate for each basic nonzero morphism, and by uniformly replacing each morphism by the respective indeterminate in the difference of the left- and right-hand side of each identity.
Potentially present zero morphisms are replaced by the zero in $\Z\<X>$.

For example, by introducing indeterminates $a, b, a^*, b^*$ representing the morphisms $A,B,A^{\ast}, B^{\ast}$, 
the Penrose identities~\eqref{eq:moore-penrose} can be translated into the following polynomials in $\Z\<a,b,a^{*}, b^{*}>$:
\begin{align}\label{eq:moore-penrose-polynomials}
  &aba - a, && bab - b, && b^* a^* - ab, && a^* b^* - ba.
\end{align}
Note that, here, we used the properties~\eqref{eq:involution} of the involution to simplify the Penrose identities before translating them into polynomials.


The following theorem is a variant of the main result in~\cite{RRH21} and says that, using the translation described above,
a computation like~\eqref{eq:proof-uniqueness} can be inferred from a particular polynomial ideal membership.
In the following, we associate to each identity of morphisms $S = T$ the noncommutative polynomial $s - t$ through the described translation.

\begin{theorem}[{\cite[Thm.~32]{RRH21}}]\label{cor:quasi-identities}
An identity $P = Q$ of morphisms in a preadditive semicategory follows from other identities $S_1 =  T_1,\dots, S_n = T_n$
if 
the noncommutative polynomial $p - q$ lies in the ideal generated by $s_1 - t_1,\dots, s_n - t_n$ in the free algebra $\Z\<X>$.
\end{theorem}


In this work, we will generalise Theorem~\ref{cor:quasi-identities} in several ways, extending its scope first to arbitrary propositional statements about identities of morphisms (Theorem~\ref{thm nullstellensatz}) and later to first-order statements that can additionally involve quantifiers and function symbols. 
Ultimately, we will obtain a semi-decision procedure for verifying first-order statements about identities of morphisms (Procedure~\ref{algo semantic consequence}).


To conclude our introductory example, the theorem stated above suffices.
By Theorem~\ref{cor:quasi-identities}, the uniqueness of the Moore-Penrose inverse can be proven
by showing that the polynomial $b - c$ lies in the ideal generated by the polynomials encoding that $B$ and $C$ satisfy the Penrose identities for $A$, that is,
$b - c \in (F) \subseteq \Z\<a,b,c,a^{*},b^{*},c^{*}>$, where
\begin{align*}
	F = \{ &aba - a, bab - b, b^* a^* - ab, a^* b^* - ba, a^{*} b^{*} a^{*} - a^{*}, b^{*} a^{*} b^{*} - b^{*},\\
	&aca - a, cac - c, c^* a^* - ac, a^* c^* - ca, a^{*} c^{*} a^{*} - a^{*}, c^{*} a^{*} c^{*} - c^{*}\}.
\end{align*}

\begin{remark}\label{remark:involution}
For correctly representing the involution $*$ in the free algebra, we introduce, analogous to~\eqref{eq:moore-penrose-polynomials}, 
a separate indeterminate for the involution of each basic morphism.
Subsequently, we rewrite all expressions using the identities~\eqref{eq:involution} before translating them into polynomials

Moreover, when an identity $S = T$ is valid, the corresponding adjoint identity $S^* = T^*$ also holds, 
necessitating also the translation of these additional identities into polynomials.
Therefore, to express that $B$ is a Moore-Penrose inverse of $A$ with polynomials, 
we have to augment~\eqref{eq:moore-penrose-polynomials} with the polynomials corresponding to the adjoint identities.
Given that the last two Penrose identities are self-adjoint, this yields the two additional elements $a^* b^* a^* - a^*$ and $b^* a^* b^* - b^*$.

We note that this treatment of the involution deviates from how Procedure~\ref{algo semantic consequence} is phrased.
Instead, this corresponds to applying one of the techniques discussed in Section~\ref{sec computational aspects},
which leads to a simpler and more efficient computation.
\end{remark}

The ideal membership of $b-c$ can be verified automatically using dedicated computer algebra software that implements noncommutative Gr\"obner bases methods, 
like our computer algebra package \texttt{operator\_gb}.
Notably, employing such software allows the computation of a representation of the ideal element as a linear combination of the generators, which
serves as a verifiable certificate for the ideal membership.
For our example, we can compute the following representation:
\begin{align}\label{eq:proof-uniqueness-polynomials}
\begin{aligned}
  b - c =\;&-(bab - b) \;-\; b(aca - a)b \;-\; bac(b^* a^* - ab) \;-\; b(c^*a^* - ac)b^*a^* \\
  +\;&bc^*(a^*b^*a^* - a^*) \;+\; b(c^*a^* - ac) \;-\; (a^*b^* - ba)c  \;-\; (a^*c^*a^* - a^*)b^*c\\
 +\;& a^*c^*(a^*b^* - ba)c \;+\; (a^*c^* - ca)bac \;+\; c(aba - a)c \;+\; (cac - c).
\end{aligned}
\end{align}
This verifies the ideal membership of $b-c$ in the ideal generated by the polynomials in the set $F$, and thus, based on Theorem~\ref{cor:quasi-identities}, we arrive at the following conclusion.

\begin{proposition}
The Moore-Penrose inverse of a morphism in an involutive preadditive semicategory is unique, if it exists.
\end{proposition}

By specialising the general setting of preadditive semicategories to more concrete contexts, we immediately obtain the uniqueness statement of the Moore-Penrose inverse, for example, for elements in arbitrary rings with involution (such as $C^*$\nobreakdash-algebras), for complex matrices, or for bounded linear operators on Hilbert spaces.

Thus far, we have treated the notion of an \emph{operator statement} and its correctness rather intuitively. 
However, in order to develop our framework, we require precise definitions of these concepts.
Therefore, in the following section, we use many-sorted first-order logic to specify what we mean by operator statements and what it means for such a statement to be \emph{universally true}.


\section{Modelling operator statements using many-sorted logic}
\label{sec:modelling-operator-statements}

In this section, we adapt the general concept of many-sorted logic to model statements about linear operators and, more generally, about morphisms in preadditive semicategories.
For the reader's convenience, we provide an introduction to many-sorted logic in~\ref{sec:many-sorted-logic}, 
where also definitions of the basic terminology used in the following can be found.

In particular, we use variables and constant symbols to model morphisms (i.e., linear operators), and we exploit the concept of sorts to represent the sources and targets of these morphisms (i.e., the domains and codomains of the operators).
We also define what it means for an operator statement to be \emph{universally true}, and we introduce a special class of operator statements, called \emph{arithmetic operator statements}.
This restricted class allows for a direct translation into algebraic assertions about noncommutative polynomials and will be the foundation for treating arbitrary operator statements. 

\subsection{Operator statements and universal truth}
\label{sec:operator-statements-and-universal-truth}

For defining sorts, we fix a denumerable set $\ObS = \{v_1,v_2,\dots \}$ of \emph{object symbols}, and we consider the pairs $(u,v) \in \ObS \times \ObS$ as sorts,
that is, we specialise $\Sort = \ObS \times \ObS$.
Note that our sorts are pairs of symbols, a deliberate choice necessary for adequately representing both the source and target of the morphisms (i.e., the domain and codomain of the operators) we aim to model.

Furthermore, for each sort $(u,v)$, we identify a distinguished \emph{zero constant symbol} $0_{u,v} \in \Con$, representing a dedicated zero morphism.
We collect them in the set $\Zero = \{ 0_{u,v} \mid u,v \in \ObS\}$.
Similarly, we also distinguish certain special function symbols and consider the set of \emph{arithmetic function symbols} $\Arith = \bigcup_{u,v,w \in \ObS} \{ -_{u,v},\, +_{u,v},\, \cdot_{u,v,w}\} \subseteq \Fun$.
These function symbols shall model the arithmetic operations of additive inversion, addition, and composition of morphisms respectively.

A signature lists and describes the non-logical symbols of a logic that are relevant in a particular context, see Definition~\ref{def:signature-logic} in the appendix.
For our setting, we consider signatures of the form $\Sigma = (\bm{O} \times \bm{O}, \bm{C}, \bm{F}, \sigma)$, where
\begin{enumerate}
	\item $\bm{O} \subseteq \ObS$ is a set of object symbols;
	\item $\bm{C} \subseteq \Con$ such that $0_{u,v} \in C$ for all $u,v \in \bm{O}$;
	\item $\bm{F} \subseteq \Fun$ such that $-_{u,v},\, +_{u,v},\, \cdot_{u,v,w} \in F$ for all $u,v,w \in \bm{O}$;
	\item the sort function $\sigma$ satisfies the following conditions:
	\begin{enumerate}
	\item $\sigma(0_{u,v}) = (u,v)$ for all $0_{u,v} \in \bm{C}$;
	\item $\sigma( -_{u,v}) = (u,v) \to (u,v)$ for all $-_{u,v} \in \bm{F}$;
	\item $\sigma(+_{u,v}) = (u,v) \times (u,v) \to (u,v)$ for all $+_{u,v} \in \bm{F}$;
	\item $\sigma(\cdot_{u,v,w}) = (v,w) \times (u,v) \to (u,w)$ for all $\cdot_{u,v,w} \in \bm{F}$;
	\end{enumerate}
\end{enumerate}

Above, we used the notation $\sigma(f) = s_1 \times \dots \times s_n \to s$ to represent the sort $\sigma(f) = (s_1,\dots,s_n,s)$ of a function symbol $f$,
see also Remark~\ref{remark:function-symbols}.

We give some remarks on how signatures now specialise compared to the general Definition~\ref{def:signature-logic}.
\begin{itemize}
	\item As also noted before Definition~\ref{def:signature-logic}, in our setting, signatures do not contain any predicate symbols. 
	The only predicate that we will work with is equality $\approx$, which will be interpreted as identity.
	\item The conditions on the sets $\bm{C}$ and $\bm{F}$ of relevant constant and function symbols ensure that they contain at least the zero constants
	and the arithmetic function symbols.
	\item The conditions on the sort function $\sigma$ ensure that the dedicated zero constants and the arithmetic function symbols have the sorts one would expect, mirroring the arithmetic operations of morphisms (cf.~Definition~\ref{def:semicategory}).
\end{itemize}

In the following, we assume that all signatures we consider are of the form described above.
Using this special kind of signature, we can now rigorously define \emph{operator statements}.
To this end, we fix a signature $\Sigma = (\bm{O} \times \bm{O},\bm{C},\bm{F},\sigma)$ for the rest of this section.

\begin{definition}
Any formula $\varphi \in \Formulas(\Sigma)$ is called an \emph{operator statement}.
\end{definition}

As a special class of operator statements, we can formulate the \emph{axioms of preadditive semicategories}. 
We denote the set of these axioms by $\Axioms$, and to define it, we introduce the auxiliary sets $\Axioms_{t,u,v,w}$, with $t,u,v,w \in \bm{O}$, containing the sentences listed below.
To make the sorts of the variables used better visible, we write $x^{(u,v)}$ in the prefix if $x$ is a variable of sort $(u,v)$.
Furthermore, to simplify the notation, we use the same symbols $-$, $+$, and~$\cdot$ for all arithmetic function symbols $-_{u,v}$, $+_{u,v}$, and $\cdot_{u,v,w}$.
We also use the usual infix notation for $+$ and $\cdot$, and we assume the natural precedence of these operations.

\begin{equation*}
\Axioms_{t,u,v,w} \;=\; \left\lbrace\qquad\begin{aligned}
    	 &\forall x^{(v,w)}, y^{(u,v)}, z^{(t,u)} &&:&&  x \cdot (y \cdot z) \approx (x \cdot y) \cdot z, \\
	 &\forall x^{(u,v)}, y^{(u,v)}, z^{(u,v)} &&:&& x + (y + z) \approx (x + y) + z, \\
	 &\forall x^{(u,v)} &&:&& x + 0_{u,v} \approx x, \\
	 &\forall x^{(u,v)} &&:&& x + (-x) \approx 0_{u,v}, \\
	 &\forall x^{(u,v)}, y^{(u,v)} &&:&& x + y \approx y + x, \\
	 &\forall x^{(v,w)}, y^{(u,v)}, z^{(u,v)} &&:&&  x \cdot (y + z) \approx x \cdot y + x \cdot z, \qquad \\
	 &\forall x^{(v,w)}, y^{(v,w)}, z^{(u,v)} &&:&&  (x + y) \cdot z \approx x \cdot z + y \cdot z \qquad \\
  \end{aligned}\right\rbrace
\end{equation*}

Note that the sorts of the used arithmetic function symbols are determined implicitly by the sorts of the variables.
For example, adding the sorts of the function symbols explicitly to the last axiom listed above yields
\begin{align*}
		 \forall x^{(v,w)}, y^{(v,w)}, z^{(u,v)} \;:\;  (x +_{v,w} y) \cdot_{u,v,w} z \,\approx\, x \cdot_{u,v,w} z +_{u,w} y \cdot_{u,v,w} z.
\end{align*}

\begin{definition}
Let $\Sigma = (\bm{O} \times \bm{O},\bm{C},\bm{F},\sigma)$ be a signature and $\Axioms_{t,u,v,w}$ be as above.
The set $\Axioms$ of \emph{axioms of preadditive semicategories in signature $\Sigma$} is given by
\[
	\Axioms \coloneqq \bigcup_{t,u,v,w \in \bm{O}} \Axioms_{t,u,v,w}.
\]
\end{definition}

Note that $\Axioms$ is finite if and only if $\bm{O}$ is.
If $\Sigma$ is clear from the context, we refer to $\Axioms$ simply as the set of axioms of preadditive semicategories and omit the dependency on $\Sigma$.
We make some remarks about the nature of models $\mathcal{I} = (A,\assign)$ of the axioms $\Axioms$ in signature $\Sigma = (\bm{O} \times \bm{O},\bm{C},\bm{F},\sigma)$.
For definitions of the logical notions used below, we refer to~\ref{sec:many-sorted-logic}.

First, since all formulas in $\Axioms$ are sentences, that is, they do not contain any free variables, the models of $\Axioms$ do not depend on the variable assignment $\assign$ but only on the structure $A$.
If $\assign$ and $\mathfrak{b}$ are different assignments, then $(A,\assign)$ is a model of $\Axioms$ if and only if $(A,\mathfrak{b})$ is.

Moreover, any structure $A = ((A_{u,v})_{u,v\in \bm{O}}, (c^{A})_{c \in \bm{C}}, (f^{A})_{f \in \bm{F}}) $ that is part of a model $(A,\assign)$ of $\Axioms$ can be considered as a preadditive semicategory $\mathcal{S}$ with objects $\Ob(\mathcal{S}) = \bm{O}$ and morphism sets $\Mor(u,v) = A_{u,v}$.
The arithmetic operations in $\mathcal{S}$, that is, composition of morphisms, addition, and additive inversion, are given by the interpretations of the arithmetic function symbols.
Furthermore, the axioms $\Axioms$ ensure that each set $A_{u,v}$ is indeed an abelian group with neutral element given by the interpretation of the constant symbol $0_{u,v}$ and that the composition of morphisms is bilinear.

Conversely, given any preadditive semicategory $\mathcal{S}$ with $\Ob(\mathcal{S}) \neq \emptyset$, a model $(A,\assign)$ of $\Axioms$ can be constructed as follows:
For the structure $A$, take any mapping of the object symbols $\bm{O}$ to objects in $\mathcal{S}$, say $\pi \colon \bm{O} \to \Ob(\mathcal{S})$, and choose as domains $A_{u,v} = \Mor(\pi(u), \pi(v))$ for all $u,v \in \bm{O}$.
Furthermore, take the zero morphisms $0_{\pi(u),\pi(v)}$ as the interpretations of the zero constant symbols $0_{u,v}$ and the arithmetic operations in $\mathcal{S}$ as the interpretations of the arithmetic function symbols.  
Then, by construction, for any variable assignment $\assign$, the pair $(A,\assign)$ forms a model of $\Axioms$.

We are interested in the first-order theory axiomatised by the set $\Axioms$, that is, all statements that follow from the axioms of preadditive semicategories.
Since the axioms $\Axioms$ only prescribe basic linearity assumptions, we can consider any such statement as a universally true operator statement.
Formally, we arrive at the following definition.

\begin{definition}\label{def:universal-truth}
An operator statement $\varphi \in \Formulas(\Sigma)$ is \emph{universally true} if $\Axioms \models \varphi$.
\end{definition}

Based on G\"odel's completeness theorem (Theorem~\ref{thm gödel}), the universal truth of an operator statement $\varphi$ can be proven by a formal computation.
For instance, using the sequent calculus LK$^=$ presented in~\ref{sec:formal-comp}, this process requires deriving the sequent $\Axioms \vdash \varphi$ from the axioms of the calculus.
However, such formal computations present several drawbacks.
Notably, the linearity axioms $\Axioms$ have to be handled separately, and one has to ensure that all expressions formed during such a formal computation respect the restrictions imposed by the sorts.
In the following sections, we propose an alternative approach, replacing formal computations by computations with noncommutative polynomials, 
which naturally include linearity and, as will be shown, for which restrictions induced by sorts can be neglected.

In Section~\ref{sec:uniqueness}, we have already seen (intuitively) how very elementary operator statements can be treated.
Specifically, the statements addressed by Theorem~\ref{cor:quasi-identities} are restricted to basic implications and fall into a class of operator statements that we call \emph{arithmetic} operator statements.
In the following, we formally define this class.

\subsection{Arithmetic operator statements}
\label{sec:def-arithmetic}

\begin{definition}
An operator statement $\varphi \in \Formulas(\Sigma)$ is called \emph{arithmetic} if it is quantifier-free and all function symbols appearing in $\varphi$ are arithmetic function symbols.
\end{definition}

\begin{example}
Consider a signature $\Sigma = (\bm{O} \times \bm{O},\bm{C},\bm{F},\sigma)$ with $\bm{O} = \{u,v\}$ and non-arithmetic function symbols $f,g \in \bm{F}$ such that 
\[
	\sigma(f) = (u,v) \to (u,v), \qquad \sigma(g) = (v,u) \times (u,v) \to (u,u).
\]
Furthermore, let $x,y \in \Var$ with $\sigma(x) = (u,v)$ and $\sigma(y) = (v,u)$. 

The quantifier-free operator statement $x\cdot y \cdot x + f(x) \approx x \vee y \cdot x \approx g(y,x)$ is not arithmetic as it contains the function symbols $f$ and $g$. 
An example of an arithmetic operator statement would be $x\cdot y\cdot x \approx x \wedge y\cdot x\cdot y \not\approx y$.
\end{example}

All terms appearing in an arithmetic operator statement are essentially polynomial expressions in some basic symbols (the variables and constant symbols).
Thus, it is straightforward to translate terms appearing in such formulas into noncommutative polynomials, simply by 
translating variables and constant symbols into indeterminates and the arithmetic function symbols into the arithmetic operations in $\Z\<X>$.
Note that, since any formula is a finite string of symbols, it is clear that the polynomials obtained in this way consist of finitely many indeterminates.
We describe this translation formally in Section~\ref{sec:terms-to-polynomials}.

In Section~\ref{sec:arithmetic-statements}, we also give with our main result (Theorem~\ref{thm nullstellensatz})
an algebraic characterisation of universal truth of arithmetic operator statements via ideal membership of noncommutative polynomials.
However, before doing this, we first address in the following section how the treatment of arbitrary operator statements can be systematically reduced to dealing with arithmetic operator statements.


\section{Reduction to arithmetic operator statements}
\label{sec:herbrand-ackermann}

We have already anticipated that arithmetic operator statements allow for a natural translation into algebraic statements (see Section~\ref{sec:arithmetic-statements} for further details). 
However, arbitrary operator statements, which, in contrast to arithmetic ones, can involve arbitrary function symbols as well as quantifiers, cannot be handled directly.
Therefore, to be able to also treat formulas involving these concepts, we recall two important notions from first-order logic and automated theorem proving, 
namely Herbrand’s theorem~\cite{Her30,buss1994herbrand} and Ackermann’s reduction~\cite{Ack54}.
By combining these methods, we are able to reduce the treatment of arbitrary operator statements to that of arithmetic operator statements (see Section~\ref{sec:all-operator-statements}).

We note that, in the following, we rather speak of \emph{formulas} than \emph{operator statements} in order to emphasise the fact that the discussed concepts work for arbitrary first-order formulas, and that no special adaptations are needed for our application of proving operator statements. 
Therefore, we also work with general signatures $\Sigma = (\bm{S},\bm{C},\bm{F},\sigma)$ as defined in Definition~\ref{def:signature-logic}, where $\bm{S}$ is an arbitrary set of sorts. 

\subsection{Herbrand's theorem and Herbrandisation}\label{sec:herbrand}

Herbrand's theorem, a fundamental result in logic, provides a systematic way to eliminate quantifiers from first-order formulas, and in this way, 
essentially allows a reduction from first-order logic to propositional logic.
In particular, it says that certain semantic consequences can be verified by a formal computation involving only ground instances of the original formulas.

Before we proceed to state Herbrand's theorem, we first recall some normal forms of formulas as well as the notion of \emph{Herbrand expansion}.
For the following, we fix a signature $\Sigma = (\bm{S},\bm{C},\bm{F},\sigma)$.

A sentence of the form $\varphi = Q_1 x_1 \dots Q_n x_n : \varphi^*$ with $\varphi^*$ quantifier-free and $Q_i \in \{\exists, \forall\}$ is in \emph{prenex normal form} with \emph{prefix} $Q_1 x_1 \dots Q_n x_n$.
We note that any formula can be transformed into a logically equivalent formula in prenex normal form.
If the prefix consists only of existential (resp.~universal) quantifiers, i.e., $Q_i =  \exists$ (resp.~$Q_i = \forall$) for all $i$, then $\varphi$ is in \emph{Herbrand normal form} (resp.~\emph{Skolem normal form}).

The \emph{Herbrand expansion} $H(\varphi)$ of a sentence $\varphi = Q_1 x_1 \dots Q_n x_n \colon \varphi^*$ in prenex normal form is the set of all ground instances of $\varphi$, that is, if $n > 0$ then
\[
	H(\varphi) \coloneqq \left\{ \varphi^*[x_1 \mapsto t_1, \dots, x_n \mapsto t_n] \mid t_i  \in \Ground(\Sigma), \sigma(t_i) = \sigma(x_i) \right\},
\]
and $H(\varphi) \coloneqq \{\varphi \}$ otherwise.

\begin{remark}
The Herbrand expansion depends on the signature $\Sigma$ and, in particular, on the sort function $\sigma$.
Different signatures and sort functions lead to different expansions.
\end{remark}

Note that, for the signatures we consider in our application, the set $H(\varphi)$ is either a singleton (in case $\varphi$ does not contain any variables, i.e., $n = 0$) or infinite.
This follows from the presence of the arithmetic function symbols in all our signatures which can be nested arbitrarily deep.
All formulas in $H(\varphi)$ are ground, meaning that they do not contain any variables. 
We extend the Herbrand expansion also to sets of sentences $\Phi \subseteq \Sent(\Sigma)$ in prenex normal form, by setting 
$H(\Phi) \coloneqq \bigcup_{\varphi \in \Phi} H(\varphi)$.
For any countable set $\Phi$, also $H(\Phi)$ is countable, which follows from the fact that $\Ground(\Sigma)$ is countable.

In the following, we state Herbrand's theorem.
We omit its proof as it can be found in any standard textbook about first-order logic.
See, for example,~\cite[Thm.~XI.1.4]{EFT94} for a formulation in classical unsorted first-order logic together with a corresponding proof for that setting.
We note that the proof in the unsorted case carries over in a straightforward way to the sorted setting.
Additionally,~\cite[Thm.~30]{NDFK10} provides an equivalent formulation of Herbrand's theorem proven for order-sorted logic.
Classical (unordered) many-sorted logic can be considered as a special case of order-sorted logic.

\begin{theorem}[Herbrand's theorem]\label{thm:herbrand}
Let $\Phi \subseteq \Sent(\Sigma)$ be a set of sentences in Skolem normal form and let $\varphi \in \Sent(\Sigma)$ be in Herbrand normal form.
The following are equivalent:
\begin{enumerate}
	\item $\Phi \models \varphi$.\label{item herbrand 1}
	\item There exist $\psi_1,\dots,\psi_k \in H(\Phi)$, $\varphi_1,\dots,\varphi_l \in H(\varphi)$ such that
	$\psi_1 \wedge \dots \wedge \psi_k \vdash \varphi_1 \vee \dots \vee \varphi_l$.\label{item herbrand 2}
\end{enumerate}	 
\end{theorem}


The axioms $\AA$ of preadditive semicategories form a set of sentences in Skolem normal form.
Hence, Theorem~\ref{thm:herbrand} can be used to reduce semantic consequences $\AA \models \varphi$ to syntactic consequences of ground sentences.
However, it might seem that the version of Herbrand's theorem stated above has limited applicability since it requires $\varphi$ to be in Herbrand normal form.
Fortunately, for any set $\Phi \subseteq \Sent(\Sigma)$ of axioms, every formula can be transformed into a $\Phi$-equivalid sentence in Herbrand normal form.
Recall that two formulas $\varphi$ and $\psi$ are called $\Phi$-equivalid if $\Phi \models \varphi$ if and only if $\Phi \models \psi$.

Starting from an arbitrary formula $\varphi$, first, any variables not in the scope of a quantifier are bound by a universal quantifier, that is,
if $\bm{x} = x_1,\dots,x_n$ are not in the scope of a quantifier, then $\varphi$ is transformed into the sentence $\forall \bm{x} : \varphi$.
The latter formula is called the \emph{universal closure} of $\varphi$.
By the semantics of the universal quantifier, $\varphi$ and $\forall \bm{x} : \varphi$ are $\Phi$-equivalid.

Next, $\forall \bm{x}:\varphi$ is transformed into prenex normal form.
After this process, the resulting sentence is almost in Herbrand normal form. 
However, the prefix of a formula in prenex normal form can still contain universal quantifiers.
The goal of \emph{Herbrandisation} is to remove these universal quantifiers in such a way that validity is preserved.
More precisely, given a sentence in prenex normal form, applying the following rules exhaustively is called \emph{Herbrandisation}.
\begin{align*}
	\forall y : \psi \quad &\rightsquigarrow \quad \psi[y \mapsto c] \\
	\exists x_1,\dots,x_n \forall y : \psi \quad &\rightsquigarrow \quad \exists x_1,\dots,x_n : \psi[y \mapsto f(x_1,\dots,x_n)]
\end{align*}
In these rules, $c$ is a new constant symbol with sort $\sigma(c) = \sigma(y)$ and $f$ is a new function symbol with sort $\sigma(f) = \sigma(x_1) \times \dots \times \sigma(x_n) \to \sigma(y)$.
Note that, whenever a Herbrandisation rule is applied, the signature $\Sigma = (\bm{S},\bm{C},\bm{F},\sigma)$ has to be extended, yielding a new signature.
In case of applying the first rule, $c$ has to be added to $\bm{C}$, and for the second rule, $f$ has to be added to $\bm{F}$.
In both cases, $\sigma$ has to be extended as well.

It is clear that Herbrandisation transforms well-formed sentences $\varphi$ into well-formed sentences in Herbrand normal form.
Additionally, it can be shown that the result of this transformation is also $\Phi$-equivalid to $\varphi$.
We refer to~\cite[Ch.~11]{Wa82}, where the dual concept of \emph{Skolemisation} is discussed, for further details.

Combining the steps described above, we obtain the following result.

\begin{proposition}\label{prop herbrandisation}
Let $\Phi \subseteq \Sent(\Sigma)$ be a set of sentences.
Every formula $\varphi \in \Formulas(\Sigma)$ can be transformed into a $\Phi$-equivalid sentence in Herbrand normal form.
\end{proposition}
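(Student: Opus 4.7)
The plan is to verify that each of the three transformations outlined immediately before the statement -- universal closure of free variables, conversion to prenex normal form, and exhaustive Herbrandisation -- preserves $\Phi$-equivalence, and that their composition terminates at a sentence in Herbrand normal form. The signature may grow along the way (Herbrandisation introduces fresh constants and function symbols), but this is permitted by the statement as written.

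First, if $x_1,\dots,x_n$ are the variables of $\varphi$ not in the scope of any quantifier, I replace $\varphi$ by $\varphi' \coloneqq \forall x_1,\dots,x_n \colon \varphi$. A straightforward induction using the semantic clause for $\forall$ (and the fact that interpretations range over all assignments) shows that $\varphi$ and $\varphi'$ have exactly the same models, so in particular $\Phi \models \varphi \iff \Phi \models \varphi'$. Next, I push quantifiers outward using the standard equivalences, renaming bound variables where necessary to avoid capture, to obtain a logically equivalent sentence $\varphi''$ in prenex normal form; logical equivalence trivially implies $\Phi$-equivalence. Both of these steps are routine.

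The heart of the argument is the third step. I process the prefix of $\varphi''$ from the outside in, applying the two Herbrandisation rules. Each single application preserves $\Phi$-equivalence: consider the first rule $\forall y \colon \psi \rightsquigarrow \psi[y \mapsto c]$, where $c$ is a fresh constant of sort $\sigma(y)$, and call the two formulas $\chi_1$ and $\chi_2$. For the direction $\Phi \models \chi_1 \Rightarrow \Phi \models \chi_2$, any model $(A',M)$ of $\Phi$ in the extended signature restricts to a model of $\Phi$ in the old signature, hence satisfies $\chi_1$, so $\psi$ holds when $y$ is assigned $c^{A'}$, i.e.\ $\chi_2$ holds. For the converse, I argue contrapositively: pick a model $\II = (A,M)$ of $\Phi$ and some $a \in A_{\sigma(y)}$ with $\II[y \mapsto a](\psi) = \bot$, then extend $A$ to the new signature by $c^{A'} \coloneqq a$. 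Since $c$ does not occur in any formula of $\Phi$, the extended structure still models $\Phi$, whereas $(A',M)(\chi_2) = \II[y \mapsto a](\psi) = \bot$.

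The second rule $\exists x_1,\dots,x_n \forall y \colon \psi \rightsquigarrow \exists x_1,\dots,x_n \colon \psi[y \mapsto f(x_1,\dots,x_n)]$ is analogous, and this is where the main technical subtlety lies: in the contrapositive direction I need, given a counter-model $\II$ of $\chi_1$, to define the interpretation $f^{A'}$ so that for every tuple $(a_1,\dots,a_n)$ it outputs a witness $a \in A_{\sigma(y)}$ for which $\psi$ fails under the appropriate assignment. Selecting such a witness uniformly across all tuples requires the axiom of choice. Once $f^{A'}$ is so defined, the same argument as above produces a model of $\Phi$ falsifying $\chi_2$. Finally, termination of Herbrandisation is clear: each rewrite strictly decreases the number of universal quantifiers in the prefix, and the end product contains only existential quantifiers, so it is in Herbrand normal form, as required.
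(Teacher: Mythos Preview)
Your argument is correct and follows exactly the three-step route the paper sketches immediately before the proposition---universal closure, prenexing, then exhaustive Herbrandisation---supplying for the last step the details that the paper delegates to a reference on the dual Skolemisation construction. One minor slip: $\varphi$ and its universal closure $\varphi'$ do \emph{not} in general have the same models when $\varphi$ has free variables; the correct claim (which your parenthetical about assignments already captures) is only that $\Phi \models \varphi \iff \Phi \models \varphi'$, and this holds because $\Phi$ consists of sentences, so whether an interpretation models $\Phi$ is insensitive to the assignment.
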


In particular, every operator statement $\varphi$ can be transformed into a sentence $\varphi'$ in Herbrand normal form such that $\varphi$ is universally true if and only if $\varphi'$ is.
Then, using Herbrand's theorem, the treatment of $\varphi'$ can be reduced to that of finitely many ground instances of $\varphi'$, eliminating all quantifiers in this way.

\subsection{Ackermann's reduction}\label{sec:ackermann}

When translating operator statements into polynomial statements, it is natural to interpret the arithmetic function symbols as the arithmetic operations on polynomials.
However, other function symbols -- such as those introduced by Herbrandisation -- cannot be translated directly.
To treat such function symbols, we use a technique from the theory of uninterpreted functions with equality, called \emph{Ackermann's reduction}.

The goal of Ackermann's reduction is to remove function symbols from a formula in such a way that validity is preserved.
The reduction requires replacing function symbols with new constants and adding a set of constraints to enforce \emph{functional consistency}, 
which encodes that two instances of the same function symbol are equal if instantiated with equal arguments.

Ackermann's reduction is described formally in Algorithm~\ref{algo ackermann}.
We also refer to~\cite[Sec.~3.3.1]{KS16} for a textbook exposition of this procedure in the unsorted case and note that the addition of sorts is straightforward.

Algorithm~\ref{algo ackermann} is presented for the elimination of a single function symbol and follows closely~\cite[Alg.~3.3.1]{KS16}.
By repeated application, all unwanted function symbols can be removed from a given formula.
Note that this reduction introduces new constant symbols, which -- analogous to the Herbrandisation -- requires to extend the signature.
In the following, we fix a signature $\Sigma = (\bm{S},\bm{C},\bm{F},\sigma)$.

\begin{algorithm}
\renewcommand{\algorithmicrequire}{\textbf{Input:}}
\renewcommand{\algorithmicensure}{\textbf{Output:}}
\caption{Ackermann's reduction}\label{algo ackermann}
\begin{algorithmic}[1]
\Require{Quantifier-free formula $\varphi$ with $m$ different instances of function symbol $f$ of sort $\sigma(f) = s_{1} \times \dots \times s_{n} \to s$}
\Ensure{Quantifier-free formula $\varphi^{\text{Ack}(f)}$ without instances of $f$ s.t.\ $\varphi$ is valid iff $\varphi^{\text{Ack}(f)}$ is.}

\State{Assign indices to the $m$ instances of $f$ from subexpressions outwards.
Denote by $f_i$ the instance of $f$ with index $i$ and by $t^{(i)}_1,\dots,t^{(i)}_{n}$ the arguments of $f_i$.}
\State{Let $\acker$ be the function that maps every instance $f_i(t^{(i)}_1,\dots,t^{(i)}_{n})$ 
to a new constant symbol $c_i$ with sort $\sigma(c_i) = s$ and leaves all other terms unchanged.
In case of nested functions, the function $\acker$ is only applied to the outermost instance.}
\State{$\varphi^\text{flat} \leftarrow \acker(\varphi)$}
\State{Let $\varphi^\text{FC}$ denote the following formula
\[
	\bigwedge_{i=1}^{m-1} \bigwedge_{j=i+1}^m \left(\acker(t^{(i)}_1) \approx \acker(t^{(j)}_1) \wedge \dots \wedge \acker(t^{(i)}_n) \approx \acker(t^{(j)}_n) \right) \rightarrow c_i \approx c_j
\] 
}
\State{$\varphi^{\text{Ack}(f)} \leftarrow \left(\varphi^\text{FC} \rightarrow \varphi^\text{flat}\right)$}

\State{\textbf{return } $\varphi^{\text{Ack}(f)}$}
\end{algorithmic}
\end{algorithm}

\begin{proposition}\label{prop ackermann}
Algorithm~\ref{algo ackermann} is correct.
\end{proposition}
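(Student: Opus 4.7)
The plan is to prove the equivalence ``$\varphi$ is valid iff $\varphi^\text{Ack}$ is valid'' by converting, in each direction, an interpretation of the relevant signature into an interpretation of the other so that $\varphi$ and $\varphi^\text{flat}$ receive the same truth value. Recall that $\varphi$ lives in the original signature, which contains $f$ but not the fresh constants $c_i$, whereas $\varphi^\text{flat}$, $\varphi^\text{FC}$, and $\varphi^\text{Ack}$ live in the signature enlarged by $c_1,\dots,c_m$ and in which $f$ is no longer needed.

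For $\varphi$ valid $\Rightarrow$ $\varphi^\text{Ack}$ valid, I would fix an arbitrary interpretation $\II$ of the enlarged signature with $\II(\varphi^\text{FC}) = \top$, and lift it to an interpretation $\II'$ of the original signature by keeping $\II$ on all shared symbols and defining
\[
f^{\II'}\bigl(\II(\ack(t^{(i)}_1)),\dots,\II(\ack(t^{(i)}_n))\bigr) = \II(c_i)
\]
for $i = 1,\dots,m$, extended arbitrarily on other argument tuples. Satisfaction of $\varphi^\text{FC}$ is precisely what makes this assignment well-defined, since whenever two indices $i \ne j$ give the same argument tuple, the implication in $\varphi^\text{FC}$ forces $\II(c_i) = \II(c_j)$. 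A structural induction on subterms $s$ of $\varphi$ then yields $\II'(s) = \II(\ack(s))$, hence $\II'(\varphi) = \II(\varphi^\text{flat})$; validity of $\varphi$ now forces $\II(\varphi^\text{flat}) = \top$, i.e.\ $\II(\varphi^\text{Ack}) = \top$.

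For the converse, I would fix an arbitrary interpretation $\II'$ of the original signature and extend it to the enlarged signature by setting $\II(c_i) \coloneqq f^{\II'}(\II'(t^{(i)}_1),\dots,\II'(t^{(i)}_n))$. The analogous structural induction gives $\II(\ack(s)) = \II'(s)$ for every subterm $s$ of $\varphi$, so $\II(\varphi^\text{flat}) = \II'(\varphi)$. Moreover $\II(\varphi^\text{FC}) = \top$ holds automatically: whenever $\II(\ack(t^{(i)}_k)) = \II(\ack(t^{(j)}_k))$ for all $k$, the induction promotes this to $\II'(t^{(i)}_k) = \II'(t^{(j)}_k)$, and since $f^{\II'}$ is an honest function we get $\II(c_i) = \II(c_j)$. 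Validity of $\varphi^\text{Ack}$ then gives $\II(\varphi^\text{flat}) = \top$, i.e.\ $\II'(\varphi) = \top$.

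The one real subtlety I anticipate is the structural induction $\II'(s) = \II(\ack(s))$ in the presence of nested occurrences of $f$, since the algorithm description says that for such nested instances only the outer constant survives in $\varphi^\text{flat}$. The clean way to handle this is to view $\ack$ as the homomorphic extension of the rewrite rule $f_i(\cdot) \mapsto c_i$ applied outside-in; the inductive step at an instance $f_i$ then reads $\II'(f_i(t^{(i)}_1,\dots,t^{(i)}_n)) = f^{\II'}(\II'(t^{(i)}_1),\dots,\II'(t^{(i)}_n)) = f^{\II'}(\II(\ack(t^{(i)}_1)),\dots,\II(\ack(t^{(i)}_n))) = \II(c_i)$, which matches $\II(\ack(f_i(\dots)))$ by definition of $\ack$, so that inner occurrences of $f$ are absorbed automatically into the outer constants.
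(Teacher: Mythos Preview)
Your proof is correct and follows essentially the same interpretation-conversion argument the paper uses. The paper organises things slightly differently: it states the validity equivalence as an immediate special case (with $\Phi=\emptyset$) of the more general Proposition~\ref{prop ackermann cons}, whose proof is phrased contrapositively (``$\Phi\not\models\varphi$ iff $\Phi\not\models\varphi^{\text{Ack}}$'') but uses exactly your two constructions---interpreting each $c_i$ as the value of the corresponding $f$-instance in one direction, and redefining $f$ on the relevant argument tuples using the $c_i$ (with $\varphi^{\text{FC}}$ ensuring well-definedness) in the other. Your write-up is in fact more explicit than the paper's about the structural induction $\II'(s)=\II(\ack(s))$ and the handling of nested $f$-instances; the paper simply asserts $\II'(\varphi^{\text{flat}})=\II(\varphi)$ without spelling this out.
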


\begin{proof}
The resulting formula is clearly a quantifier-free formula without instances of $f$.
It remains to show that $\varphi$ is valid if and only if $\varphi^{\text{Ack}(f)}$ is, but this follows from the more general Proposition~\ref{prop ackermann cons} below by setting $\Phi = \emptyset$.
\end{proof}

\begin{example}
Consider the formula
\[
	\varphi = (x \not\approx y) \wedge f(x) \approx f(z) \wedge (x \not\approx f(x) \vee f(z) \not\approx f(f(y))).
\]
with variables $x,y,z$ of sort $s \in \bm{S}$ and function symbol $f$ with $\sigma(f) = s \to s$.
We apply Algorithm~\ref{algo ackermann} to remove $f$ from $\varphi$.

After assigning indices to the instances of $f$ (say, from left to right and, as required, inside-out),
we compute $\varphi^\text{flat}$ and $\varphi^\text{FC}$ accordingly:
\begin{align*}
	\varphi^\text{flat} =\; &(x \not\approx y) \wedge c_1 \approx c_2 \wedge (x \not\approx c_1 \vee c_2 \not\approx c_4),\\
	\varphi^\text{FC} =\; &(x \approx z \rightarrow c_1 \approx c_2) \;\wedge\; 
					(x \approx y \rightarrow c_1 \approx c_3) \\
			 \wedge\; &(x \approx c_{3} \rightarrow c_1 \approx c_4) \; \wedge\; 
			 		(z \approx y \rightarrow c_2 \approx c_3) \\
			    \wedge\; &(z \approx c_{3} \rightarrow c_2 \approx c_4) \; \wedge\; 
			    		(y \approx c_{3} \rightarrow c_3 \approx c_4),
\end{align*}
where $c_1,c_2,c_3,c_{4}$ are new constants of sort $s$ replacing $f(x), f(z), f(y), f(f(y))$ respectively.
Then $\varphi$ is valid if and only if $\varphi^{\text{Ack}(f)} = \varphi^\text{FC} \rightarrow \varphi^\text{flat}$ is, and in fact, one can show that $\varphi^{\text{Ack}(f)}$ is not valid, implying that neither is $\varphi$.
\end{example}

In our application, we are interested in semantic consequences $\Phi \models \varphi$.
The following result asserts that Ackermann's reduction also preserves this property for quantifier-free formulas $\varphi$
if the removed function symbol does not appear in $\Phi$.

\begin{proposition}\label{prop ackermann cons}
Let $\varphi \in \Formulas(\Sigma)$ be a quantifier-free formula containing a function symbol $f$.
Let $\Phi \subseteq \Formulas(\Sigma)$ be a set of formulas not containing $f$.
Then $\varphi$ is $\Phi$\nobreakdash-equivalid to $\varphi^{\text{Ack}(f)}$. 
\end{proposition}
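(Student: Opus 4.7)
The plan is to establish both directions of the $\Phi$-equivalence by constructing, for each given model of $\Phi$, a suitable modified interpretation that witnesses the conclusion. The central observation is that $\varphi^\text{FC}$ encodes exactly the functional consistency needed to regard the fresh constants $c_i$ as the values of a single function on the argument tuples: whenever two tuples $(t^{(i)}_1,\dots,t^{(i)}_n)$ and $(t^{(j)}_1,\dots,t^{(j)}_n)$ evaluate to the same element, the corresponding replacements $c_i$ and $c_j$ must also coincide.

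The technical backbone I would isolate is a structural lemma stating: if $\II$ is an interpretation of the original signature and $\II'$ extends $\II$ to the enlarged signature (possibly redefining $f^\II$) in such a way that $c_i^{\II'} = f^{\II}(\II(t^{(i)}_1),\dots,\II(t^{(i)}_n))$, then for every subterm $t$ occurring in $\varphi$ one has $\II'(\ack(t)) = \II(t)$. I would prove this by induction on the term structure, treating the non-$f$-headed case directly (since $\ack$ leaves such terms unchanged) and in the $f$-headed case, $t = f_i(s_1,\dots,s_n)$, using $\ack(t) = c_i$ together with the inductive hypothesis on the $s_k$. Propagating this pointwise identification to formulas yields $\II'(\ack(\varphi)) = \II(\varphi)$, i.e., $\II'(\varphi^\text{flat}) = \II(\varphi)$.

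For the direction $\Phi \models \varphi \Rightarrow \Phi \models \varphi^\text{Ack}$, I would take a model $\II$ of $\Phi$ with $\II \models \varphi^\text{FC}$, and build $\II'$ by modifying $f^\II$ so that $f^{\II'}$ sends each tuple $(\II(\ack(t^{(i)}_1)),\dots,\II(\ack(t^{(i)}_n)))$ to $c_i^\II$ and agrees with $f^\II$ elsewhere. Well-definedness is guaranteed by $\varphi^\text{FC}$, and $\II'$ still models $\Phi$ since $\Phi$ does not mention $f$; the structural lemma then converts $\II' \models \varphi$ into $\II \models \varphi^\text{flat}$. For the converse, given $\II \models \Phi$ I would define $\II'$ on the enlarged signature by $c_i^{\II'} := f^{\II}(\II(t^{(i)}_1),\dots,\II(t^{(i)}_n))$, which automatically satisfies $\varphi^\text{FC}$ and still models $\Phi$ (the $c_i$ are fresh), so $\Phi \models \varphi^\text{Ack}$ yields $\II' \models \varphi^\text{flat}$, and the structural lemma transfers this back to $\II \models \varphi$.

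The main obstacle I expect is the careful bookkeeping around nested instances of $f$: since $\ack$ replaces only the outermost occurrence of each $f$-subterm and leaves its arguments syntactically intact, the recursive descent in the term induction must track how the $\ack$-constants $c_j$ produced by inner occurrences interact with the semantic evaluation of outer occurrences. This is also the point where the hypothesis that $f \notin \Phi$ is used crucially, since it is what allows the redefinition of $f^\II$ in the first direction without disturbing the models of $\Phi$.
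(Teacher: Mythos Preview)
Your proposal is correct and follows essentially the same approach as the paper. The paper argues by contraposition, constructing countermodels to pass from $\Phi \not\models \varphi$ to $\Phi \not\models \varphi^{\text{Ack}}$ and back, using exactly the two interpretation modifications you describe (setting $c_i^{\II'} := f^{\II}(\II(t^{(i)}_1),\dots,\II(t^{(i)}_n))$ in one direction, and redefining $f$ via $\varphi^{\text{FC}}$ in the other); your explicit structural lemma $\II'(\ack(t)) = \II(t)$ is precisely what the paper invokes implicitly when it asserts $\II'(\varphi^{\text{flat}}) = \II(\varphi)$ without further justification. One small point: the hypothesis of your structural lemma, as literally phrased, fits Direction~2 directly but in Direction~1 the roles of $\II$ and $\II'$ are swapped, so the condition you need there is $c_i^{\II} = f^{\II'}(\II'(t^{(i)}_1),\dots,\II'(t^{(i)}_n))$ rather than the stated one; the induction you outline goes through in both cases, but you should state the lemma symmetrically or prove the swapped variant separately.
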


\begin{proof}
We prove $\Phi \not\models \varphi$ if and only if $\Phi \not \models \varphi^{\text{Ack}(f)}$.
To this end, we consider all interpretations within the signature that contains both $f$ and the new constants $c_i$.
We also note that, as mentioned in Remark~\ref{remark:equality}, the equality predicate $\approx$ is always interpreted as identity,
which means that all our interpretations satisfy the axioms of equality (in particular, the congruence axioms).

For one implication, assume that $\Phi \not\models \varphi$.
Let $\II = (A,\assign)$ be a model of $\Phi$ such that $\II(\varphi) = \bot$.
Consider the interpretation $\II' = (A',\assign)$, where $A'$ is obtained from $A$ by setting $c_i^A = \II(f_i(t_1^{(i)},\dots,t^{(i)}_{n}))$, for all $i$, and leaving everything else unchanged.
Since $c_i$ are newly introduced constant symbols, they do not appear in $\Phi$.
As the interpretations of all other terms remain unchanged, $\II$ and $\II'$ agree on all terms in $\Phi$, and therefore, $\II'(\Phi) = \II(\Phi) = \top$. 
Furthermore, $\II'(\varphi^\text{flat}) = \II(\varphi) = \bot$ and $\II'(\varphi^\text{FC}) = \top$. 
Consequently $\II(\varphi^{\text{Ack}(f)}) = \bot$, showing that $\varphi^{\text{Ack}(f)}$ is not a semantic consequence of $\Phi$.

For the other implication, assume that $\Phi \not\models\varphi^{\text{Ack}(f)}$.
Let $\II' = (A',\assign')$ be a model of $\Phi$ such that $\II'(\varphi^{\text{Ack}(f)}) = \bot$.
Then, in particular, $\II'(\varphi^\text{FC}) = \top$ and $\II'(\varphi^\text{flat}) = \bot$.
Now, consider the interpretation $\II = (A,\assign')$, where $A$ is obtained from $A'$ by replacing the function $f^{A'}$ by the function
$f^A$ that agrees with $f^{A'}$ except for $f^A( \II'(\tau_1^{(i)}), \dots,  \II'(\tau_n^{(i)})) = \II'(c_i)$, for all $i$, where $\tau_k^{(i)} = \acker(t_k^{(i)})$.
Note that this function is well-defined since $\II'$ is a model of $\varphi^\text{FC}$.
Since the symbol $f$ does not appear in $\Phi$, the interpretations of all terms in $\Phi$ remain unchanged, and thus, $\II(\Phi) = \II'(\Phi) = \top$.
Furthermore, $\II(\varphi) = \II'(\varphi^\text{flat}) = \bot$, and consequently $\Phi \not\models\varphi$.
\end{proof}

Since the axioms $\AA$ of preadditive semicategories only contain arithmetic function symbols,
Proposition~\ref{prop ackermann cons} implies that Ackermann's reduction can be used to remove all non-arithmetic function symbols from quantifier-free operator statements.
Combining this with Herbrand's theorem and Herbrandisation to eliminate quantifiers, we now have a systematic way of transforming any operator statement $\varphi$ into an arithmetic operator statement $\varphi'$ such that $\varphi$ is universally true if and only if $\varphi'$ is.
In Section~\ref{sec:all-operator-statements}, we describe this reduction formally, and show how it gives rise to a semi-decision procedure (Procedure~\ref{algo semantic consequence}) for verifying universal truth of arbitrary operator statements based on algebraic computations. 

Procedure~\ref{algo semantic consequence} is based on the fact that we can characterise universal truth of arithmetic operator statements 
algebraically via ideal membership of noncommutative polynomials.
In the following section, we establish this characterisation.


\section{An algebraic approach to universal truth of arithmetic operator statements}
\label{sec:arithmetic-statements}

In this section, we characterise universal truth of arithmetic operator statements by polynomial ideal membership.
To this end, we introduce the notion of \emph{idealisation} (Section~\ref{sec:idealisation}) and use this concept to state and prove our main result (Theorem~\ref{thm nullstellensatz}).
Before doing this, we first formally describe the translation of terms and literals in arithmetic operator statements into noncommutative polynomials.

For the remainder of this section, we fix a signature $\Sigma = (\bm{O} \times \bm{O},\bm{C},\bm{F},\sigma)$.

\subsection{Translation into polynomials}
\label{sec:terms-to-polynomials}

Let $\varphi \in \Formulas(\Sigma)$ be an arithmetic operator statement.
We associate to every variable and nonzero constant symbol appearing in $\varphi$ an indeterminate, that is, we consider the set 
\[
	X_\varphi = \left\{ x \in \Var \cup (\bm{C} \setminus \Zero) \mid x \text{ appears in }\varphi \right\}
\]
of indeterminates.
Note that we do not assign indeterminates to the distinguished zero constants $0_{u,v}$.
With this, we define a translation function $T_\varphi$, that maps every term in $\varphi$ to a noncommutative polynomial in $\Z\<X_\varphi>$, as follows:
\begin{alignat*}{3}
	&T_\varphi(0_{u,v}) &&=\quad &&0 \\
	&T_\varphi(x) &&= &&x \quad\text{ if } x \in \Var \cup (\bm{C} \setminus \Zero) \\
	&T_\varphi(-t) &&= &&- T_\varphi(t) \\
	&T_\varphi(s + t) &&= &&T_\varphi(s) + T_\varphi(t) \\
	&T_\varphi(s \cdot t) &&= &&T_\varphi(s) \cdot T_\varphi(t)
\end{alignat*}


In order to simplify the notation, we identify a term $t$ appearing in $\varphi$ with its image $T_\varphi(t)$ under the translation map.
Furthermore, if $s \approx t$ or $s \not\approx t$ is a literal in $\varphi$, we write $s - t$ for the polynomial $T_\varphi(s) - T_\varphi(t)$.

\begin{example}\label{ex:idealisation}
Consider a signature $\Sigma = (\bm{O} \times \bm{O},\bm{C},\bm{F},\sigma)$ with $\bm{O} = \{u,v\}$ and sort function $\sigma$ such that
\[
	\sigma(a) = (u,v), \qquad \sigma(b) = (v,u), \qquad \sigma(c) = (u,v),  \qquad \sigma(d) = (v,v),
\]
where $a,b,c,d \in \Var$.

Let $\varphi \in \Formulas(\Sigma)$ be the following arithmetic operator statement:
\[
	\varphi = \left(
	a\cdot b\cdot a \approx 0_{u,v} + c \wedge
	d\cdot a \not\approx c
	\right)
	\rightarrow
	\left(
	a \cdot b - d \not\approx 0_{v,v}
	 \wedge 
	a \cdot b\cdot c \approx c \cdot b\cdot a
	\right).
\]

Using the indeterminates $X_{\varphi} = \{a,b,c,d\}$, the four literals appearing in $\varphi$ are translated into the following polynomials in $\Z\<a,b,c,d>$:
\begin{align*}
	aba - c, && da - c, && ab -d, && abc - cba.
\end{align*}
\end{example}

Note that, thus far, positive and negative literals are handled equivalently.
Both $s \approx t$ and $s \not\approx t$ are translated into the same polynomial $s - t$.
A distinction between positive and negative literals will become apparent in the next section.

\subsection{Idealisation}
\label{sec:idealisation}

Idealisation is a process that allows to characterise universal truth of arithmetic operator statements via ideal membership in a free algebra $\Z\<X>$.
In what follows, we consider $\wedge$ and $\vee$ as associative and commutative operations, that is, 
$\varphi \wedge \psi = \psi \wedge \varphi$ and $\varphi \wedge (\psi \wedge \rho) = (\varphi \wedge \psi) \wedge \rho$, and analogously for $\vee$.
We recall that every quantifier-free formula $\varphi$ can be transformed into a logically equivalent formula of the form
\begin{equation}\label{eq syntax}
	\bigwedge_{i=1}^{m} \left( \bigvee_{j=1}^{n_i} s_{i,j} \not\approx t_{i,j} \vee \bigvee_{k=1}^{n'_i} p_{i,k} \approx q_{i,k} \right),
\end{equation}
with terms $s_{i,j}, t_{i,j}, p_{i,k}, q_{i,k}$.
In the above formula, either of the two disjunctions can also be empty, that is, it is possible that either $n_i = 0$ or $n_i' = 0$, but not both.

We recall that a formula of the form~\eqref{eq syntax} is said to be in \emph{conjunctive normal form (CNF)}.
It is a conjunction of \emph{clauses}, where a clause is a disjunction of literals.
A formula can have several CNFs.

One way to obtain a CNF of a quantifier-free formula $\varphi$ is to apply to~$\varphi$ exhaustively each of the following sets of rewrite rules in the given order, see also, e.g.,~\cite[Sec.~7.5.2]{russell2010artificial}:
\begin{enumerate}
	\item Eliminate implications:
	\begin{equation}\label{eq:rewrite-1}
		\psi_1 \rightarrow \psi_2 \;\rightsquigarrow\; \lnot \psi_1 \vee \psi_2
	\end{equation}
	\item Move $\lnot$ inwards (i.e., compute a negation normal form):
	\begin{equation}\label{eq:rewrite-2}
		\lnot \lnot \psi  \;\rightsquigarrow\; \psi,  \qquad
		\lnot (\psi_1 \wedge \psi_2) \;\rightsquigarrow\; \lnot \psi_1 \vee \lnot \psi_2, \qquad
	\lnot (\psi_1 \vee \psi_2) \;\rightsquigarrow\; \lnot \psi_1 \wedge \lnot \psi_2
	\end{equation}
	\item Distribute $\vee$ over $\wedge$:
	\begin{equation}\label{eq:rewrite-3}
		\psi \vee (\psi_1 \wedge \psi_2) \;\rightsquigarrow\; (\psi \vee \psi_1) \wedge (\psi \vee \psi_2)
	\end{equation}
\end{enumerate}

We note that the above rules apply modulo associativity and commutativity of $\wedge, \vee$.
This process yields a unique result, which is a CNF of $\varphi$.
We denote it by $\CNF(\varphi)$.
Also note that this transformation preserves the semantics of $\varphi$.

\begin{lemma}\label{lemma cnf}
Any quantifier-free formula $\varphi$ is logically equivalent to $\CNF(\varphi)$.
\end{lemma}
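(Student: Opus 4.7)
The plan is to verify, for each of the three rewrite systems in (\ref{eq:rewrite-1})--(\ref{eq:rewrite-3}), (i) that every rule replaces a subformula by a logically equivalent one, and (ii) that exhaustive application terminates, so that $\CNF(\varphi)$ is well-defined. Since logical equivalence is a congruence with respect to $\lnot,\vee,\wedge,\rightarrow$ (shown by a short structural induction on the formula using the semantic clauses from Section~\ref{sec:logic}), a sequence of equivalence-preserving replacements of subformulas yields an equivalent formula, giving $\varphi\equiv\CNF(\varphi)$.

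For (i), each rule is a standard propositional tautology, verifiable by comparing truth values under an arbitrary interpretation $\II$: rule (\ref{eq:rewrite-1}) is the classical definition of implication, the rules of (\ref{eq:rewrite-2}) are double negation elimination and the two de Morgan laws, and (\ref{eq:rewrite-3}) is distributivity of $\vee$ over $\wedge$. These checks are entirely routine.

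For (ii), termination of each phase is classical. Each application of (\ref{eq:rewrite-1}) strictly reduces the number of occurrences of $\rightarrow$. The rules (\ref{eq:rewrite-2}) either delete a $\lnot\lnot$ or push a $\lnot$ past a binary connective, which strictly decreases the sum over all $\lnot$-nodes of the size of their scope. Phase (\ref{eq:rewrite-3}) is the most delicate, since the formula grows in size. Here I would use the polynomial measure $\mu$ defined by $\mu(\ell)=2$ on literals, $\mu(\lnot\psi)=\mu(\psi)+1$, $\mu(\psi_1\wedge\psi_2)=\mu(\psi_1)+\mu(\psi_2)+1$, and $\mu(\psi_1\vee\psi_2)=\mu(\psi_1)\cdot\mu(\psi_2)$. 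A direct computation gives
\[
  \mu\bigl(\psi\vee(\psi_1\wedge\psi_2)\bigr)-\mu\bigl((\psi\vee\psi_1)\wedge(\psi\vee\psi_2)\bigr) \;=\; \mu(\psi)-1 \;\geq\; 1,
\]
so applying the distribution rule to any subformula strictly decreases the measure of the whole formula, as $\mu$ is strictly monotone in its subterms.

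Finally, I would check that the result really has shape (\ref{eq syntax}): after phase~1 no $\rightarrow$ remains, and no later phase reintroduces one; after phase~2 every $\lnot$ stands directly in front of an atomic formula $s\approx t$, so negative literals appear as $s\not\approx t$ in the standard form; after phase~3 no $\vee$ has a $\wedge$-node as a child, so modulo associativity and commutativity the formula is a conjunction of disjunctions of literals. The main obstacle is termination of phase~3, for which the polynomial measure $\mu$ is the key device; the remaining arguments are routine propositional bookkeeping.
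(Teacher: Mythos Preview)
Your argument is correct. The paper itself does not prove Lemma~\ref{lemma cnf}: it simply states it, preceded only by the remark ``this transformation preserves the semantics of $\varphi$,'' treating the result as standard propositional folklore. So there is no ``paper's proof'' to compare against in any substantive sense.

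What you have written goes well beyond what the paper supplies: you verify that each rewrite rule is a propositional equivalence, that logical equivalence is a congruence (so local rewrites propagate), and---most usefully---you give explicit termination arguments for each phase, including the multiplicative measure $\mu$ for distributivity. Your computation $\mu(\psi\vee(\psi_1\wedge\psi_2))-\mu((\psi\vee\psi_1)\wedge(\psi\vee\psi_2))=\mu(\psi)-1\geq 1$ is correct, and $\mu$ is invariant under associativity and commutativity of $\wedge,\vee$ (sums and products are AC), so it is well-defined on the AC-equivalence classes the paper works with. One cosmetic point: since you set $\mu(\ell)=2$ for literals, the separate clause $\mu(\lnot\psi)=\mu(\psi)+1$ is never invoked during phase~3 (all negations sit inside literals after phase~2); it does no harm, but you could drop it.
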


Based on the conjunctive normal form, we define the idealisation.
We first discuss the special case of arithmetic clauses.

\begin{definition}
Let $C = \bigvee_{j=1}^{n} s_{j} \not\approx t_{j} \vee \bigvee_{k=1}^{n'} p_{k} \approx q_{k} \in \Formulas(\Sigma)$ be an arithmetic clause.
The \emph{idealisation} $\Id(C)$ of $C$ is the following predicate considered as a statement in the free algebra $\Z\<X_C>$:
\begin{align}\label{eq idealisation}
	\Id(C) \;\coloneqq\; \text{there exists } 1 \leq k \leq n' \text{ such that }p_{k} - q_{k} \in (s_{1} - t_{1},\dots, s_n - t_n ).
\end{align}
\end{definition}

\begin{remark}
Formally, the idealisation $\Id(C)$ has to be considered as a nullary predicate, parametrised by the arithmetic clause $C$.   
Thus, for each arithmetic clause $C$, we obtain a different predicate.
This is to avoid having to deal with higher-order logics in order to define a predicate of formulas.
\end{remark}

To motivate this definition, write $C$ in the equivalent form $\bigwedge_j s_{j} \approx t_{j} \rightarrow \bigvee_{k} p_{k} \approx q_{k}$.
This shows that $C$ is universally true if and only if at least one identity $p_{k} \approx q_{k}$ can be derived from all $s_{j} \approx t_{j}$ and from the linearity axioms encoded in $\Axioms$. 
This fact is described by $\Id(C)$, neglecting sorts.

To extend this definition to arbitrary arithmetic operator statements $\varphi$, we consider the normal form $\CNF(\varphi)$ and use the fact that a conjunction 
of clauses is satisfied if and only if each clause is satisfied individually.
Intuitively, each clause in the CNF corresponds to one case in a proof that $\varphi$ is universally true.
Based on this, the idealisation of $\varphi$ is given by conjunctively combining the idealisations of all clauses $C$ in $\CNF(\varphi)$.

\begin{definition}
Let $\varphi \in \Formulas(\Sigma)$ be an arithmetic operator statement. 
The \emph{idealisation} $\Id(\varphi)$ of $\varphi$ is the predicate
\[
	\Id(\varphi) \coloneqq \bigwedge_{\substack{C \text{ clause}\\[0.1em] \text{of } \CNF(\varphi)}} \Id(C).
\]
\end{definition}

\begin{example}
We compute the truth value of the idealisation $\Id(\varphi)$ of the arithmetic operator statement $\varphi$ from Example~\ref{ex:idealisation}, that is,
\[
	\varphi = \left(
	a\cdot b\cdot a \approx 0_{u,v} + c \wedge
	d\cdot a \not\approx c
	\right)
	\rightarrow
	\left(
	a \cdot b - d \not\approx 0_{v,v}
	 \wedge 
	a \cdot b\cdot c \approx c \cdot b\cdot a
	\right).
\]
First, note that, $\CNF(\varphi) = C_{1} \wedge C_{2}$, with clauses 
\begin{align*}
	C_{1} &= a\cdot b\cdot a \not\approx 0_{u,v} + c \;\vee\; d\cdot a \approx c \;\vee\; a \cdot b - d \not\approx 0_{v,v}, \\
	C_{2} &= a\cdot b\cdot a \not\approx 0_{u,v} + c \;\vee\; d\cdot a \approx c \;\vee\; a \cdot b\cdot c \approx c \cdot b\cdot a.
\end{align*}
The truth values of the idealisations of these two clauses correspond to those of the polynomial ideal membership statements
\begin{align*}
	\Id(C_{1}) &\,\equiv\, da - c \in \left(a b a - c, a b - d\right), \\
	\Id(C_{2}) &\,\equiv\, da - c \in (a b a - c) \;\text{ or }\; a bc - c b a \in (aba- c),
\end{align*}
which both hold since
\begin{alignat*}{2}
	da - c &= (aba - c) - (ab - d)a &&\in \left(a b a - c, a b - d\right), \\
	abc - cba &= (aba - c)ba - ab(aba - c) &&\in \left( aba - c\right).
\end{alignat*}
Therefore, we have $\Id(C_{1}) \equiv \Id(C_{2}) \equiv \top$, and thus, by definition, also
\begin{align*}
	\Id(\varphi) \equiv \Id(C_{1}) \wedge \Id(C_{2}) \equiv \top \wedge \top \equiv \top.
\end{align*}

\end{example}

In the succeeding section, we will prove that the idealisation of an arithmetic operator statement $\varphi$ is true if and only if $\varphi$ is universally true.
To this end, we state the following lemma, which relates the semantics of the basic logical connectives $\wedge$ and $\vee$ to the notion of idealisation.

\begin{lemma}\label{lemma properties idealisation}
For arithmetic operator statements $\varphi, \psi \in \Formulas(\Sigma)$ the following hold:
\begin{enumerate}
	\item $\Id(\varphi) \equiv \top$ implies $\Id(\varphi \vee \psi) \equiv \top$;
	\item $\Id(\varphi) \equiv \top$ and $\Id(\psi) \equiv \top$ implies $\Id(\varphi \wedge \psi) \equiv \top$;
\end{enumerate}
\end{lemma}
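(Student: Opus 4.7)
The plan is to reduce both claims to a careful analysis of how $\CNF$ behaves under $\wedge$ and $\vee$, and then to observe that the ideal-membership predicate in the definition of $\Id(C)$ is monotone in the set of negative literals of $C$.

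First I would establish two compositionality facts for the normal form produced by the rewrite rules~\eqref{eq:rewrite-1}--\eqref{eq:rewrite-3}. Writing $\CNF(\varphi) = \bigwedge_{i=1}^m C_i$ and $\CNF(\psi) = \bigwedge_{j=1}^n D_j$, I would argue (modulo associativity and commutativity of $\wedge,\vee$) that $\CNF(\varphi \wedge \psi)$ equals $\bigwedge_i C_i \wedge \bigwedge_j D_j$ and that $\CNF(\varphi \vee \psi)$ equals $\bigwedge_{i,j}(C_i \vee D_j)$. This is because steps 1 and 2 of the rewriting act locally on the immediate subformulas, so after them we obtain $\mathrm{NNF}(\varphi) \ast \mathrm{NNF}(\psi)$ for the appropriate connective $\ast$; step 3 then either keeps the outer $\wedge$ intact or distributes the outer $\vee$ over the inner $\wedge$s, yielding precisely the two expressions above. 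I will make sure to work modulo the associativity and commutativity convention used in the paper so that ``clauses of $\CNF(\cdot)$'' is well defined.

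For part (2), the observation above shows that every clause of $\CNF(\varphi \wedge \psi)$ is already a clause of either $\CNF(\varphi)$ or $\CNF(\psi)$. Since $\Id(\varphi) = \top$ and $\Id(\psi) = \top$ mean precisely that $\Id(C)$ holds for every such clause, the conjunction $\bigwedge_{C} \Id(C)$ over all clauses of $\CNF(\varphi \wedge \psi)$ is true as well, i.e., $\Id(\varphi \wedge \psi) = \top$.

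For part (1), I would fix a clause $C_i \vee D_j$ of $\CNF(\varphi \vee \psi)$ and use $\Id(\varphi) = \top$ applied to $C_i$. This gives an index $k$ and a positive literal $p_k \approx q_k$ in $C_i$ such that $p_k - q_k$ lies in the ideal $I_i \subseteq \Z\<X_{C_i}>$ generated by the polynomials $s_j - t_j$ coming from the negative literals of $C_i$. Embedding into the larger ring $\Z\<X_{C_i \vee D_j}>$, the same relation $p_k - q_k \in I_i$ holds, and $I_i$ is contained in the ideal generated by the negative literals of $C_i \vee D_j$ (which is obtained by adjoining further generators coming from $D_j$). Hence $\Id(C_i \vee D_j)$ holds, and taking the conjunction over all $i,j$ yields $\Id(\varphi \vee \psi) = \top$.

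The main obstacle is the bookkeeping around the CNF decomposition: the rewrite rules of Section~3.2 produce a unique normal form only modulo associativity/commutativity of $\wedge$ and $\vee$, and one must verify that the decompositions $\CNF(\varphi \wedge \psi) = \CNF(\varphi) \wedge \CNF(\psi)$ and $\CNF(\varphi \vee \psi) = \bigwedge_{i,j}(C_i \vee D_j)$ hold in this precise sense. Once this is in place, the ideal-membership argument is essentially a monotonicity statement and poses no further difficulty.
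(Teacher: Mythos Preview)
Your proposal is correct and follows essentially the same route as the paper's proof: both arguments rest on the decomposition of $\CNF(\varphi\vee\psi)$ into clauses $C_i\vee D_j$ and of $\CNF(\varphi\wedge\psi)$ into the union of the clauses of $\CNF(\varphi)$ and $\CNF(\psi)$, and then on the monotonicity of the ideal-membership predicate (your ``subideal plus more candidate positive literals'' observation is exactly what the paper phrases as the ideal of $\Id(C_i)$ being a subideal of that of $\Id(C_i\vee D_j)$ and the candidate members being a subset). Your extra care about justifying the CNF compositionality and about the change of ambient ring $\Z\<X_{C_i}> \hookrightarrow \Z\<X_{C_i\vee D_j}>$ is a welcome addition that the paper leaves implicit.
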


\begin{proof}
	\begin{enumerate}
		\item Let $\CNF(\varphi) = \bigwedge_i C_i$ and $\CNF(\psi) = \bigwedge_j D_j$ with clauses $C_i, D_j$.
		Each clause of $\CNF(\varphi \vee \psi)$ is of the form $C_i \vee D_j$.
		By assumption $\Id(C_i) \equiv \top$ for all $i$. 
		Then the statement follows from the fact that the ideal corresponding to $\Id(C_i)$ is contained in the one corresponding to $\Id(C_i  \vee D_j)$
		and the set of candidate members for $\Id(C_i)$ is a subset of those for $\Id(C_i  \vee D_j)$.
		
		\item By assumption, the idealisation of each clause of $\CNF(\varphi)$ and $\CNF(\psi)$ is true.
		 Then the statement follows from the fact that each clause of $\CNF(\varphi \wedge \psi)$ is a clause from $\CNF(\varphi)$ or from $\CNF(\psi)$.\qedhere
	\end{enumerate}
\end{proof}

\subsection{Characterising universal truth via ideal membership}
\label{sec:main-theorem}

In this section, we relate the semantic notion of universal truth of an arithmetic operator statement $\varphi$ to the ideal theoretic statement $\Id(\varphi) \equiv \top$.
The main result is the following theorem.
Recall that we have fixed a signature $\Sigma = (\bm{O} \times \bm{O},\bm{C},\bm{F},\sigma)$.

\begin{theorem}\label{thm nullstellensatz}
An arithmetic operator statement $\varphi \in \Formulas(\Sigma)$ is universally true if and only if $\Id(\varphi) \equiv \top$.
\end{theorem}

\begin{proof}
The ``if''-part follows from Lemma~\ref{lemma syntax to semantics} below and the ``only if''-part from Lemma~\ref{lemma semantics to syntax}.
\end{proof}

Theorem~\ref{thm nullstellensatz} reduces universal truth to the verification of finitely many ideal memberships in $\Z\<X>$.
Since ideal membership in $\Z\<X>$ is semi-decidable (Proposition~\ref{prop ideal membership}) this immediately yields a semi-decision procedure for verifying universal truth of arithmetic operator statements.
In Section~\ref{sec:all-operator-statements}, we generalise this procedure to all operator statements. 

\begin{remark}
We make some remarks about Theorem~\ref{thm nullstellensatz}.
\begin{enumerate}
	\item The idealisation $\Id(\varphi)$ is independent of the signature $\Sigma$.
	This is because the corresponding polynomial ideal membership statements, and thus also the truth value of $\Id(\varphi)$, are the same for all signatures $\Sigma$ in which $\varphi$ can be formulated.
	In particular, they are independent of the sorts appearing in $\varphi$.
	Consequently, Theorem~\ref{thm nullstellensatz} tells us that a single computation with polynomials shows the
	universal truth of $\varphi$ in \emph{all} signatures in which $\varphi$ can be formulated.
	\item The idealisation $\Id(\varphi)$ is defined with respect to a specific CNF of $\varphi$.
	However, Theorem~\ref{thm nullstellensatz} implies that the truth value of $\Id(\varphi)$ is independent of the concrete CNF used.
	Thus, any CNF of $\varphi$, in fact, any CNF that is $\Axioms$-equivalid to $\varphi$, can be used to compute the truth value of $\Id(\varphi)$.
	This can provide computational advantages.
\end{enumerate}
\end{remark}

Theorem~\ref{thm nullstellensatz} is a generalisation of~\cite[Thm.~32]{RRH21}.
We state a specialisation of this result below, as it will play a crucial role in proving the sufficiency part of Theorem~\ref{thm nullstellensatz}.
The language of~\cite{RRH21} differs from the language of this work in many points.
Consequently, the following result, which we formulate in the language of this work, looks very different to the original in~\cite{RRH21}.
To facilitate the comparison between the two versions, we state the main differences in Remark~\ref{remark:comparison}.
We also note that Theorem~\ref{cor rrh21} is a formal version of Theorem~\ref{cor:quasi-identities}.

\begin{theorem}[{\cite[Thm.~32]{RRH21}}]\label{cor rrh21}
Let $C \in \Formulas(\Sigma)$ be an arithmetic operator statement of the form
\[
	C = \left(s_1 \approx t_1 \wedge \dots \wedge s_n \approx t_n\right) \rightarrow p \approx q.
\]
If $\Id(C) \equiv \top$, then $C$ is universally true.
\end{theorem}

\begin{remark}
\label{remark:comparison}
\mbox{}
\begin{enumerate}
	\item We note that~\cite[Thm.~32]{RRH21} is phrased for $R$-linear categories, where $R$ is a commutative ring with unit.  
	By specialising $R = \Z$ the result follows for preadditive categories. 
	By examining the proof it becomes clear that it also holds for preadditive semicategories,
	as the proof does not rely on any properties of identity morphisms, which is the only difference between $\Z$-linear categories and preadditive semicategories.

	 \item In this work, sources and targets of morphisms are encoded in form of sorts via the sort function $\sigma$.
	 In contrast to this,~\cite{RRH21} uses a directed multigraph, also known as \emph{quiver}, for this.
	 A quiver is a natural choice for representing morphisms, however, it is unsuited for representing functions that use those morphisms as
	 arguments.
	 In the language of this work, quivers are good for representing the sorts of variables and constant symbols but not for representing the sorts of function symbols.
	 We note that~\cite{RRH21} does not consider function symbols other than the arithmetic function symbols, whose sorts are predetermined anyway.

	 \item An important notion in~\cite{RRH21} are \emph{compatible polynomials}.
	 Being compatible means that the polynomials respect the restrictions imposed by the sources and targets of the morphisms they model.
	 In the terminology of this work, one can say that compatible polynomials are precisely those elements $s - t$ that originate from translating literals $s \approx t$ or $s \not\approx t$.
	 
	\item Another difference between~\cite{RRH21} and this work is how a semantic meaning is assigned to syntactic objects.
	Here we use the concept of interpretations for this, while \cite{RRH21} uses \emph{representations} of a quiver.
	It turns out that the latter can be considered as a special case of the former.

\end{enumerate}
\end{remark}

The remainder of this section is dedicated to proving Theorem~\ref{thm nullstellensatz}.
More precisely, Lemma~\ref{lemma syntax to semantics} proves the sufficiency part of Theorem~\ref{thm nullstellensatz}
by reducing the case of arbitrary formulas to the case of clauses and using Theorem~\ref{cor rrh21}.
For the other implication, we use the fact that the sequent calculus LK$^=$ is sound and complete, i.e., that $\AA \models \varphi$ if and only if $\AA \vdash \varphi$.
In particular, we consider the idealisation of certain sequents and show in Lemma~\ref{lemma preserve contradiction} that all relevant sequent rules of LK$^{=}$ (see~\ref{sec:formal-comp}) preserve the property of these idealisations being true.
In order to handle the presence of the axioms $\AA$ in sequents, we prove Lemma~\ref{lemma remove axioms} which captures one of the main advantages of our approach -- the axioms $\AA$ can be neglected when performing idealisations.
Finally, we combine Lemma~\ref{lemma preserve contradiction} and~\ref{lemma remove axioms} with Herbrand's theorem to prove the necessity part of Theorem~\ref{thm nullstellensatz} in Lemma~\ref{lemma semantics to syntax}.

\begin{lemma}\label{lemma syntax to semantics}
Let $\varphi \in \Formulas(\Sigma)$ be an arithmetic operator statement.
If $\Id(\varphi) \equiv \top$, then~$\varphi$ is universally true.
\end{lemma}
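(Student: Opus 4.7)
The plan is to reduce the arbitrary arithmetic ground sentence $\varphi$ to its clauses via the conjunctive normal form, and then to reduce each clause in turn to the single-positive-literal case handled by Corollary~\ref{cor rrh21}.

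First I would invoke Lemma~\ref{lemma cnf} to replace $\varphi$ by $\CNF(\varphi) = \bigwedge_i C_i$, which is logically equivalent to $\varphi$ and therefore satisfies $\AA \models \varphi$ if and only if $\AA \models C_i$ for every clause $C_i$. By the definition of idealisation, the hypothesis $\Id(\varphi) = \top$ is exactly the statement that $\Id(C_i) = \top$ for every $i$, so it suffices to prove $\AA \models C$ for an arbitrary arithmetic ground clause $C$ with $\Id(C) = \top$.

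Next I would fix such a clause $C = \bigvee_{j=1}^{n} s_{j} \not\approx t_{j} \vee \bigvee_{k=1}^{n'} p_{k} \approx q_{k}$. From $\Id(C) = \top$ we obtain some index $k_0$ with $p_{k_0} - q_{k_0} \in (s_1 - t_1, \dots, s_n - t_n)$ in $\Z\<X_C>$. Consider the shorter clause $\tilde C = \bigvee_{j=1}^{n} s_{j} \not\approx t_{j} \vee p_{k_0} \approx q_{k_0}$, which has exactly one positive literal and whose idealisation is precisely this ideal-membership statement, so $\Id(\tilde C) = \top$. Corollary~\ref{cor rrh21} then yields $\AA \models \tilde C$. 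Since every model of $\tilde C$ is a model of the logically weaker clause $C$ (the latter has additional disjuncts), we obtain $\AA \models C$, completing the reduction.

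I do not expect a major obstacle here: the argument is essentially bookkeeping that assembles Lemma~\ref{lemma cnf}, the definition of $\Id$, and Corollary~\ref{cor rrh21}. The only conceptually subtle point is recognising that the disjunctive character of $\Id(C)$ over the positive literals (``\emph{some} $k$'') is precisely what allows the reduction to the Horn-like clause $\tilde C$ handled by Corollary~\ref{cor rrh21}; this is the step that makes the many-positive-literal case collapse to the single-positive-literal case without any extra work on the polynomial side.
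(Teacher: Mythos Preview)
Your proposal is correct and follows essentially the same route as the paper's proof: reduce to clauses via $\CNF(\varphi)$ and Lemma~\ref{lemma cnf}, then for each clause pick the positive literal witnessing the ideal membership, apply Corollary~\ref{cor rrh21} to the resulting single-positive-literal subclause, and weaken back to the full clause. The only cosmetic difference is the order of presentation (the paper treats the clause case first), and both arguments tacitly use that the relevant ideal membership is unaffected by passing from $\Z\<X_C>$ to $\Z\<X_{\tilde C}>$.
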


\begin{proof}
Recall that $\varphi$ being universally true means that $\Axioms \models \varphi$.

First, we consider the case where $\varphi$ is a clause, that is,
\[
	\varphi = \bigvee_{j=1}^n s_j \not\approx t_j \vee \bigvee_{k=1}^{n'} p_k \approx q_k.
\]
By definition of the idealisation, $\Id(\varphi) \equiv \top$ if and only if there exists $1 \leq k \leq n'$ such that $p_{k} - q_{k} \in (s_{1} - t_{1},\dots, s_n - t_n)$.
Reordering the literals in $\varphi$, we can,
without loss of generality, assume that $k = 1$.
Let $C = \bigvee_{j=1}^n s_j \not\approx t_j \vee p_{1} \approx q_{1}$ and note that $\Id(C) \equiv \top$.
Since $C$ is logically equivalent to
\[
	\left(s_1 \approx t_1 \wedge \dots \wedge s_{n} \approx t_{n}\right) \rightarrow p_{1} \approx q_{1},
\]
Theorem~\ref{cor rrh21} implies that $C$ is universally true, i.e., that $\Axioms \models C$, but then also $\Axioms \models C \vee \bigvee_{k=2}^{n'} p_k \approx q_k$,
and the result follows.

For arbitrary $\varphi$, let $\CNF(\varphi) = \bigwedge_{i=1}^m C_i$ with clauses $C_i$.
Then $\Id(\varphi) \equiv \top$ implies $\Id(C_i) \equiv \top$ for all $i = 1,\dots,m$.
By the previous discussion, $\Axioms \models C_i$ for all $i = 1,\dots,m$.
Now, the semantics of $\wedge$ implies $\Axioms \models \bigwedge_{i=1}^m C_i$, showing that $\CNF(\varphi)$ is universally true.
Then the result follows from Lemma~\ref{lemma cnf}.
\end{proof}

Let $\Gamma, \Delta \subseteq \Formulas(\Sigma)$ be finite multisets of arithmetic operator statements.
In the following, we define an idealisation of sequents $\Gamma \vdash \Delta$.
More precisely, we denote 
\[
	\Id\left(\Gamma \vdash \Delta\right) \;\coloneqq\; \Id\left(\bigvee_{\gamma \in \Gamma} \lnot \gamma \vee \bigvee_{\delta \in \Delta} \delta\right).
\]
We say that an axiom sequent rule\def\fCenter{\ \vdash\ }\AxiomC{$$}\UnaryInf$\Gamma \fCenter \Delta$\DisplayProof
\emph{preserves validity} if $\Id(\Gamma \vdash \Delta) \equiv \top$.
Similarly, we say that a sequent rule of the form \def\fCenter{\ \vdash\ }\Axiom$\Gamma' \fCenter \Delta'$\UnaryInf$\Gamma \fCenter \Delta$\DisplayProof 
\emph{preserves validity} if $\Id(\Gamma' \vdash \Delta') \equiv \top$ implies $\Id(\Gamma \vdash \Delta) \equiv \top$.
Finally, a sequent rule of the form \def\fCenter{\ \vdash\ }\Axiom$\Gamma_1 \fCenter \Delta_1$\Axiom$\Gamma_2 \fCenter \Delta_2$\BinaryInf$\Gamma \fCenter \Delta$\DisplayProof
\emph{preserves validity} if \mbox{$\Id(\Gamma_1 \vdash \Delta_1) \equiv \top$ and $\Id(\Gamma_2 \vdash \Delta_2) \equiv \top$}
implies $\Id(\Gamma \vdash \Delta) \equiv \top$.

\begin{lemma}\label{lemma preserve contradiction}
All sequent rules of the sequent calculus LK$^{=}$ (see~\ref{sec:formal-comp}) not involving quantifiers preserve validity when applied to arithmetic operator statements.
\end{lemma}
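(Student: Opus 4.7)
The plan is to verify the claim rule by rule, analysing how the conjunctive normal form of the ``semantic formula'' $F(\Gamma\vdash\Delta) \coloneqq \bigvee_{\gamma\in\Gamma}\lnot\gamma \vee \bigvee_{\delta\in\Delta}\delta$ transforms under each rule of $\textrm{LK}^=$. Throughout, I would exploit two facts: first, the CNF rewriting rules \eqref{eq:rewrite-1}--\eqref{eq:rewrite-3} are precisely the syntactic content of the propositional inference rules, so many cases reduce to noting that $\CNF\bigl(F(\text{premise})\bigr)$ and $\CNF\bigl(F(\text{conclusion})\bigr)$ coincide (modulo associativity/commutativity of $\wedge, \vee$); second, whenever two CNFs differ, one is obtained from the other by a very controlled operation (union, clause subsumption, or duplication of literals), whose effect on the idealisation is easy to analyse.

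The axiom rules (Ax) and (Ref) are immediate: every clause of $\CNF(F(\Gamma, s\approx t\vdash \Delta, s\approx t))$ contains both literals $s\not\approx t$ and $s\approx t$, so in \eqref{eq idealisation} the candidate $p_k-q_k = s-t$ lies in the ideal which itself has $s-t$ as a generator; likewise every clause for (Ref) contains $t\approx t$, yielding $t-t=0$ in any ideal. For the structural rules, weakening is handled by Lemma~\ref{lemma properties idealisation}(1) applied clause-wise, and contraction by the fact that duplicating a literal within a clause of a CNF leaves both the generated ideal and the set of candidate members unchanged. For the non-branching propositional rules (($\lnot\vdash$), ($\vdash\lnot$), ($\wedge\vdash$), ($\vdash\vee$), ($\vdash\rightarrow$)), the premise and conclusion yield the same CNF once the rewrite rules \eqref{eq:rewrite-1}--\eqref{eq:rewrite-2} are applied, so the idealisations are literally equal. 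For the branching rules (($\vee\vdash$), ($\vdash\wedge$), ($\rightarrow\vdash$)), a short computation using the distributivity law \eqref{eq:rewrite-3} shows that $\CNF(F(\text{conclusion}))$ is exactly the (multiset) union of the two premise CNFs: if $\CNF(A\vee\lnot\varphi) = \{C_i\vee V_k\}$ and $\CNF(A\vee\lnot\psi) = \{C_i\vee W_l\}$, then $\CNF(A\vee(\lnot\varphi\wedge\lnot\psi)) = \{C_i\vee V_k\} \cup \{C_i\vee W_l\}$, so validity of each premise supplies the idealisation of the corresponding clauses.

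The main obstacle is the equational rule (Sub). Here the premise and conclusion differ in the literal obtained from $\varphi[x\mapsto t]$ versus $\varphi[x\mapsto s]$, and the formulas are not logically equivalent on their own — the equivalence is only modulo $s\approx t$, which appears explicitly in $\Gamma$. The key observation that I would prove by induction on the term structure is that for every ground arithmetic term $\tau(x)$ (containing $x$ possibly at multiple positions), the polynomial $T(\tau[x\mapsto s]) - T(\tau[x\mapsto t])$ lies in the two-sided ideal $(s-t) \subseteq \Z\<X>$; the inductive step uses bilinearity of multiplication in $\Z\<X>$, which directly matches the arithmetic function symbols. Lifting this to literals, for any arithmetic equation $p\approx q$ containing $x$, the polynomials $p[x\mapsto s]-q[x\mapsto s]$ and $p[x\mapsto t]-q[x\mapsto t]$ agree modulo $(s-t)$. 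Consequently, in every clause of $\CNF(F(\text{conclusion}))$, the ideal generators (coming from the negated literals including $s\not\approx t$) generate the same ideal as the corresponding clause of $\CNF(F(\text{premise}))$, and the candidate members likewise agree modulo that ideal; hence validity of the premise idealisation transfers to the conclusion. Once this polynomial substitution lemma is in place, the case is concluded by the same clause-by-clause inspection as the earlier rules.
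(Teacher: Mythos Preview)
Your proposal is correct and follows essentially the same rule-by-rule strategy as the paper: axioms and weakening via Lemma~\ref{lemma properties idealisation}(1), the non-branching propositional rules by observing that premise and conclusion yield identical CNFs, the branching rules by showing that the conclusion CNF is the conjunction (union of clauses) of the two premise CNFs and then invoking Lemma~\ref{lemma properties idealisation}(2), and (Sub) by the congruence $T(\tau[x\mapsto s])\equiv T(\tau[x\mapsto t])\pmod{(s-t)}$ together with the fact that $s-t$ is always a generator in every clause. Your treatment is in fact slightly more explicit than the paper's in two places: you spell out the term-induction underlying the (Sub) case, and for contraction you argue via literal duplication rather than citing Lemma~\ref{lemma properties idealisation}(1) (which, strictly speaking, points in the wrong direction for contraction). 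One small imprecision: when $\varphi$ is not a literal, the premise CNF for contraction may contain \emph{extra} clauses beyond duplicated-literal versions of the conclusion clauses; your argument still goes through because each conclusion clause $C_i\vee V_k$ appears among the premise clauses as the diagonal instance $C_i\vee V_k\vee V_k$, but the phrasing ``duplicating a literal within a clause'' undersells this.
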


\begin{proof}

All sequent rules in LK$^{=}$ contain multisets $\Gamma$ and $\Delta$ which are comprised of all formulas that are irrelevant for the particular rule.
To take care of these formulas, we denote in the following $\Phi = \bigvee_{\gamma \in \Gamma} \lnot\gamma \vee \bigvee_{\delta \in \Delta} \delta$.

\textbf{Axioms.}

For (Ax), we have to show that
$\Id(\Gamma, t \approx t' \vdash \Delta, t \approx t') \equiv \top$, which, by definition, means showing that $\Id(\Phi \vee t \not\approx t' \vee t \approx t') \equiv \top$.
The idealisation of $t \approx t' \vee t \not\approx t'$ corresponds to the polynomial assertion $t-t' \in (t-t')$, which is clearly true.
Thus, $\Id(t \approx t' \vee t \not\approx t') \equiv \top$, and the result follows from the first part of Lemma~\ref{lemma properties idealisation}.

For (Ref), we have to show that $\Id(\Gamma \vdash \Delta, t \approx t) \equiv \top$, which, by definition, means showing that $\Id(\Phi \vee t \approx t) \equiv \top$.
Note that $\Id(t \approx t) \equiv \top$, because it corresponds to the ideal membership $t - t = 0 \in (\emptyset) = \{0\}$.
With this, the result follows from the first part of Lemma~\ref{lemma properties idealisation}.

\textbf{Structural rules.}

Preserving validity for the rules ($\text{W}\ \vdash$) and ($\vdash\ \text{W}$) follows from the first part of Lemma~\ref{lemma properties idealisation}.
The other two rules ($\text{C}\ \vdash$) and ($\vdash\ \text{C}$) only remove duplicates of formulas.
This leaves the ideals and the sets of candidate members generated during the idealisation unchanged, and thus, also these rules preserve validity.

\textbf{Propositional rules.} 

For the $\lnot$-rules, preserving validity translates into the trivial statements
``$\Id(\Phi \vee \varphi) \equiv \top$ implies $\Id(\Phi \vee \varphi) \equiv \top$'' and ``$\Id(\Phi \vee \lnot \varphi) \equiv \top$ implies $\Id(\Phi \vee \lnot \varphi) \equiv \top$'' respectively.

For ($\vee\ \vdash$), we denote $\Psi = \Phi \vee \lnot (\varphi \vee \psi)$, $\Phi_1 = \Phi \vee \lnot \varphi$, and $\Phi_2 = \Phi \vee \lnot \psi$.
We have to show that $\Id(\Psi) \equiv \top$ assuming $\Id(\Phi_1) \equiv \top$ and $\Id(\Phi_2) \equiv \top$.
Note that applying the rewrite rules~\eqref{eq:rewrite-1} and~\eqref{eq:rewrite-2} for the CNF computation to $\Psi$ yields 
\begin{equation}\label{eq:proof-1}
	\Phi' \vee (\varphi' \wedge \psi'),
\end{equation}
where $\Phi'$ is a negation normal form of $\Phi$ and $\varphi'$, $\psi'$ are negation normal forms of $\lnot \varphi$ and $\lnot \psi$ respectively.
Analogously, applying~\eqref{eq:rewrite-1} and~\eqref{eq:rewrite-2} to $\Phi_1 \wedge \Phi_2$ yields 
\begin{equation}\label{eq:proof-2}
	(\Phi' \vee \varphi') \wedge (\Phi' \vee \psi').
\end{equation}
Applying now the distributivity rule~\eqref{eq:rewrite-3} to~\eqref{eq:proof-1} gives~\eqref{eq:proof-2}, showing that $\CNF(\Psi) = \CNF(\Phi_1 \wedge \Phi_2)$.
Thus also $\Id(\Psi) = \Id(\Phi_1 \wedge \Phi_2)$, and the result follows from the second part of Lemma~\ref{lemma properties idealisation}. 

For ($\vdash\ \vee$), we obtain the trivial statement ``$\Id(\Phi \vee \varphi \vee \psi) \equiv \top$ implies $\Id(\Phi \vee \varphi \vee \psi) \equiv \top$''.

The proofs for $\wedge$ are dual to the proofs for $\vee$.

Using the fact that $\CNF(\varphi \rightarrow \psi) = \CNF(\lnot \varphi \vee \psi)$ the proofs for $\rightarrow$ are analogous to the proofs for $\vee$.

\textbf{Equational rule.} 

Denote $\Psi =  \Phi \vee \lnot \varphi[x \mapsto y] \vee t \not\approx t'$,
where $y$ is a new variable that does not appear in any of the formulas.
We have to show that $\Id(\Psi[y \mapsto t']) \equiv \top$ implies $\Id(\Psi[y \mapsto t]) \equiv \top$.

To this end, note that, if $\CNF(\Psi) = \bigwedge_{i} C_{i}$ with clauses $C_{i}$, then 
 $\CNF(\Psi[y \mapsto t]) = \bigwedge_{i} C_{i}[y \mapsto t]$, and analogously for $y \mapsto t'$.
This, together with the definition of the idealisation, implies that it suffices to show that $\Id(C[y \mapsto t']) \equiv \top$ implies $\Id(C[y \mapsto t]) \equiv \top$ for
every clause $C$ of $\CNF(\Psi)$.
Thus, for the following, we fix a clause $C$ of $\CNF(\Psi)$.

By definition of the CNF-transformation, $C$ contains the literal $t \not\approx t'$.
More precisely, we can write
\begin{align*}
	C = t \not\approx t' \vee \bigvee_{j=1}^{n} s_{j} \not\approx t_{j} \vee \bigvee_{k=1}^{n'} p_{k} \approx q_{k},
\end{align*}
with terms $s_{j}, t_{j}, p_{k}, q_{k}$. 
When computing the idealisation of $C$, every literal containing the variable $y$ gets translated into a polynomial depending on $y$. 
We emphasise this dependency by writing $a(y) - b(y)$ for the polynomial obtained from translating a literal $a \approx b$ or $a \not\approx b$ 
that may involve $y$, even if it actually does not. 
Then, by definition, the idealisation of $C$ is true if and only if there exists $1 \leq k \leq n'$ such that
\begin{align*}
	p_{k}(y) - q_{k}(y) \in (t - t', s_{1}(y) - t_{1}(y),\dots, s_n(y) - t_n(y)).
\end{align*}
With this, considering the idealisation of $C[y \mapsto t]$ only requires to replace the variable $y$ in the polynomials above by the translation of the term $t$.   
That is, the idealisation of $C[y \mapsto t]$ is true if and only if there exists $1 \leq k \leq n'$ such that
\begin{align*}
	p_{k}(t) - q_{k}(t) \in J \coloneqq (t - t', s_{1}(t) - t_{1}(t),\dots, s_n(t) - t_n(t)).
\end{align*}
Analogously, $\Id(C[y \mapsto t']) \equiv \top$ if and only if there exists $1 \leq k \leq n'$ such that
\begin{align*}
	p_{k}(t') - q_{k}(t') \in J' \coloneqq (t - t', s_{1}(t') - t_{1}(t'),\dots, s_n(t') - t_n(t')).
\end{align*}

Now, since $t-t'$ is a generator of both ideals $J$ and $J'$, we have that, for any polynomial $f$, $f(t)$ lies in any of these ideals if and only if $f(t')$ does.
Therefore, in fact, $J = J'$ and $p_{k}(t) - q_{k}(t) \in J$ if and only if $p_{k}(t') - q_{k}(t') \in J'$ for any $1 \leq k \leq n'$.
This implies that $\Id(C[y \mapsto t]) \equiv \top$ if and only if $\Id(C[y \mapsto t']) \equiv \top$.
\end{proof}

The following lemma captures a main advantage of our approach.
It asserts that instantiations of the axioms $\AA$ can be neglected when performing the idealisation.
Recall, from Section~\ref{sec:herbrand}, that $H(\AA)$ denotes the Herbrand expansion of $\AA$.

\begin{lemma}\label{lemma remove axioms}
Let $\varphi \in \Formulas(\Sigma)$ be an arithmetic operator statement. 
Then $\Id(\lnot \alpha \vee \varphi) \equiv \Id(\varphi)$ for all ground instances $\alpha \in H(\AA)$.
\end{lemma}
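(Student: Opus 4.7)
The plan is to reduce the claim to two elementary observations: every ground instance of an axiom of $\AA$ translates to a trivial polynomial identity in the free algebra $\Z\langle X\rangle$, and the CNF of $\lnot\alpha \vee \varphi$ is obtained by adjoining the literal $\lnot\alpha$ to each clause of $\CNF(\varphi)$. I would first verify the former by inspecting the seven axiom schemas of $\AA$. Writing $\alpha = s \approx t$, each schema --- associativity and commutativity of $+$, associativity of $\cdot$, the zero law $x + 0 \approx x$, the inverse law $x + (-x) \approx 0$, and the two distributivity laws --- becomes after substitution an identity involving only the arithmetic operations; since the translation $T$ sends $0_{u,v}$ to $0 \in \Z\langle X\rangle$, unary $-$ to polynomial negation, and $+,\cdot$ to polynomial addition and multiplication, each identity holds verbatim in $\Z\langle X\rangle$ because this is an associative ring. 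Consequently $T(s) = T(t)$, so the polynomial $s - t$ is zero.

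Next I would compute $\CNF(\lnot\alpha\vee\varphi)$. Let $\CNF(\varphi) = \bigwedge_{i=1}^m C_i$. The literal $\lnot\alpha = s \not\approx t$ is already in negation normal form and contains neither implication nor conjunction, so only the distributivity rule~\eqref{eq:rewrite-3} fires during the CNF computation, yielding
\[
\CNF(\lnot\alpha\vee\varphi) \;=\; \bigwedge_{i=1}^m (\lnot\alpha \vee C_i).
\]
By definition of idealisation, $\Id(\lnot\alpha\vee\varphi) = \bigwedge_i \Id(\lnot\alpha \vee C_i)$. Writing a clause as $C_i = \bigvee_j s_j \not\approx t_j \vee \bigvee_k p_k \approx q_k$, the predicate $\Id(\lnot\alpha \vee C_i)$ differs from $\Id(C_i)$ only in that $s - t$ is adjoined to the list of ideal generators; but $s - t = 0$ by the first observation, so the two ideals coincide and the sets of candidate members are identical. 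Hence $\Id(\lnot\alpha \vee C_i) = \Id(C_i)$, and taking the conjunction gives $\Id(\lnot\alpha\vee\varphi) = \Id(\varphi)$.

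The substantive content lies in the first observation, which is a routine case analysis but captures the key algebraic reason that the preadditive-semicategory axioms can be neglected during the ideal-membership check: the polynomial translation automatically internalises them. A minor bookkeeping point is that the ground instance $\alpha$ may introduce new constants not appearing in $\varphi$, enlarging the polynomial algebra from $\Z\langle X_\varphi\rangle$ to $\Z\langle X_{\lnot\alpha\vee\varphi}\rangle$; however, ideal membership of a polynomial in the smaller algebra is unaffected by passage to the larger one (as one sees from the evaluation homomorphism sending the additional indeterminates to $0$), so this does not affect the equality of idealisations.
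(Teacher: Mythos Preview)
Your proof is correct and follows essentially the same approach as the paper's: both arguments observe that $\lnot\alpha$ is a single negative literal $s \not\approx t$, compute $\CNF(\lnot\alpha \vee \varphi) = \bigwedge_i (\lnot\alpha \vee C_i)$, and conclude from the fact that $s - t = 0$ in $\Z\langle X\rangle$ because the axioms of $\AA$ are ring identities. Your version is simply more explicit (spelling out the case analysis over the seven axiom schemas and noting the harmless enlargement of the indeterminate set), whereas the paper compresses this into a one-line sketch.
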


\begin{proof}
Note that $\lnot \alpha = s \not \approx t$ for ground terms $s,t \in \Ground(\Sigma)$ and that 
\[
	\CNF(\lnot \alpha \vee \varphi) = \CNF(s \not\approx t \vee \varphi) = \bigwedge_i (s \not \approx t \vee C_i),
\]
where $\CNF(\varphi) = \bigwedge_i C_i$.
The ideal generated in the idealisation of each clause $s \not \approx t \vee C_i$ is 
the ideal obtained from $C_{i}$ plus the additional generator $s-t$.
The statement now follows from the fact that $s - t = 0$ since the formulas in $\AA$ only describe basic linearity properties that hold in any ring.
\end{proof}

\begin{lemma}\label{lemma semantics to syntax}
Let $\varphi \in \Formulas(\Sigma)$ be an arithmetic operator statement. 
If $\varphi$ is universally true, then $\Id(\varphi) \equiv \top$.
\end{lemma}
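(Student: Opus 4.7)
The plan is to invoke the many-sorted form of Herbrand's theorem (Corollary~\ref{cor herbrand}) to reduce $\AA \models \varphi$ to a finite formal derivation in LK$^=$ between arithmetic ground sequents, propagate the truth of the idealisation up through this derivation via Lemma~\ref{lemma preserve contradiction}, and finally shed the axiom instances using Lemma~\ref{lemma remove axioms}. To ensure that all ground instances of $\AA$ stay arithmetic, I first pass to the sub-signature of $\Sigma$ containing only the arithmetic function symbols; this is harmless since $\varphi$ and every formula of $\AA$ already lie in this sub-signature and semantic consequence is preserved by such a restriction.

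Since $\varphi$ is an arithmetic ground sentence, it is trivially in Herbrand normal form with $H(\varphi) = \{\varphi\}$, while the axioms in $\AA$ are in Skolem normal form by construction. Applying Corollary~\ref{cor herbrand} to $\AA \models \varphi$ therefore yields finitely many $\psi_1,\dots,\psi_k \in H(\AA)$ together with a formal derivation in LK$^=$ of the sequent $\psi_1,\dots,\psi_k \vdash \varphi$, and by our choice of signature every $\psi_i$ is an arithmetic ground sentence.

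Next I claim this derivation may be assumed to consist entirely of arithmetic ground sequents, so that only the rules covered by Lemma~\ref{lemma preserve contradiction} appear. This follows from a routine inspection of the sequent calculus: every rule other than the four quantifier rules has the property that if its conclusion is an arithmetic ground sequent then so is each premise -- the structural and propositional rules act on subformulas, and in the equational rule (Sub) the terms $s$ and $t$ already occur in the conclusion and are thus themselves arithmetic ground -- whereas each quantifier rule puts a quantified formula into its conclusion and hence cannot appear anywhere in a derivation whose root is quantifier-free. A straightforward induction on the length of the derivation, with base case the axioms (Ax) and (Ref) and inductive step each non-quantifier rule, then yields $\Id(\psi_1,\dots,\psi_k \vdash \varphi) = \top$.

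Unfolding the definition, this reads $\Id\bigl(\lnot \psi_1 \vee \dots \vee \lnot \psi_k \vee \varphi\bigr) = \top$. Since each $\psi_i \in H(\AA)$, iterating Lemma~\ref{lemma remove axioms} removes the disjuncts $\lnot \psi_i$ one at a time without changing the idealisation, giving $\Id(\varphi) = \top$ as required. I expect the main obstacle to be the third step: the derivation furnished by Corollary~\ref{cor herbrand} is a priori an arbitrary LK$^=$ proof that could in principle use quantifier rules on intermediate sequents, which are not covered by Lemma~\ref{lemma preserve contradiction}; confirming that such rules may in fact be avoided altogether -- by the downward-propagation argument above -- is what allows the induction to go through.
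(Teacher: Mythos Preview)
Your proof is correct and follows the same route as the paper: invoke Corollary~\ref{cor herbrand}, observe that the resulting derivation of the quantifier-free end-sequent can be taken without quantifier rules, propagate $\Id(\cdot)=\top$ through it via Lemma~\ref{lemma preserve contradiction}, and strip the axiom instances with Lemma~\ref{lemma remove axioms}. Your explicit passage to the arithmetic sub-signature is a sensible precaution that the paper leaves implicit---it guarantees that the instances $\psi_i\in H(\AA)$ are themselves arithmetic, which is what is needed for $\Id$ to be defined on the intermediate sequents.
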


\begin{proof}
Recall that $\varphi$ being universally true means that $\Axioms \models \varphi$.
Since $\varphi$ is quantifier-free, all variables that appear in $\varphi$ are free.
In fact, by considering the universal closure of $\varphi$
and using Herbrandisation to replace all variables by new constant symbols, 
we can, without loss of generality, assume that $\varphi$ is a ground sentence.
Note that this preserves universal truth.

The axioms $\Axioms$ form a set of sentences in Skolem normal form.
Consequently, Theorem~\ref{thm:herbrand} yields the existence of $\alpha_1,\dots,\alpha_k \in H(\Axioms)$ such that
\begin{align}\label{eq proof idealisation}
	\alpha_1 \wedge \dots \wedge \alpha_k \vdash \varphi.
\end{align}
Here we used the fact that $\varphi$ does not contain any variables, and therefore $H(\varphi) = \{\varphi\}$.
Since all formulas in~\eqref{eq proof idealisation} are quantifier-free, there exists a formal proof in the sequent calculus LK$^{=}$ that does not use any of the quantification rules.
As, by Lemma~\ref{lemma preserve contradiction}, all other rules preserve validity, we have that 
\[
	\top \equiv \Id(\alpha_1 \wedge \dots \wedge \alpha_k \vdash \varphi) = \Id(\lnot \alpha_1 \vee \dots \vee \lnot\alpha_k \vee \varphi).
\]
Then repeated application of Lemma~\ref{lemma remove axioms} yields the desired result.
\end{proof}

\section{A semi-decision procedure for all operator statements}
\label{sec:all-operator-statements}

Using Herbrand's theorem and Ackermann's reduction (Section~\ref{sec:herbrand-ackermann}), we can systematically reduce the case of verifying that an arbitrary operator statement $\varphi$ is universally true to the previously discussed case of arithmetic operator statements.
The following steps give an overview on how this can be done.
They can be considered as an adaptation of Gilmore's algorithm~\cite{gilmore1960proof}.

\begin{enumerate}
	\item Bring $\varphi$ into Herbrand normal form, denoted by $\varphi^H$.
	\item Let $\varphi_1,\varphi_2,\dots$ be an enumeration of the Herbrand expansion $H(\varphi^H)$.
	\item Let $n = 1$.
	\item Form the formula $\psi_n = \bigvee_{i=1}^n \varphi_i$.\label{step loop}
	\item Remove all non-arithmetic function symbols from $\psi_n$ using Ackermann's reduction.
	Denote the obtained formula by $\psi_n^\text{Ack}$ (note that $\psi_n^\text{Ack}$ is an arithmetic operator statement).
	\item If $\Id(\psi_n^\text{Ack}) \equiv \top$, then $\varphi$ is universally true.
	Otherwise, increase $n$ by $1$ and go to step~\ref{step loop}.\label{step check}
\end{enumerate}

Since our language is expressive enough to encode the word problem for semigroups, which is known to be undecidable~\cite[Thm.~4.5]{Dav58}, we cannot expect to obtain a correct and complete algorithm that terminates on any input.
The best we can hope for is a semi-decision procedure that terminates if and only if an operator statement is indeed universally true.
However, the steps above, as phrased now, still have a subtle flaw that stops them from even being a semi\nobreakdash-decision procedure. 

The conditional check in step~\ref{step check} requires to decide certain ideal memberships.
While \emph{verifying} ideal membership of noncommutative polynomials is always possible in finite time, \emph{disproving} it is generally not.
Consequently, verifying that the condition in step~\ref{step check} is false is generally not possible in finite time.
In cases where this is required, the procedure above cannot terminate -- even if $\varphi$ is indeed universally true.

To overcome this flaw and to obtain a true semi-decision procedure, we have to interleave the computations done for different values of $n$.
Procedure~\ref{algo semantic consequence} shows one way how this can be done.
It essentially follows the steps described above, except that it only performs finitely many steps to check if $\Id(\psi_n^\text{Ack}) \equiv \top$ for each $n$.

\floatname{algorithm}{Procedure}
\begin{algorithm}[h]
\renewcommand{\algorithmicrequire}{\textbf{Input:}}
\renewcommand{\algorithmicensure}{\textbf{Output:}}
\caption{Semi-decision procedure for verifying universal truth}\label{algo semantic consequence}
\begin{algorithmic}[1]
\Require{a signature $\Sigma$ and an operator statement $\varphi \in \Formulas(\Sigma)$}
\Ensure{$\top$ if and only if $\varphi$ is universally true; otherwise infinite loop}

\State{$\varphi^H \leftarrow$ Herbrand normal form of $\varphi$}
\State{$\varphi_1,\varphi_2,\dots \leftarrow$ an enumeration of $H(\varphi^H)$}\label{line enumeration}
\State{$\psi_1 \leftarrow \varphi_1$}
\State{$\psi_1^\text{Ack} \leftarrow$ Ackermann's reduction of $\psi_1$ removing all non-arith.\ function symbols}
\For{$n \leftarrow 1,2\dots$}
	\For{$k \leftarrow 1,\dots,n$}
	\If{$\Id(\psi_{k}^\text{Ack}) \equiv \top$ can be verified with $n$ steps of an ideal membership semi\nobreakdash-decision procedure}\label{line ideal checks}
		\textbf{return } $\top$
	\EndIf
	\EndFor
	\If{$\varphi^H$ is a ground sentence}\Comment{in this case $H(\varphi^H) = \{\varphi^H\} = \{ \varphi_1  \}$}
		\State{$\psi_{n+1}^\text{Ack} \leftarrow \psi_{n}^\text{Ack}$}
	\Else
		\State{$\psi_{n+1} \leftarrow \psi_{n} \vee \varphi_{n+1}$}\label{line instance}
		\State{$\psi_{n+1}^\text{Ack} \leftarrow$ Ackermann's reduction of $\psi_{n+1}$ removing all non-arith.\ function symbols}
	\EndIf
    \EndFor
\end{algorithmic}
\end{algorithm}

\begin{remark}
We make some remarks about Procedure~\ref{algo semantic consequence}.
\begin{enumerate}
	\item The procedure treats the case where the Herbrand normal form of the input formula is a ground sentence separately.
	In this case, the computation can be simplified since the Herbrand expansion is a singleton.
	Then $\psi_n^\text{Ack} = \varphi_1^\text{Ack} = (\varphi^H)^\text{Ack}$ for all $n$ since $\varphi^H$ is the only element in $H(\varphi^H)$.
	
	\item Line~\ref{line ideal checks} contains the term \emph{step of a procedure}. 
	Thereby we mean any (high- or low-level) set of instructions of the procedure that can be executed in \emph{finite time}.
	We note that the vagueness of the previous sentence is deliberate, because it is just a means to split a (possibly) infinite
	computation into parts that are guaranteed to be finite.
	The particular details on how this is done play no role in the correctness of Procedure~\ref{algo semantic consequence}.
	
	\item If $\varphi$ is not universally true, then the procedure runs into an infinite loop.
	However, it can be modified to also terminate in some cases when $\varphi$ is not universally true, see the discussion 
	immediately before Section~\ref{sec:AE-quasi-identities}.
\end{enumerate}
\end{remark}

\begin{theorem}\label{thm algo}
Procedure~\ref{algo semantic consequence} terminates and outputs $\top$ if and only if $\varphi$ is universally true.
\end{theorem}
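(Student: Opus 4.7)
The plan is a two-directional argument that mirrors the structure of the procedure, combining the earlier results. Soundness (an output of $\top$ implies $\AA \models \varphi$) will chain Theorem~\ref{thm nullstellensatz}, Corollary~\ref{cor ackermann}, and Proposition~\ref{prop herbrandisation}. Completeness ($\AA \models \varphi$ implies termination with $\top$) will use Corollary~\ref{cor herbrand} to extract a finite certificate and then argue that the dovetailed search in the nested loops must eventually uncover it.

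For soundness, suppose the procedure returns $\top$. Then for some outer index $n$ and some $k \leq n$, the verification in line~\ref{line ideal checks} certifies $\Id(\psi_k^\text{Ack}) = \top$. Theorem~\ref{thm nullstellensatz} gives $\AA \models \psi_k^\text{Ack}$. Since $\psi_k^\text{Ack}$ is produced from $\psi_k$ by Ackermann's reduction of non-arithmetic symbols that do not occur in $\AA$, Corollary~\ref{cor ackermann} yields $\AA \models \psi_k$. Now $\psi_k = \bigvee_{i=1}^{k} \varphi_i$ with each $\varphi_i$ a ground instance of the matrix of $\varphi^H = \exists x_1,\dots,x_n\colon \varphi^*$, and each such instance logically implies $\varphi^H$; hence $\AA \models \varphi^H$. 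Proposition~\ref{prop herbrandisation} (the $\AA$-equivalence of $\varphi$ and $\varphi^H$) then delivers $\AA \models \varphi$.

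For completeness, assume $\AA \models \varphi$. Proposition~\ref{prop herbrandisation} first gives $\AA \models \varphi^H$. Since $\AA$ is in Skolem normal form and $\varphi^H$ in Herbrand normal form, Corollary~\ref{cor herbrand} yields ground instances $\varphi_{i_1},\dots,\varphi_{i_l}\in H(\varphi^H)$ (together with some $H(\AA)$-instances) witnessing the consequence; in particular $\AA \models \varphi_{i_1} \vee \dots \vee \varphi_{i_l}$. Setting $N = \max\{i_1,\dots,i_l\}$, the disjunction $\psi_N$ contains all of these, so $\AA \models \psi_N$, and by Corollary~\ref{cor ackermann} also $\AA \models \psi_N^\text{Ack}$. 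Theorem~\ref{thm nullstellensatz} then gives $\Id(\psi_N^\text{Ack}) = \top$. By Proposition~\ref{prop ideal membership}, each of the finitely many ideal memberships constituting this idealisation is witnessed by the semi-decision procedure in some finite number of steps; let $M$ be an upper bound on the total number of operations needed. Then at the outer iteration $n = \max(N,M)$ the inner check at $k = N$ succeeds within $n$ operations, and the procedure returns $\top$. The ground-sentence branch is handled by the same argument with $N = 1$.

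The main obstacle is ensuring that the dovetailing truly yields a semi-decision procedure. If one committed to fully verifying $\Id(\psi_k^\text{Ack})$ for each $k$ in turn, a $k$ with false idealisation could absorb unbounded time (ideal non-membership is undecidable), blocking progress to a larger $k$ where the idealisation holds. Interleaving the budget ``$n$ operations'' with the index $k$ in the inner loop ensures that every pair (candidate index, verification length) is tried in finite time, so the minimal witness guaranteed by Corollary~\ref{cor herbrand} together with the bounded number of ideal-membership steps from Proposition~\ref{prop ideal membership} is eventually detected. Once this accounting is set up, the theorem follows from the forward-chained application of the previously established equivalences.
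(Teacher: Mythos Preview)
Your proof is correct and follows essentially the same route as the paper: chain Proposition~\ref{prop herbrandisation}, Corollary~\ref{cor herbrand}, Corollary~\ref{cor ackermann}, and Theorem~\ref{thm nullstellensatz} to equate $\AA \models \varphi$ with $\Id(\psi_N^{\text{Ack}}) = \top$ for some $N$, then invoke Proposition~\ref{prop ideal membership} and the dovetailing structure for termination. Your version is simply more explicit than the paper's (particularly about why $\psi_k$ implies $\varphi^H$ and about the bound $n = \max(N,M)$ for the interleaved search), but the argument is the same.
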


\begin{proof}
Recall that $\varphi$ being universally true means that $\Axioms \models \varphi$.
Also, note that $\Axioms$ is a set of sentences in Skolem normal form.
Thus, by Proposition~\ref{prop herbrandisation} and Theorem~\ref{thm:herbrand}, $\Axioms \models \varphi$ if and only if $\Axioms \models \psi_N$ for some large enough $N \in \N$.
Also, note that $\psi_N$ is a quantifier-free operator statement.
Hence, Proposition~\ref{prop ackermann cons} yields that $\Axioms \models \varphi$ is equivalent to $\Axioms \models \psi_N^\text{Ack}$.
Finally, since $\psi_N^\text{Ack}$ is an arithmetic operator statement, Theorem~\ref{thm nullstellensatz} implies 
that $\Axioms \models \varphi$ is equivalent to $\Id(\psi_N^\text{Ack}) \equiv \top$.
To summarise, $\Axioms \models \varphi$ is equivalent to the condition in line~\ref{line ideal checks} being satisfied at some point.
This shows that, if the algorithm terminates, then $\Axioms \models \varphi$.
The other implication follows from Proposition~\ref{prop ideal membership} and the fact that the computations for each $n$ can be executed in a finite amount of time.
\end{proof}

In the following, we give a partial answer to the question ``When can we prove that $\varphi$ is \emph{not} universally true?''
or in other words ``When does $\varphi$ \emph{not} follow from the axioms of preadditive semicategories?''.
We assume, without loss of generality, that $\varphi$ is in Herbrand normal form. 
First, we note that if $H(\varphi)$ is infinite, then it is not possible to deduce that $\varphi$ is not universally true using our approach
as this would require to argue over all (infinitely many) finite subsets of $H(\varphi)$.
If $H(\varphi)$ is not infinite, then $H(\varphi) = \{\varphi\}$. 
In this case, proving that $\varphi$ is not universally true boils down to verifying that $\Id(\varphi^\text{Ack}) \equiv \bot$, which is the case if and only if certain ideal memberships do not hold.
Although generally impossible, there are cases in which ideal membership can be disproven (for example, when the ideal is homogeneous or admits a finite Gr\"obner basis). 
In these situations, based on Theorem~\ref{thm nullstellensatz}, we can prove that $\varphi$ is not universally true.

\subsection{Arithmetic $\forall\exists$-quasi-identities}
\label{sec:AE-quasi-identities}

A class of operator statements that we commonly encounter in practice, and for which Procedure~\ref{algo semantic consequence} can be simplified drastically, 
is the following class of \emph{arithmetic $\forall\exists$-quasi-identities}.
For the following definition and theorem, we fix a signature $\Sigma = (\bm{O} \times \bm{O},\bm{C},\bm{F},\sigma)$.

\begin{definition}
An \emph{arithmetic $\forall\exists$-quasi-identity} is an operator statement of the form
\begin{align*}
	\forall \bm{x} \exists \bm{y} : \bigwedge_{i=1}^{m} s_{i}(\bm{x}) \approx t_{i}(\bm{x}) \rightarrow p(\bm{x},\bm{y}) \approx q(\bm{x},\bm{y}) \in \Sent(\Sigma),
\end{align*}
with terms $s_{i}(\bm{x}), t_{i}(\bm{x}), p(\bm{x},\bm{y}), q(\bm{x},\bm{y})$ that only contain arithmetic function symbols.
\end{definition}

\begin{remark}
In the definition of arithmetic $\forall\exists$-quasi-identities, we also allow degenerate cases without universally or existentially quantified variables, that is, where $\bm{x} = \emptyset$ or $\bm{y} = \emptyset$.
However, note that an arithmetic $\forall\exists$-quasi-identity has to be a sentence, meaning that all variables that appear have to be quantified by a quantifier.
\end{remark}

The following theorem shows that, for arithmetic $\forall\exists$-quasi-identities, Procedure~\ref{algo semantic consequence} essentially reduces to a single polynomial ideal membership.

\begin{theorem}\label{thm:AE-identities}
An arithmetic $\forall\exists$-quasi-identity
\begin{align*}
	\forall \bm{x} \exists \bm{y} : \bigwedge_{i=1}^{m} s_{i}(\bm{x}) \approx t_{i}(\bm{x}) \rightarrow p(\bm{x},\bm{y}) \approx q(\bm{x},\bm{y}),
\end{align*}
where $\bm{y} = y_{1},\dots,y_{k}$, is universally true if and only if there exist terms $\bm{z} = z_{1}(\bm{x}),\dots,z_{k}(\bm{x})$ depending only on $\bm{x}$ such that $\sigma(z_{j}) = \sigma(y_{j})$, for all $j =1,\dots,k$, and such that the ideal membership
\begin{align*}
	p(\bm{x},\bm{z}) - q(\bm{x},\bm{z}) \in \big( s_{1}(\bm{x}) - t_{1}(\bm{x}), \dots, s_{m}(\bm{x}) - t_{m}(\bm{x}) \big)
\end{align*}
holds in the free algebra $\Z\<\bm{x}>$.
\end{theorem}

\begin{proof}
We show that applying Procedure~\ref{algo semantic consequence} to an arithmetic $\forall\exists$-quasi-identity corresponds precisely to 
checking the condition in the theorem.
First, note that the Herbrand normal form of an arithmetic $\forall\exists$-quasi-identity, is the formula itself, 
only with the universal quantifier removed -- here we assume, without loss of generality,
that the new constant symbols are given the same names as the variables $\bm{x}$ they replace.
Furthermore, the Herbrand expansion of this normal form is the set of all formulas of the form 
\[
	\bigwedge_{i=1}^{m} s_{i}(\bm{x}) \approx t_{i}(\bm{x}) \rightarrow p(\bm{x},\bm{z}) \approx q(\bm{x},\bm{z})
\]
with ground terms $\bm{z} = z_{1}(\bm{x}),\dots,z_{n}(\bm{x})$ that may involve the constants $\bm{x}$ and satisfy $\sigma(z_{j}) = \sigma(y_{j})$ for all $j =1,\dots,k$.
Note that the symbols $\bm{x}$ now have to be considered as constant symbols, as the variables have been replaced during Herbrandisation, and thus, the formulas in the Herbrand expansion are indeed ground.
Then, Procedure~\ref{algo semantic consequence} has to verify whether the idealisation $\Id(\varphi_{1} \vee \dots \vee \varphi_{n})$ is true for some $n \in \N$ and elements $\varphi_{l}$ in the Herbrand expansion, $l = 1,\dots,n$.
Note that Ackermann's reduction is not required because arithmetic $\forall\exists$-quasi-identities only contain arithmetic function symbols by definition.
If we write each $\varphi_{l} = \bigwedge_{i=1}^{m} s_{i}(\bm{x}) \approx t_{i}(\bm{x}) \rightarrow p(\bm{x},\bm{z_{l}}) \approx q(\bm{x},\bm{z_{l}})$, then a conjunctive normal form of $\varphi_{1} \vee \dots \vee \varphi_{n}$ is given by the single clause
\[
	\bigvee_{i=1}^{m} s_{i}(\bm{x}) \not\approx t_{i}(\bm{x}) \vee \bigvee_{l=1}^{n} p(\bm{x},\bm{z_{l}}) \approx q(\bm{x},\bm{z_{l}}).
\]
Finally, the idealisation of this clause is true if and only if the ideal membership 
\[
	p(\bm{x},\bm{z_{l}}) - q(\bm{x},\bm{z_{l}}) \in \left(s_{1}(\bm{x}) - t_{1}(\bm{x}), \dots, s_{m}(\bm{x}) - t_{m}(\bm{x})\right)
\]
holds for some $1 \leq l \leq n$. \qedhere
\end{proof}

\subsection{Computational aspects}\label{sec computational aspects}

The performance of Procedure~\ref{algo semantic consequence} strongly depends on the order in which the elements in the Herbrand expansion are enumerated.
In the following, we present a few basic techniques to find appropriate instances needed for a verification that an operator statement $\varphi$ is universally true that turned out to be useful in practice.
We also discuss other aspects relevant for applying the procedure in practice. 
Some of these techniques deviate quite strongly from how Procedure~\ref{algo semantic consequence} is phrased, but they can have a drastic impact on the efficiency of the procedure.

In practical applications, the operator statement $\varphi$ under consideration is typically an implication of the form $\varphi = \bigwedge_{i} \alpha_i \rightarrow \gamma$,
where $\alpha_i$ represent different assumptions and $\gamma$ forms a claimed property. 
In the following, we assume that $\varphi$ is of this form.

Universal identities are sentences of the form $\forall \bm{x} : s(\bm{x}) \approx  t(\bm{x})$ where $s,t$ are terms depending on the variables $\bm{x} = x_1,\dots,x_n$.

Such formulas often describe very basic properties (such as linearity/multiplicativity of a function symbol, commutativity of certain operators, etc.). 
If an $\alpha_{i}$ is a universal identity, then, formally, each application of such an assumption in a proof requires a new instantiation of the formula $\varphi$.
This increases the size of the resulting formula, making further computations more involved.
To avoid this problem, we can remove the assumption $\forall \bm{x} : s(\bm{x}) \approx  t(\bm{x})$ from $\varphi$ and
treat it implicitly by considering a universal rewrite rule $s \mapsto t$ that maps every instantiation $s(t_1,\dots,t_n)$ to $t(t_1,\dots,t_n)$.
This rewrite rule is then applied exhaustively to all formulas that occur during the computation.
We note that this approach need not be complete, but it is definitely sound, and in practice we were, thus far, still always able to prove the desired statement using this method.

Next, we discuss situations where the claim $\gamma$ is an existential identity, that is, where $\gamma$ is a sentence of the form $\exists \bm{y} : p(\bm{y}) \approx q(\bm{y})$.
Such formulas can describe, for instance, the existence of a solution to an equation and finding suitable instantiations for them is usually a hard task.
In our setting, however, appropriate instantiations can often be found automatically with the help of algebraic techniques.
For example, in the setting of Theorem~\ref{thm:AE-identities}, finding a suitable instantiation corresponds to finding a polynomial 
\begin{align*}
	p(\bm{x},\bm{z}) - q(\bm{x},\bm{z}) \in \big( s_{1}(\bm{x}) - t_{1}(\bm{x}), \dots, s_{m}(\bm{x}) - t_{m}(\bm{x}) \big),
\end{align*}
with unknown terms $\bm{z} = z_{1}(\bm{x}), \dots, z_{k}(\bm{x})$. 
If something about the structure of $p(\bm{x},\bm{z}) - q(\bm{x},\bm{z})$ is known, for example, a factor in a factorisation or if the sought-after element is known to be homogeneous,
then dedicated algebraic techniques, mostly based on Gr\"obner basis methods, can be used to systematically search for such polynomials in an ideal.
We refer to~\cite{HRR22} for an explanation of these methods, along with illustrative examples of successfully automated proofs of operator statements involving existential claims.

Once suitable instantiations have been found, they are combined disjunctively into one formula and Ackermann's reduction is performed to remove non-arithmetic function symbols.
It is worth noting that, in automated reasoning, the congruence closure algorithm~\cite[Sec.~4.1]{KS16} is typically favoured over Ackermann's reduction for handling equality with function symbols in the ground case.
Unfortunately, applying congruence closure straightforwardly presents challenges in our context.
The problem arises from a crucial step in the congruence closure algorithm that requires deciding equality of ground terms.
In our application, we also have to take into consideration the axioms $\AA$ of preadditive semicategories when performing congruence closure, 
which implies the need to decide equality of ground terms \emph{modulo the axioms $\AA$}.
It is not clear to us how the congruence closure algorithm could be adapted to do this efficiently, allowing to exploit our algebraic setting.

Therefore, we have decided to work with Ackermann's reduction, as this technique applies to our setting without any modifications.
In practice, however, we made the observation that combining Ackermann's reduction with a very rudimentary version of congruence closure can be beneficial.
Thus, in practice, when an operator statement $\varphi$ contains a unary non-arithmetic function symbol $f$, we, as a first step, extend assumptions of the form
\[
	\bigwedge_{i} s_i \approx t_i \rightarrow \bigvee_{j} p_j \approx q_j
\]
to
\[
	\bigwedge_{i} s_i \approx t_i \rightarrow \bigvee_{j}\ (p_j \approx q_j \wedge f(p_j) \approx f(q_j)),
\]
where $f$ is applied to all terms of the correct sort.
In particular, plain identities $p \approx q$ can be extended to $p \approx q \wedge f(p) \approx f(q)$. 
This process is clearly sound and often helps to automatically generate appropriate function instantiations, speeding up the computation in this way.

When forming the idealisation of the resulting arithmetic operator statement $\varphi$, the formula has to be transformed into CNF.
This transformation can lead to an exponential increase in size and to exponentially many ideal membership verifications during the idealisation.
Classically in automated theorem proving, this exponential blow-up is avoided by a so-called Tseitin transformation, see, for example,~\cite[Sec.~1.3]{KS16}, which transforms a formula into a new formula in CNF by introducing new variables, causing only a linear increase in size.
However, while this process preserves satisfiability, it need not preserve validity, the notion we are interested in. 
Instead of trying to adapt the Tseitin transformation to our setting, we instead explain an approach based on an incremental computation of the idealisation. 
It follows from the following observation.
If $\varphi = (\alpha \vee \beta) \rightarrow \gamma$ is an arithmetic operator statement, then 
\[
	\Id(\varphi) = \Id(\lnot \alpha \vee \gamma) \wedge \Id(\lnot \beta \vee \gamma).
\]
In such situations, it can make sense to compute the idealisation incrementally. 
First, we form $\Id(\gamma)$ and if $\Id(\gamma) \equiv \top$, then also $\Id(\varphi) \equiv \top$ (by Lemma~\ref{lemma properties idealisation}) and we are done.
Only if $\Id(\gamma) \equiv \top$ cannot be verified fast enough (i.e., within some fixed time frame), we compute $\Id(\lnot \alpha \vee \gamma)$ and $\Id(\lnot \beta \vee \gamma)$.
The main advantage of this idea is that it can be applied recursively in case of several assumptions (see Figure~\ref{fig implications} for an illustration).
This is because $\CNF(\bigwedge_i \alpha_i \rightarrow \gamma)$ and $\CNF(\alpha_1 \rightarrow (\alpha_2 \rightarrow \dots \rightarrow (\alpha_m \rightarrow \gamma) \dots ))$ 
are the same, and thus, lead to the same idealisation.
While, in the worst case, this technique still requires exponentially many ideal membership tests, it in practice usually allows to drastically reduce the number of polynomial computations. 

\def\vsep{-2}
\def\hsep{2}
\begin{figure}
\centering
\begin{tikzpicture}
	\node at (0,0)			 {$\Id(\gamma) \equiv\ ?$};
	
	\node at (0-1.8*\hsep,\vsep) 	{$\Id(\lnot \alpha_1 \vee \gamma) \equiv \top$};
	\node at (0+1.8*\hsep,\vsep) 	{$\Id(\lnot\beta_1 \vee \gamma) \equiv\ ?$};
	
	\node at (-1.8*\hsep - 0.8*\hsep,2*\vsep)  {\color{gray}$\begin{aligned}\Id(\lnot\alpha_1 &\vee \lnot\alpha_2 \vee \gamma) \\&\equiv \top\end{aligned}$};
	\node at (-1.8*\hsep + 0.8*\hsep,2*\vsep)  {\color{gray}$\begin{aligned}\Id(\lnot\alpha_1 &\vee \lnot\beta_2 \vee \gamma) \\&\equiv \top\end{aligned}$};
		 
	\node at (1.8*\hsep - 0.8*\hsep,2*\vsep)  {$\begin{aligned}\Id(\lnot\beta_1 &\vee \lnot\alpha_2 \vee \gamma) \\&\equiv \top\end{aligned}$};
	\node at (1.8*\hsep + 0.8*\hsep,2*\vsep) {$\begin{aligned}\Id(\lnot\beta_1 &\vee \lnot\beta_2 \vee \gamma) \\&\equiv \top\end{aligned}$};
	
	\node[color=gray] at (-2.6*\hsep - 0.4*\hsep,2.6*\vsep)  {\dots};
	\node[color=gray] at (-2.6*\hsep + 0.4*\hsep,2.6*\vsep) {\dots};
	\node[color=gray] at (-1*\hsep - 0.4*\hsep,2.6*\vsep)  {\dots};
	\node[color=gray] at (-1*\hsep + 0.4*\hsep,2.6*\vsep) {\dots};
	\node[color=gray] at (1*\hsep - 0.4*\hsep,2.6*\vsep)  {\dots};
	\node[color=gray] at (1*\hsep + 0.4*\hsep,2.6*\vsep) {\dots};
	\node[color=gray] at (2.6*\hsep - 0.4*\hsep,2.6*\vsep)  {\dots};
	\node[color=gray] at (2.6*\hsep + 0.4*\hsep,2.6*\vsep) {\dots};
			
	\draw[->] (0-0.3*\hsep,0+0.3*\vsep) to (0-1.2*\hsep,0+0.7*\vsep);
	\draw[->] (0+0.3*\hsep,0+0.3*\vsep) to (0+1.2*\hsep,0+0.7*\vsep);
	
	\draw[color=gray,->] (-1.8*\hsep - 0.2*\hsep,0+1.3*\vsep) to (-1.8*\hsep - 0.8*\hsep ,0+1.7*\vsep);
	\draw[color=gray,->] (-1.8*\hsep + 0.2*\hsep,0+1.3*\vsep) to (-1.8*\hsep + 0.8*\hsep, 0+1.7*\vsep);
	
	\draw[->] (1.8*\hsep - 0.2*\hsep, 0+1.3*\vsep) to (1.8*\hsep - 0.8*\hsep, 0+1.7*\vsep);
	\draw[->] (1.8*\hsep + 0.2*\hsep, 0+1.3*\vsep) to (1.8*\hsep + 0.8*\hsep, 0+1.7*\vsep);
	
	\draw[color=gray,->] (-1.8*\hsep - 0.2*\hsep,0+1.3*\vsep) to (-1.8*\hsep - 0.8*\hsep ,0+1.7*\vsep);
	\draw[color=gray,->] (-1.8*\hsep + 0.2*\hsep,0+1.3*\vsep) to (-1.8*\hsep + 0.8*\hsep, 0+1.7*\vsep);
	
	\draw[color=gray,->] (-2.6*\hsep - 0.1*\hsep,2.3*\vsep) to (-2.6*\hsep - 0.3*\hsep ,2.5*\vsep);
	\draw[color=gray,->] (-2.6*\hsep + 0.1*\hsep,2.3*\vsep) to (-2.6*\hsep + 0.3*\hsep, 2.5*\vsep);
	\draw[color=gray,->] (-1*\hsep - 0.1*\hsep,2.3*\vsep) to (-1*\hsep - 0.3*\hsep ,2.5*\vsep);
	\draw[color=gray,->] (-1*\hsep + 0.1*\hsep,2.3*\vsep) to (-1*\hsep + 0.3*\hsep, 2.5*\vsep);
	\draw[color=gray,->] (1*\hsep - 0.1*\hsep,2.3*\vsep) to (1*\hsep - 0.3*\hsep ,2.5*\vsep);
	\draw[color=gray,->] (1*\hsep + 0.1*\hsep,2.3*\vsep) to (1*\hsep + 0.3*\hsep, 2.5*\vsep);
	\draw[color=gray,->] (2.6*\hsep - 0.1*\hsep,2.3*\vsep) to (2.6*\hsep - 0.3*\hsep ,2.5*\vsep);
	\draw[color=gray,->] (2.6*\hsep + 0.1*\hsep,2.3*\vsep) to (2.6*\hsep + 0.3*\hsep, 2.5*\vsep);
	
\end{tikzpicture}
\caption{Illustration on how to incrementally compute the idealisation of $\left((\alpha_1 \vee \beta_1) \wedge (\alpha_2 \vee \beta_2) \wedge (\alpha_3 \vee \beta_3)\right) \rightarrow \gamma$.
Grey colored computations are avoided.
Here, the question mark ``?'' indicates that the idealisation could not be verified to be true within some fixed time frame.}
\label{fig implications}
\end{figure}
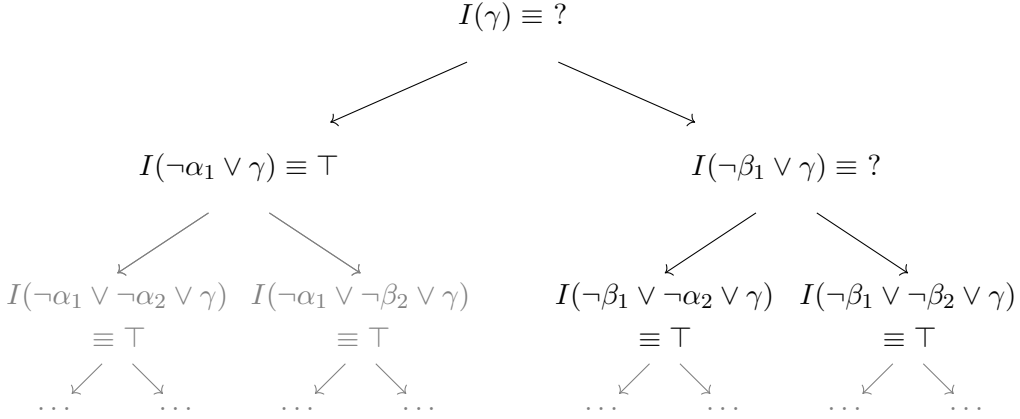

No matter whether the optimisation discussed above is used or not, the crucial step of Procedure~\ref{algo semantic consequence} requires the verification of 
finitely many ideal memberships.
We note that such a polynomial computation is independent of the involved terms' sorts, providing a clear advantage over other verification approaches (such as sequent calculi)
that require additional checks to ensure that the sort restrictions are respected at every step.
Additionally, ideal membership $f \in (f_1,\dots,f_r)$ can be certified easily by providing a representation of $f$ as a linear combination of the generators $f_1,\dots,f_r$.
Such a certificate can be computed automatically and serves as a proof that can be checked easily and independently of how it was obtained.
We refer to~\cite{HRR19,Hof20} on how such certificates can be computed.


\section{Applications}\label{sec:applications}

In the different applications described in this section, we mostly deviate from Procedure~\ref{algo semantic consequence} 
and use the approaches discussed in Section~\ref{sec computational aspects} in order to speed up the computations.
In particular, we make extensive use of algebraic techniques for finding appropriate instantiations of existentially quantified variables. 
Also, many statements that arise in the applications can be phrased as arithmetic $\forall\exists$-quasi-identities and can thus be treated using Theorem~\ref{thm:AE-identities}. 

\subsection{Introductory example -- existence of the Moore-Penrose inverse}

As an introductory example to illustrate the workings of the framework,
we consider a simple statement about the existence of the Moore-Penrose inverse for morphisms in involutive preadditive semicategories.
In this setting, a morphism need not have a Moore-Penrose inverse.
The following part of~\cite[Lem.~3]{PR81} provides a sufficient condition for its existence.

\begin{lemma}\label{lemma MP-inverse}
Let $A\colon U \to V$ be a morphism in an involutive preadditive semicategory.
If there exist morphisms $P,Q \colon U \to V$ such that $PA^{*}A = A = AA^{*}Q$, then $A$ has a Moore-Penrose inverse.
\end{lemma}

In the following, we describe how to prove Lemma~\ref{lemma MP-inverse} using the framework introduced in this work and Procedure~\ref{algo semantic consequence}.

\paragraph{Step 1: Choosing a signature}

As a first step, we have to choose a suitable signature.
To this end, we fix distinct $u,v \in \ObS$ and consider $\Sigma = (\bm{O} \times \bm{O},\bm{C},\bm{F},\sigma)$ with
\begin{enumerate}
	\item $\bm{O} = \{u,v\}$;
	\item $\bm{C} = \{0_{u,u}, 0_{u,v}, 0_{v,u}, 0_{v,v} \}$;
	\item $\bm{F} = \{{}^*_{u,u}, {}^*_{u,v}, {}^*_{v,u}, {}^*_{v,v}\}  \cup \{-_{\alpha,\beta}, +_{\alpha,\beta}, \cdot_{\alpha,\beta,\gamma} \mid \alpha, \beta, \gamma \in \bm{O}\} $;
	\item $\sigma$ satisfies the conditions mentioned in the beginning of Section~\ref{sec:operator-statements-and-universal-truth} (assigning the expected sorts to zero constants and arithmetic function symbols);
	\item $\sigma$ satisfies $\sigma({}^*_{\alpha,\beta}) = (\alpha,\beta) \to (\beta,\alpha)$ for $\alpha,\beta \in \bm{O}$
\end{enumerate}

Besides the required zero constants and arithmetic function symbols, the signature contains function symbols ${}^*_{u,u}, {}^*_{u,v}, {}^*_{v,u}, {}^*_{v,v}$ representing the involution. 
To simplify the notation, we will denote these function symbols all by the same symbol ${}^*$ in the following.
We also require $\sigma$ to assign the expected sorts to these function symbols.  

\paragraph{Step 2: Translating the statement}

After fixing a signature, we can translate the statement about morphisms into a first-order formula.
To keep the notation uncluttered, we omit the sorts of the variables in the formula.
We note that $\sigma(a) = \sigma(p) = \sigma(q) = (u,v)$ and $\sigma(x) = (v,u)$.

In the following, we write $st$ for $s \cdot t$ to keep the expressions shorter.
Then Lemma~\ref{lemma MP-inverse} can be translated into the formula
\[
	\varphi = \forall a,p,q \,\exists x : \varphi_\text{Asm} \rightarrow \varphi_\text{MP},
\]
where
\begin{align*}
	\varphi_\text{Asm} &= pa^{*}a \approx a  \;\wedge\; aa^{*}q \approx a  \;\wedge\; (pa^{*}a)^{*} \approx a^{*}  \;\wedge\; (aa^{*}q)^{*} \approx a^{*}\\
	\varphi_\text{MP} &= a x a \approx a \;\wedge\; xax \approx x \;\wedge\; (ax)^* \approx ax \;\wedge\; (xa)^* \approx xa.
\end{align*}

The formula consists of the assumptions $\varphi_\text{Asm}$, encoding the identities $PA^{*}A = A = AA^{*}Q$,
and of the claimed property $\varphi_\text{MP}$, expressing the existence of a Moore-Penrose inverse of $A$.

Formally, the assumptions should also include the properties~\eqref{eq:involution} of the involution function symbols.
Since these properties are very basic universal identities, we, as described in Section~\ref{sec computational aspects}, omit them and instead 
consider the universal rewrite rules $(t^*)^* \mapsto t$ and $(st)^* \mapsto t^* s^*$ that we use to simplify all occurring terms.
Additionally, we have added the adjoint identities $(pa^{*}a)^{*} \approx a^{*} \wedge (aa^{*}q)^{*} \approx a^{*}$ to our assumptions encoding $(PA^{*}A)^{*} = A^{*} = (AA^{*}Q)^{*}$.

\paragraph{Step 3: Applying Procedure~\ref{algo semantic consequence}}

Since $\varphi$ is already a sentence in prenex normal form, we can immediately start with the Herbrandisation to remove the universal quantifier.
In this process, we replace the variables $a,p,q$ by new constant symbols, which we denote for simplicity also by $a,p,q$.
This yields the Herbrand normal form
\[
	\varphi^H = \exists x : \varphi_\text{Asm} \rightarrow \varphi_\text{MP}.
\]

Then we consider an enumeration $\varphi_1,\varphi_2,\dots$ of the Herbrand expansion $H(\varphi^H)$ where
\begin{align}\label{eq instantiation}
	\varphi_1 &= (\varphi_\text{Asm} \rightarrow \varphi_\text{MP})[x \mapsto q^{*} a p^{*}]
\end{align}
and set $\psi_1 = \varphi_1$.

\begin{remark}
Without any prior knowledge, one would most likely choose a very different enumeration of $H(\varphi^H)$,
for example, instantiating the variable $x$
first only with constants and then successively use more complex terms. 
In order to avoid unnecessary computations, we use an enumeration that starts with the appropriate instantiation.
 
We note that, in this example, the appropriate instantiation of $x$ can be found automatically using algebraic techniques.
For example, 
one can compute a Gr\"obner basis of the ideal generated by all identities appearing in $\varphi_\text{Asm} \rightarrow \varphi_\text{MP}$ w.r.t.\ an elimination ordering for $x$.
This Gr\"obner basis then contains an element of the form $x - q^{*}ap^{*}$ indicating that $x \mapsto q^* a p^*$ is the appropriate instantiation.
We refer to~\cite[Sec.~3]{casc-2023}, where this approach and the corresponding computations are described in more detail.
\end{remark}

Now, Ackermann’s reduction is used to remove the function symbol ${}^*$ from $\psi_1$.
After applying the rewrite rules $(st)^* \mapsto t^* s^*$, $(t^*)^* \mapsto t$ exhaustively, the 
only instances of this function symbol are $a^*, p^*, q^*$.
We replace these terms by new constants, which we denote by the same symbols.
This yields a new formula 
\[
	\psi_1^\text{flat} = \psi_{\text{Asm}} \rightarrow \psi_{\text{MP}},
\]
where
\begin{align*}
	\psi_\text{Asm} &= pa^{*}a \approx a  \;\wedge\; aa^{*}q \approx a  \;\wedge\; a^{*}ap^{*} \approx a^{*}  \;\wedge\; q^{*}aa^{*} \approx a^{*}\\
	\psi_\text{MP} &= a q^{*}ap^{*} a \approx a \;\wedge\; q^{*}ap^{*}aq^{*}ap^{*} \approx q^{*}ap^{*} \\
				&\wedge\; pa^{*}qa^{*} \approx aq^{*}ap^{*} \;\wedge\; a^*pa^{*}q \approx q^{*}ap^{*}a.
\end{align*}

Furthermore, we add the functional consistency constraints
\begin{align*}
	\psi_1^\text{FC} =\; (a \approx p \rightarrow a^* \approx p^*) \wedge (a \approx q \rightarrow a^* \approx q^*) \wedge (p \approx q \rightarrow p^* \approx q^*).
\end{align*}
The resulting formula is $\psi_1^\text{Ack} = \psi_1^\text{FC} \rightarrow \psi_1^\text{flat}$.

Directly computing the idealisation of $\psi_1^\text{Ack}$ would require 32 ideal membership tests.
To reduce the number of polynomial computations, we apply the optimisation discussed in Section~\ref{sec computational aspects} and compute $\Id(\psi_1^\text{Ack})$ incrementally. 
Every implication $\alpha \rightarrow \beta$ corresponds to a disjunction $\lnot \alpha \vee \beta$.
So, we remove all implications from the assumptions, which means removing $\psi_1^\text{FC}$ and considering only $\psi_1^\text{flat}$.
We form the CNF of the simplified formula, yielding the 4 clauses:
\begin{align*}
	C_1 &=  \psi_{\text{Asm}} \vee a q^{*}ap^{*} a \approx a, \\
	C_2 &=  \psi_{\text{Asm}} \vee q^{*}ap^{*}aq^{*}ap^{*} \approx q^{*}ap^{*}, \\
	C_3 &=  \psi_{\text{Asm}} \vee  pa^{*}qa^{*} \approx aq^{*}ap^{*}, \\
	C_4 &=  \psi_{\text{Asm}} \vee  a^*pa^{*}q \approx q^{*}ap^{*}a.
\end{align*}
As can be seen, we obtain one clause corresponding to each of the 4 identities in $\psi_\text{MP}$.

It turns out that the idealisations of all four clauses $C_1,\dots,C_4$ are true.
For example, the representation 
\begin{align*}
	a q^{*}ap^{*} a - a &= (pa^{*}a - a)(1 - p^{*}a) + (1-aq^{*})(aa^{*}q - a)p^{*}a \\
					&+ p(a^{*}ap^{*} - a^{*})a + a(q^{*}aa^{*} - a^{*})qp^{*}a			
\end{align*}
shows that
\begin{align*}
	a q^{*}ap^{*} a - a \in \left(pa^{*}a - a, aa^{*}q - a, a^{*}ap^{*} - a^{*}, q^{*}aa^{*} - a^{*} \right),
\end{align*}
and thus certifies $\Id(C_{1}) \equiv \top$.

Similarly, it can be certified that also the other 3 idealisations are true.
Therefore, by definition, $\Id(\psi_1^\text{flat}) \equiv \top$, and thus also $\Id(\psi_{1}^{\text{Ack}}) \equiv \top$.
This finishes the proof of Lemma~\ref{lemma MP-inverse}.
Notably, this approach required only 4 ideal membership tests compared to 32 if we had translated $\psi_1^\text{Ack}$ directly.
We note that all required polynomial computations are rather simple and lead to ideal membership certificates of similar size and complexity as the one shown above.

\subsection{Moore-Penrose case study}

Two of the authors have conducted a case study to illustrate the effectivity of the framework in combination with our computer algebra package \texttt{operator\_gb} on statements regarding the Moore-Penrose inverse~\cite{casc-2023}.
In the following, we largely reproduce the discussion in~\cite[Sec.~6]{casc-2023}.

In this case study, the proofs of a variety of theorems on the Moore\nobreakdash-Penrose inverse have been automated, ranging from classical facts in the Handbook of Linear Algebra~\cite[Sec.~I.5.7]{hogben2013handbook} over important characterisations of the Moore-Penrose inverse $(AB)^{\dag}$ of a product of linear operators $A,B$ on Hilbert spaces~\cite{DD10} to very recent improvements~\cite{cvetkovic2021algebraic} of a classical result by Hartwig~\cite{Har86} that were found with the help of our software.
A Jupyter notebook containing the automated proofs of all statements is available at
\begin{center}
\url{https://cocalc.com/georeg/Moore-Penrose-case-study/notebook}.
\end{center}

The case study covers the first 25 facts in the section on the Moore-Penrose inverse in the Handbook of Linear Algebra~\cite[Sec.~I.5.7]{hogben2013handbook}.
Among these 25 statements, five cannot be treated within the framework as they contain properties that cannot be expressed in terms of algebraic identities of operators.
These are, for example, properties of matrix entries or analytic statements involving limits.
Additionally, three statements can only be partially handled for the same reason.
The remaining 17 statements, along with those parts of the three statements mentioned before that can be treated within the framework, can all be translated into statements about polynomial ideal memberships.
Using our software, these statements are proven fully automatically.
The corresponding polynomial computations take place in ideals generated by up to 70 polynomials in up to 18 indeterminates.
The proof of each statement takes less than one second, and the computed representations, certifying the required ideal memberships, consist of up to 226 terms.

The case study also includes Theorems~2.2 -- 2.4 in~\cite{DD10}, which characterise situations when the reverse order law $(AB)^\dag = B^\dag A^\dag$ of bounded linear operators $A,B$ on Hilbert spaces holds.
Using our software, these theorems can be automatically proven in less than five seconds altogether, generating algebraic proofs consisting of up to 279 terms.
It is noteworthy that, unlike the original proofs in~\cite{DD10}, which rely on matrix forms of bounded linear operators that are induced by some decompositions of Hilbert spaces, these algebraic proofs do not require any structure on the underlying spaces besides a certain cancellability assumption and the property that inclusions of ranges $\mathcal{R}(A) \subseteq \mathcal{R}(B)$ of operators $A,B$ can be translated into the existence of a factorisation $A = BX$, for some operator $X$. 
In Hilbert and Banach spaces, this is the well-known factorisation property in Douglas' lemma~\cite{douglas}.
Thus, our algebraic proofs generalise the results from bounded operators on Hilbert spaces to morphisms in arbitrary preadditive semicategories meeting these assumptions.

Finally, the case study contains fully automated proofs of Theorem~2.3 and~2.4 in our joint work~\cite{cvetkovic2021algebraic}, which provide necessary and sufficient conditions for the triple reverse order law $(ABC)^\dag = C^\dag B^\dag A^\dag$ to hold, where $A,B,C$ are elements in a ring with involution.
These theorems can be considered as significant improvements of a classical result by Hartwig~\cite{Har86}. 
We discuss them in more detail in the succeeding section.

\subsection{Reverse order law}
\label{sec:ROL}

For a classical inverse in a group, it is easy to show that $(ab)^{-1} = b^{-1} a^{-1}$ for all group elements $a,b$.
In the setting of generalised inverses, this \emph{reverse order law} need no longer hold unconditionally.
The \emph{reverse order law problem}, originally posed by Greville~\cite{Gre66}, asks for necessary and sufficient conditions for the reverse order law to hold. 

Since Greville, who considered this problem in the case of the Moore-Penrose inverse of the product of two matrices,
the reverse order law has been an area of extensive research branching in several directions (products of more than two matrices, different classes of generalised inverses, different settings such as operator algebras, $C^*$-algebras, rings, etc.).
One important result is due to Hartwig~\cite{Har86}, who studied the reverse order law for the Moore-Penrose inverse of a product of three matrices.

In joint work with colleagues~\cite{cvetkovic2021algebraic}, we have generalised this result by Hartwig using partly automated polynomial computations based on the framework developed in~\cite{RRH21}.
Using the framework of this work, these new results can now be proven fully automatically with the help of our software package \texttt{operator\_gb}.
The automated proofs of these statements can be found in the Jupyter notebook mentioned in the previous section.

To give an impression of the size of the computation,
we note that the assumptions in the proof of~\cite[Thm.~2.3]{cvetkovic2021algebraic} consist of up to 24 identities in 22 variables 
and the certificates of the ideal memberships that are computed consist of up to 80 terms.
Using our software package, the polynomial computations to find the appropriate instantiations and to obtain the certificates take in total about 15 seconds on a regular laptop.

\subsection{Diagram chases}
\label{sec:diagram-chases}

Abelian categories form a special class of preadditive categories where kernels and cokernels of morphisms exist and have certain nice properties.
They are very important structures providing a natural setting for homological algebra and have many applications in pure category theory and algebraic geometry, see, for example,~\cite[Ch.~VIII]{Mac13} for a textbook exposition and further details.
In particular, many statements in homological algebra, for example, can be phrased as \emph{diagram lemmas} in abelian categories.
Commonly, these diagram lemmas are proven using a technique called \emph{diagram chasing} and different approaches have been developed to automate such computations~\cite{Him20,Pos22}.
The framework introduced in this paper encompasses the language of abelian categories and allows to automate proofs of diagram lemmas by computations with noncommutative polynomials.
We note that this work is, thus far, still very much in its beginnings. 

For example, a very central property that regularly appears in diagram lemmas is that of a sequence $U \xrightarrow{f} V \xrightarrow{g} W$ of morphisms $f\colon U \to V$ and $g \colon V \to W$ being \emph{exact}.
This property can be expressed in terms of the language of our framework by exploiting, for example, the characterisation below.
For a proof of this fact and for characterisations of further classical properties, we refer to~\cite[Prop.~7.3.2]{Hof23}.

\begin{proposition}
A sequence $U \xrightarrow{f} V \xrightarrow{g} W$ is exact if and only if $g \circ f = 0$ and, for all $x \colon T \to V$ with $g \circ x = 0$, there exist $y \colon S \to U$ and $e \colon S \to T$ with $e$ epi such that $f \circ y = x \circ e$
\end{proposition}

Using this characterisation, the exactness of a sequence $U \xrightarrow{f} V \xrightarrow{g} W$ can be translated into the operator statement
\begin{align}\label{eq:exactness}
	g\cdot f \approx 0 \wedge \forall x \exists y, e \forall h :  g\cdot x \approx 0 \rightarrow \left( f\cdot y \approx x\cdot e \wedge \left(h\cdot e \approx 0 \rightarrow h \approx 0\right) \right),
\end{align}
where we have omitted the sorts of the variables for better readability.

Building upon exact sequences, also other common properties of morphisms in abelian categories can be encoded in our language, 
such as, being a mono- or epimorphism, or being a kernel or cokernel.
This allows to treat and efficiently prove many classical diagram lemmas with our framework, for example, the following 
\emph{Four lemma}.

\begin{lemma}[Four Lemma]
\label{lemma:four-lemma}
Let
\begin{center}
\begin{tikzpicture}[baseline=3]
    \matrix (m) [matrix of math nodes, row sep=9mm, column sep=8mm]
       {A & B & C & D\\
      A' & B' & C' & D' \\};
    \path[->] (m-1-1) edge node [auto] {$f$} (m-1-2);
    \path[->] (m-1-2) edge node [auto] {$g$} (m-1-3);
    \path[->] (m-1-3) edge node [auto] {$h$} (m-1-4);
    \path[->] (m-2-1) edge node [below] {$f'$} (m-2-2);
    \path[->] (m-2-2) edge node [below] {$g'$} (m-2-3);
    \path[->] (m-2-3) edge node [below] {$h'$} (m-2-4);
    \path[->] (m-1-1) edge node [auto] {$\alpha$} (m-2-1);
    \path[->] (m-1-2) edge node [auto] {$\beta$} (m-2-2);
    \path[->] (m-1-3) edge node [auto] {$\gamma$} (m-2-3);
    \path[->] (m-1-4) edge node [auto] {$\delta$} (m-2-4);
  \end{tikzpicture}
\end{center}
be a commutative diagram with exact rows.
If $\beta, \delta$ are monic and $\alpha$ is epi, then $\gamma$ is also monic.
\end{lemma}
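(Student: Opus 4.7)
The plan is to apply Procedure~\ref{algo semantic consequence} to a translation of the statement into first-order logic. As a first step, I would fix a signature with ten object symbols, one for each vertex of the diagram, together with constant symbols for the ten diagram morphisms, for the identity morphisms $I_A, \dots, I_{E'}$ (treated via the universal rewrite rules $X \cdot I_V \mapsto X$ and $I_W \cdot X \mapsto X$, as in Section~\ref{sec:Moore-Penrose}), and for the postulated inverses $a^{-1}, b^{-1}, d^{-1}, e^{-1}$. The formula $\varphi$ to be proven is the implication whose conclusion is $\exists c^{-1} \colon c \cdot c^{-1} \approx I_{C'} \wedge c^{-1} \cdot c \approx I_C$, and whose assumptions consist of the inverse laws $a a^{-1} \approx I_{A'}$, $a^{-1} a \approx I_A$ (and analogues for $b, d, e$), the four commutativity equations $bf \approx f'a$, $cg \approx g'b$, $dh \approx h'c$, $ei \approx i'd$, and, at each of the six interior positions, a pair of exactness axioms. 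At the position $C$ of the top row, these are $hg \approx 0$ together with $\forall x^{(w,C)} \exists y^{(w,B)} \colon hx \approx 0 \rightarrow gy \approx x$ for each $w \in O$, and analogously at the other five positions.

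After Herbrandisation each lifting axiom contributes a fresh unary function symbol; following the classical diagram chase, the relevant ones for us are a lift $\lambda_h$ along $h$ (from exactness at $D$), a lift $\lambda_{g'}$ along $g'$ (from exactness at $C'$), a lift $\mu_g$ along $g$ (from exactness at $C$), and a lift $\mu_{f'}$ along $f'$ (from exactness at $B'$). The surjectivity half of the chase dictates the candidate
\[
c^{-1} \,\approx\, \lambda_h(d^{-1} h') \,-\, g \cdot b^{-1} \cdot \lambda_{g'}\!\bigl(c \cdot \lambda_h(d^{-1} h') - I_{C'}\bigr),
\]
where the two applications of the $\lambda$-symbols are well-posed because their arguments are annihilated by $i$ (using $i d^{-1} h' = e^{-1} i' h' \approx 0$) and by $h'$ (by construction), respectively. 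For the second identity $c^{-1} \cdot c \approx I_C$, one plugs this candidate into the dual chase applied to $v \coloneqq c^{-1} c - I_C$, which requires instantiating $\mu_g$ on $v$ and $\mu_{f'}$ on $b \cdot \mu_g(v)$; the conclusion $v \approx 0$ then collapses via $gf \approx 0$ and the relation $b^{-1} f' \approx f a^{-1}$ that follows from commutativity of the leftmost square. Collecting these with the corresponding assumption instances produces a finite ground disjunction $\psi_n$ in the Herbrand expansion.

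From this point the procedure runs mechanically. Ackermann's reduction replaces each distinct ground instance of $\lambda_h, \lambda_{g'}, \mu_g, \mu_{f'}$ and of the ${}^{-1}$-symbols by a fresh constant, adds the corresponding functional-consistency constraints, and yields an arithmetic ground sentence $\psi_n^\text{Ack}$. By Theorem~\ref{thm nullstellensatz}, $\AA \models \varphi$ then reduces to $\Id(\psi_n^\text{Ack}) = \top$, which unpacks into a finite list of ideal memberships in a free algebra $\Z\<X>$ whose indeterminates are the diagram arrows, the four inverses, the identity morphisms, and the constants introduced by Ackermann. Each membership can be certified by a noncommutative Gr\"obner basis computation, and the incremental strategy of Section~\ref{sec computational aspects} lets one fold in the lifting implications only when needed, keeping the total number of polynomial checks small.

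The main obstacle will be producing the correct ground instances of the lifting axioms: the Herbrand expansion is infinite and the naive enumeration of line~\ref{line enumeration} has no chance of reaching the chase witnesses above in a reasonable time. Following the strategy used for $x^\dagger$ in the Moore-Penrose example, I would automate this step by replacing $c^{-1}$ by a dummy constant, instantiating each $\lambda$- and $\mu$-symbol on fresh auxiliary constants of the correct sort, forming the polynomial ideal generated by the translated assumptions, and computing a Gr\"obner basis with respect to an elimination order isolating the dummy; a basis element of the form $c^{-1} - (\text{expression})$ would then reveal the construction displayed above and simultaneously furnish the required ideal membership certificate.
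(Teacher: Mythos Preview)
Your encoding of exactness is too strong for abelian categories, and this is a genuine gap. The axiom you write at $C$, namely $\forall x^{(w,C)}\,\exists y^{(w,B)}\colon hx\approx 0 \rightarrow gy\approx x$, asserts that every \emph{morphism} $x\colon W\to C$ killed by $h$ lifts along $g$. In an abelian category exactness only says $\ker(h)=\operatorname{im}(g)$; such a morphism $x$ factors through $\operatorname{im}(g)$, but $B\twoheadrightarrow\operatorname{im}(g)$ is merely an epimorphism and need not admit a section, so the lift $y$ does not exist in general. Concretely, your very first step applies $\lambda_h$ to $d^{-1}h'\colon C'\to D$, which asks for a morphism $C'\to C$ hitting $d^{-1}h'$ under $h$; there is no reason such a morphism exists in an arbitrary abelian category. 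The same objection applies to $\lambda_{g'}$, $\mu_g$, and $\mu_{f'}$. Your formula for $c^{-1}$ is the classical element chase written at the morphism level, and it is correct \emph{provided} those lifts exist, but as stated you have only proved the Five Lemma under an additional projectivity-type hypothesis, not in an arbitrary abelian category.

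The paper takes a different route precisely to avoid this. Rather than hard-coding a lifting axiom, it encodes exactness through the honest universal properties of kernels and cokernels (introducing new object symbols and constants such as $\ker_f$), and then runs an iterative procedure that alternates between (i) searching the current ideal for instances that trigger implication hypotheses, (ii) applying abelian-category axioms to elements already in the ideal, thereby enlarging both the signature and the ideal, and (iii) testing whether the claims lie in the ideal. This is how the chase is carried out without ever assuming morphism-level lifts; the paper reports that three iterations suffice, in a free algebra with over a hundred indeterminates. If you want to salvage your approach, you would need either to replace your lifting axioms by the kernel/cokernel encoding the paper describes at the start of Section~\ref{sec:diagram-chases}, or to argue externally via the Freyd--Mitchell embedding, which lies outside the present framework.
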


Using an adaptation of Procedure~\ref{algo semantic consequence}, which we have implemented in a
\textsc{Mathematica} package \texttt{OperatorGB}\footnote{available at \url{https://github.com/ClemensHofstadler/OperatorGB}},
the Four lemma can be proven fully automatically in less than 20 seconds.
We refer to~\cite[Sec.~7.3]{Hof23} for further details on the computation together with instructions on how to use the software.

\section*{Acknowledgements}

We thank the anonymous reviewer of a first version of this paper for his comprehensive feedback, which greatly helped to improve the presentation of this work.
The first author was supported by the Austrian Science Fund (FWF) grant P32301.
The second author was supported by the Austrian Science Fund (FWF) grant P31952.

\bibliography{References} 
\bibliographystyle{plain}

\appendix
\section{Many-sorted logic}
\label{sec:many-sorted-logic}

Classical first-order logic is an established logic, satisfying many important theorems such as the Löwenheim-Skolem theorem~\cite[Thm.~VI.1.1]{EFT94} or the compactness theorem~\cite[Thm.~VI.2.1]{EFT94}.
It also possesses a sound and complete calculus, see, for example,~\cite[Ch.~IV]{EFT94}.
Classical first-order logic can be considered unsorted, as the structures used only consist of a single universe or domain of objects.
In mathematics or computer science, however, we often formalise statements involving different kinds of objects.
The goal of \emph{many-sorted first-order logic} is to provide a natural setting for modelling such situations.

In the following sections, we recall the syntax and semantics of many-sorted first-order logic, and we discuss a sound and complete calculus for it.
Our main reference for this part is~\cite{Man93}, but we also include notation from~\cite{EFT94}.

\subsection{Syntax}

The syntax of a logic describes the allowed symbols of the language and how they can be composed to construct (well-formed) terms and formulas.
In case of many-sorted first-order logic, we fix the following basic sets that form our alphabet.
We assume that all these sets are pairwise disjoint.

We fix a denumerable set $\Sort = \{s_1,s_2,\dots \}$ of \emph{sorts}.
Each sort can describe one type of object we want to model.
We also fix denumerable sets $\Con = \{c_1,c_2,\dots\}$ and $\Fun = \{f_1,f_2,\dots\}$ of 
\emph{constant symbols} and \emph{function symbols} respectively.
These symbols allow to model dedicated constants in structures as well as functions on them.
Note that we do not fix any predicate symbols as we will only work with one special predicate symbol $\approx$ for equality, which we will interpret as identity.
For a description of many-sorted first-order logic including arbitrary predicate symbols, we refer to~\cite{Man93}.
Finally, we fix a denumerable set $\Var = \{x_1,x_2,\dots\}$ of \emph{variables}. 

A \emph{signature} lists and describes the non-logical symbols of a logic that are relevant in a particular context.
In many-sorted logic, a signature is defined as follows.

\begin{definition}\label{def:signature-logic}
A \emph{signature} is a tuple $\Sigma = (\bm{S},\bm{C},\bm{F},\sigma)$ consisting of
\begin{enumerate}
	\item a set of sorts $\bm{S} \subseteq \Sort$;
	\item a set of constant symbols $\bm{C} \subseteq \Con$;
	\item a set of function symbols $\bm{F} \subseteq \Fun$;
	\item a \emph{sort function} $\sigma\colon \Var \cup \bm{C} \cup \bm{F} \to \bigcup_{n \geq 1} \bm{S}^n$ satisfying the following conditions:
	\begin{enumerate}
	\item $\sigma(x) \in \bm{S}$ for all $x \in \Var \cup \bm{C}$;\label{cond:sigma-1}
	\item $\sigma(f) \in \bigcup_{n > 1} \bm{S}^n$ for all $f \in \bm{F}$;\label{cond:sigma-2}
	\end{enumerate}
\end{enumerate}
\end{definition}

The set $\bm{S}$ defines the relevant sorts of the signature.
The sets $\bm{C}$ and $\bm{F}$ determine which constant and function symbols the signature contains.
The sort function $\sigma$ assigns to every variable and to all relevant symbols their \emph{sort}.
Conditions~\eqref{cond:sigma-1} and~\eqref{cond:sigma-2} ensure that variables and constant symbols get mapped to a single sort $s \in \bm{S}$,
while function symbols get mapped to a tuple of sorts $(s_1,\dots,s_n,s) \in \bm{S}^{n+1}$.

\begin{remark}\label{remark:function-symbols}
For a function symbol $f$, we write $\sigma(f) = s_1 \times \dots \times s_n \to s$ instead of $\sigma(f) = (s_1,\dots,s_n,s)$. 
The quantity $n$ is called the \emph{arity} of $f$.
\end{remark}

In the following, we fix a signature $\Sigma = (\bm{S},\bm{C},\bm{F},\sigma)$.
Within this signature, we can construct \emph{terms} and \emph{formulas}.

\begin{definition}
The set $\Term(\Sigma,s)$ of \emph{terms of sort $s \in \bm{S}$} is the smallest set satisfying the following conditions:
\begin{enumerate}
	\item $\{x \in \Var \cup \bm{C} \mid \sigma(x) = s\} \subseteq \Term(\Sigma,s)$;
	\item if $f \in \bm{F}$ with $\sigma(f) = s_1 \times \dots \times s_n \to s$ and $t_i \in \Term(\Sigma,s_{i})$ for $i = 1,\dots,n$,
	then $f(t_1,\dots,t_n) \in \Term(\Sigma,s)$;
\end{enumerate}

For a term $t \in \Term(\Sigma,s)$, its \emph{sort} is $\sigma(t) \coloneqq s$.
The set $\Term(\Sigma)$ of \emph{terms} is given by
\[
	\Term(\Sigma)\coloneqq \bigcup_{s \in \bm{S}} \Term(\Sigma,s).
\]
\end{definition}

A term is called \emph{ground} if it does not contain any variables.
The set of all ground terms is denoted by $\Ground(\Sigma)$.
Note that this set is countable.

\begin{example}[label=ex:terms]
Consider the signature $\Sigma = (\bm{S},\bm{C},\bm{F},\sigma)$ with $\bm{S} = \{s_{1},s_{2}\}$, $\bm{C} = \{c,d\}$, $\bm{F} = \{f,g\}$, and $\sigma$ such that
\[
	\sigma(c) = s_{1}, \qquad \sigma(d) = s_{2}, \qquad \sigma(f) = s_{1} \times s_{1} \to s_{2}, \qquad \sigma(g) = s_{2} \to s_{1}.
\]
Furthermore, let $x,y \in \Var$ with $\sigma(x) = s_{1}$ and $\sigma(y) = s_{2}$.

Within this signature, the expression $f(x,y)$ is \emph{not} a well-formed term because $y$ has the wrong sort.
Examples of well-formed terms are all the basic symbols $c,d,x,y$, but also $f(x,x)$ or $g(f(g(d), c))$.
Note that $c, d$ as well as $g(f(g(d), c))$ are ground terms, while $f(x,x)$ is not.
\end{example}

\begin{definition}\label{def:formulas}
The set $\Formulas(\Sigma)$ of \emph{formulas} is the smallest set satisfying the following conditions:
\begin{enumerate}
	\item if $t,t' \in \Term(\Sigma,s)$ for some $s \in \bm{S}$, then $t \approx t' \in \Formulas(\Sigma)$;\label{item:formulas-1}
	\item if $\varphi \in \Formulas(\Sigma)$, then $\lnot \varphi \in \Formulas(\Sigma)$;
	\item if $\varphi, \psi \in \Formulas(\Sigma)$, then $(\varphi \ast \psi) \in \Formulas(\Sigma)$ with $\ast \in \{\vee, \wedge, \rightarrow\}$;
	\item if $\varphi \in \Formulas(\Sigma)$ and $x \in \Var$, then $Q x : \varphi \in \Formulas(\Sigma)$ with $Q \in \{\exists, \forall\}$;
\end{enumerate}
\end{definition}

For convenience in notation, we follow the common convention to drop the outermost parentheses of formulas, and we assume the following precedence of the logical operators:
$\lnot, \vee, \wedge, \rightarrow, \exists, \forall$.
Furthermore, we abbreviate a block of consecutive equally quantified variables $Qx_1Qx_2\dots Qx_k$ with $Q \in \{\exists,\forall\}$ by $Qx_1,x_2,\dots, x_k$, or simply by $Q\bm{x}$.
Furthermore, to indicate the scope of a quantifier, we also write $Q \bm{x} : \varphi(\bm{x})$.

We recall some standard definitions from (many-sorted) first-order logic. 
Formulas constructed in part~\ref{item:formulas-1} of Definition~\ref{def:formulas} are called \emph{atomic formulas}.
A \emph{literal} is an atomic formula or its negation.
In the following, we write $t \not\approx t'$ for the literal $\lnot (t \approx t')$.
In the last part of Definition~\ref{def:formulas}, $Q$ is called the \emph{quantifier} of $x$ and $\varphi$ is the \emph{scope} of $Qx$.
Any occurrence of the variable $x$ in the scope $\varphi$ is called \emph{bound}.
All non-quantified occurrences of a variable, that is, all occurrences which are not in the scope of a quantifier, are called \emph{free} occurrences.
A variable $x$ is \emph{bound} (resp.~\emph{free}) in a formula $\varphi$ if there is a bound (resp.~free) occurrence of $x$ in $\varphi$.
We denote the set of all free variables in a formula $\varphi$ by $\Free(\varphi)$.
If $\Free(\varphi) = \emptyset$, then $\varphi$ is \emph{sentence}.
The set of all sentences is denoted by $\Sent(\Sigma)$.
Furthermore, a formula without any quantifiers is called \emph{quantifier-free}, and a quantifier-free sentence is called \emph{ground}.
Note that a ground sentence does not contain any variables.

\begin{example}[label=ex:formulas]
We reconsider the signature $\Sigma = (\bm{S},\bm{C},\bm{F},\sigma)$ from Example~\ref{ex:terms}, that is,
$\bm{S} = \{s_{1},s_{2}\}$, $\bm{C} = \{c,d\}$, $\bm{F} = \{f,g\}$, and $\sigma$ such that
\[
	\sigma(c) = s_{1}, \qquad \sigma(d) = s_{2}, \qquad \sigma(f) = s_{1} \times s_{1} \to s_{2}, \qquad \sigma(g) = s_{2} \to s_{1}.
\]
Furthermore, we let $x,y \in \Var$ with $\sigma(x) = s_{1}$ and $\sigma(y) = s_{2}$.

Within this signature, the expression $f(x,x) \approx c$ is \emph{not} a well-formed formula because $f(x,x)$ and $c$ have different sorts.
Examples of well-formed formulas are $f(x,x) \approx f(g(d), c)$ or $\forall x : \left( x \not\approx c \vee \exists y : g(y) \approx c \right)$.
The first formula is a quantifier-free atomic formula.
Note that it contains the free variable $x$, and is therefore not a sentence.
The second formula is a sentence because both variables that appear in it are bound, however it is not ground as it is not quantifier-free.
An example of a ground sentence is $c \approx g(d) \rightarrow f(c,g(d)) \approx f(g(d),c)$.
\end{example}

We denote by $\varphi[x \mapsto t]$ the substitution of the variable $x$ by the term $t$ in the formula~$\varphi$.
Such a substitution is only possible if $x$ and $t$ are of the same sort.
The result of this substitution is the formula obtained by replacing all free occurrences of $x$ by~$t$, with renaming, if necessary, those bound variables in $\varphi$
that coincide with a variable appearing in $t$. 
More precisely, we have the following formal definition of the substitution process, which we first specify for terms and then extend to formulas.

\begin{definition}
Let $x \in \Var$ and $t \in \Term(\Sigma)$ such that $\sigma(x) = \sigma(t)$.
The \emph{substitution} of $x$ by $t$ in a term is recursively defined as follows, where $y \in \Var$, $c \in \bm{C}$, and $f \in \bm{F}$:
\begin{align*}
	y[x \mapsto t] &\quad\coloneqq\quad \begin{cases} t &\text{if }x = y \\
						y	& \text{otherwise}
				\end{cases} \\
	c[x \mapsto t] &\quad\coloneqq\quad c \\
	f(t_1,\dots,t_n)[x \mapsto t] &\quad\coloneqq\quad f(t_1[x \mapsto t],\dots,t_n[x \mapsto t])
\end{align*} 
The \emph{substitution} of $x$ by $t$ in a formula is recursively defined as:
\begin{align*}
	(s \approx s')[x \mapsto t] &\quad\coloneqq\quad s[x \mapsto t] \approx s'[x \mapsto t] \\
	(\lnot \varphi)[x \mapsto t] &\quad\coloneqq\quad \lnot (\varphi[x \mapsto t]) \\
	(\varphi \ast \psi)[x \mapsto t] &\quad\coloneqq\quad \varphi[x \mapsto t] \ast \psi[x \mapsto t]\\
	(Qy : \varphi)[x \mapsto t] &\quad\coloneqq\quad \begin{cases} 
										Qy : \varphi &\text{if }x \notin \Free(Qy : \varphi) \\
										Qy : \varphi[x \mapsto t]  &\text{if }x \in \Free(Qy : \varphi), y \notin \Free(t)\\
										Qz : (\varphi[y \mapsto z])[x \mapsto t] &\text{if }x \in \Free(Qy : \varphi), y \in \Free(t),\\[-0.4em]
														&\text{and }z\text{ is a new variable with}\\[-0.4em]
														&\sigma(z) = \sigma(y)
									\end{cases}
\end{align*}
Here, $t_1,\dots,t_n,s,s' \in \Term(\Sigma)$, $\varphi, \psi \in \Formulas(\Sigma)$, $\ast \in \{\vee, \wedge, \rightarrow\}$, and $Q \in \{\exists,\forall\}$.
\end{definition}

\begin{example}
Consider a signature $\Sigma$ containing a function symbol $f \colon s_{1} \times s_{2} \to s_{1}$.
Furthermore, let $x,y,z \in \Var$ be such that $\sigma(x) = \sigma(y) = s_{1}$ and $\sigma(z) = s_{2}$.
We have, for example, 
\begin{enumerate}
	\item $(x \approx f(y,z))[x \mapsto f(x,z)] = f(x,z) \approx f(y,z)$;
	\item $\left(\exists x: x \approx f(y,z)\right)[x \mapsto f(x,z)] = \exists x: x \approx f(y,z)$;
	\item $\left(\exists y: x \approx f(y,z)\right)[x \mapsto f(x,z)] = \exists y: f(x,z) \approx f(y,z)$;
	\item $\left(\exists z: x \approx f(y,z)\right)[x \mapsto f(x,z)] = \exists z': f(x,z) \approx f(y,z')$, with $z'$ such that $\sigma(z') = \sigma(z) = s_{2}$;
\end{enumerate}
\end{example}

\subsection{Semantics}
\label{sec:preliminaries-semantics}

In ordinary first-order logic, formulas are assigned truth values by an \emph{interpretation} in a universe $A$.
In many-sorted logic, we no longer have a single universe but a dedicated domain $A_s$ for each sort $s \in \bm{S}$.
These domains, together with constants and functions that interpret the constant and function symbols, form a \emph{structure}.
Recall that we fixed a signature $\Sigma = (\bm{S},\bm{C},\bm{F},\sigma)$.

\begin{definition}
A \emph{structure} $A$ of signature $\Sigma$ is a tuple
\[
	A = \left((A_s)_{s \in \bm{S}}, (c^A)_{c \in \bm{C}}, (f^A)_{f \in \bm{F}}\right)
\]
satisfying the following conditions:
\begin{enumerate}
	\item $(A_s)_{s \in \bm{S}}$ is a family of nonempty sets;
	\item $ (c^A)_{c \in \bm{C}}$ is a family of elements such that $\sigma(c) = s$ implies $c^A \in A_s$;
	\item  $(f^A)_{f \in \bm{F}}$ is a family of functions such that $\sigma(f) = s_1 \times \dots \times s_n \to s$ implies
	\[
		f^A\colon A_{s_1} \times \dots \times A_{s_n} \to A_{s};
	\]
\end{enumerate}
\end{definition}

Given a structure $A$ of signature $\Sigma$, an \emph{assignment} on $A$ is a function 
\[
	\assign \colon \Var \to \bigcup_{s \in \bm{S}} A_{s}
\]
such that $\assign(x) \in A_{s}$ if and only if $\sigma(x) = s$ for all $x \in \Var$.

\begin{definition}
Let $A$ be a structure of signature $\Sigma$ and let $\assign$ be an assignment on $A$.
The pair $\mathcal{I} = (A,\assign)$ is called an \emph{interpretation}.
\end{definition}

An interpretation allows to map terms to elements in the different universes.
More precisely, the interpretation $\mathcal{I}(t)$ of a term $t \in \Term(\Sigma)$ is defined as
\[
	\mathcal{I}(t) \coloneqq
	\begin{cases} 
		\assign(x) & \text{ if }t = x \in \Var \\
		c^A & \text{ if }t = c \in \bm{C} \\
		f^A(\mathcal{I}(t_1),\dots,\mathcal{I}(t_n)) & \text{ if }t = f(t_1,\dots,t_n) \text{ with } f \in \bm{F}
	\end{cases}
\]
Note that all expressions above are well-defined.
In particular, if $t$ is of sort $s$, then $\mathcal{I}(t)$ is an element in $A_s$.

Using interpretations of terms, we can interpret formulas.
To do this, we introduce the following notation.
If $\assign$ is an assignment on a structure $A$, $x \in \Var$ is a variable of sort $s$ and $a \in A_{s}$, then
$\assign[x \mapsto a]$ denotes the function
\[
	\assign[x \mapsto a]\colon \Var \to \bigcup_{s \in \bm{S}} A_{s},
\]
that maps every $y \neq x$ to $\assign(y)$ and $x$ to $a$.
This function is clearly also an assignment on~$A$.
Furthermore, for $\mathcal{I} = (A,\assign)$, let $\mathcal{I}[x \mapsto a] = (A, \assign[x \mapsto a])$.

With this, the interpretation of a formula $\varphi \in \Formulas(\Sigma)$ is the truth value $\mathcal{I}(\varphi) \in \{\top,\bot\}$ defined as follows:
\begin{align*}
	 \mathcal{I}(t \approx t') &\coloneqq \top &&\text{iff} &&\mathcal{I}(t) = \mathcal{I}(t') \text{ as elements in } A_{\sigma(t)}\\
	 \mathcal{I}(\lnot \varphi) &\coloneqq \top &&\text{iff} &&\mathcal{I}(\varphi) = \bot\\
	 \mathcal{I}(\varphi \vee \psi) &\coloneqq \top &&\text{iff} &&\mathcal{I}(\varphi) = \top \text{ or } \mathcal{I}(\psi) = \top\\
	 \mathcal{I}(\varphi \wedge \psi) &\coloneqq \top &&\text{iff} &&\mathcal{I}(\varphi) = \top \text{ and } \mathcal{I}(\psi) = \top\\
	 \mathcal{I}(\varphi \rightarrow \psi) &\coloneqq \top &&\text{iff} &&\mathcal{I}(\varphi) = \bot \text{ or } \mathcal{I}(\psi) = \top\\
	 \mathcal{I}(\forall x : \varphi) &\coloneqq \top &&\text{iff} &&\text{for all } a \in A_{\sigma(x)} \text{ it holds that } \mathcal{I}[x \mapsto a](\varphi) = \top\\
	 \mathcal{I}(\exists x : \varphi) &\coloneqq \top &&\text{iff} &&\text{there exists some } a \in A_{\sigma(x)} \text{ such that } \mathcal{I}[x \mapsto a](\varphi) = \top
\end{align*}

\begin{remark}\label{remark:equality}
The equality symbol $\approx$ is always interpreted as the identity in the structure $A$.
Such structures that interpret equality as identity are called \emph{normal}.
We only consider normal structures in this work.
\end{remark}

An interpretation $\mathcal{I}$ is a \emph{model} of a formula $\varphi$ if $\mathcal{I}(\varphi) = \top$.
Furthermore, $\varphi$ is \emph{valid} if every interpretation is a model of $\varphi$.
Two formulas $\varphi$ and~$\psi$ are \emph{logically equivalent} if they have the same models.

Similarly, an interpretation is a model of a set of formulas $\Phi \subseteq \Formulas(\Sigma)$ if it is a model of every formula in $\Phi$,
and $\Phi$ is \emph{valid} if every formula in $\Phi$ is valid.

A formula $\varphi$ is a \emph{semantic consequence} of a set of formulas $\Phi$, if each model of $\Phi$ is also a model of $\varphi$.
In this case, we write $\Phi \models \varphi$.
If $\varphi$ is not a semantic consequence of $\Phi$, we write $\Phi \not\models \varphi$. 
Two formulas $\varphi$ and $\psi$ are \emph{$\Phi$-equivalid} if $\Phi \models \varphi$ if and only if $\Phi \models \psi$.

%

\subsection{Formal computations}
\label{sec:formal-comp}

In this section, we recall how to validate that a formula $\varphi$ is a semantic consequence of a set of formulas $\Phi$ by performing syntactic operations.
Such a syntactic computation is called a \emph{formal computation} and one way to perform it is by applying certain syntactic rules, so-called \emph{sequent rules}, to the formulas at hand.
Several sequent rules form a \emph{sequent calculus}.

We recall the sequent calculus LK$^=$ for many-sorted first-order logic with equality.
This calculus -- for ordinary first-order logic without equality -- was first introduced by Gentzen~\cite{Gen35} and has subsequently been extended to include equality,
see, e.g.,~\cite{DV98} for further information in the unsorted case.
As illustrated in~\cite[Sec.~2]{Man93}, the adaptation to the many-sorted case is straightforward.

We note that there also exist other inference systems for (many-sorted) first-order logic.
We focus on the sequent calculus LK$^=$ in this work as this inference system allows a natural translation of formal computations with formulas into algebraic computations with noncommutative polynomials (see Section~\ref{sec:arithmetic-statements}).
In particular, the specific structure of the sequent calculus, providing a separate treatment for each logical connective, allows us to prove Lemma~\ref{lemma preserve contradiction}.

A \emph{sequent} is an expression of the form
$\Gamma \vdash \Delta$
where $\Gamma$ and $\Delta$ are finite (possibly empty) multisets of formulas.
A sequent can be considered as an assertion of the form ``whenever \emph{all} $\varphi \in \Gamma$ are true, then \emph{at least one} $\psi \in \Delta$ is also true''.
An expression of the form $\Gamma, \varphi$ in a sequent represents the multiset $\Gamma \cup \{\varphi\}$.
In particular, $\varphi_1,\dots,\varphi_m \vdash \psi_1,\dots,\psi_n$ means $\{\varphi_1,\dots,\varphi_m\} \vdash \{\psi_1,\dots,\psi_n\}$.

\emph{Sequent rules} allow us to pass from one sequent to another.
The sequent calculus LK$^=$ comprises the following sequent rules.
In the following, $\alpha$ denotes an atomic formula, $\varphi, \psi$ are arbitrary formulas, and $t,t'$ are terms.
\begin{alignat*}{4}
\intertext{\textbf{Axioms}}
&\text{(Ax) }
\centerAlignProof
&&\AxiomC{}
\def\fCenter{\ \vdash\ }
\UnaryInfC{$\Gamma, \alpha \fCenter \Delta, \alpha$}
\DisplayProof
\hskip 4em
&&\text{(Ref) }
\centerAlignProof
&&\AxiomC{}
\def\fCenter{\ \vdash\ }
\UnaryInf$\Gamma \fCenter \Delta, t \approx t$
\DisplayProof
\intertext{\textbf{Structural rules}}
&
\text{($\text{W}\ \vdash)$\;\; }
&&
\def\fCenter{\ \vdash\ }
\Axiom$\Gamma \fCenter \Delta$
\UnaryInf$\Gamma, \varphi \fCenter \Delta$
\DisplayProof
\hskip 4em
&&
\text{($\vdash\ \text{W})$\;\; }
&&
\def\fCenter{\ \vdash\ }
\Axiom$\Gamma \fCenter \Delta$
\UnaryInf$\Gamma \fCenter \Delta, \varphi$
\DisplayProof
\\[2em]
&
\text{($\text{C}\ \vdash)$ }
&&\def\fCenter{\ \vdash\ }
\Axiom$\Gamma, \varphi, \varphi \fCenter \Delta$
\UnaryInf$\Gamma, \varphi \fCenter \Delta$
\DisplayProof
\hskip 4em
&&
\text{($\vdash\ \text{C})$ }
&&\def\fCenter{\ \vdash\ }
\Axiom$\Gamma \fCenter \Delta, \varphi, \varphi$
\UnaryInf$\Gamma \fCenter \Delta, \varphi$
\DisplayProof
\intertext{\textbf{Propositional rules}}
&
\text{($\lnot\ \vdash)$ }
&&\def\fCenter{\ \vdash\ }
\Axiom$\Gamma \fCenter \Delta, \varphi$
\UnaryInf$\Gamma, \lnot \varphi \fCenter \Delta$
\DisplayProof
\hskip 4em
&&
\text{($\vdash\ \lnot)$ }
&&\def\fCenter{\ \vdash\ }
\Axiom$\Gamma, \varphi \fCenter \Delta$
\UnaryInf$\Gamma \fCenter \Delta, \lnot \varphi$
\DisplayProof
\\[2em]
&
\text{($\vee\ \vdash)$ }
&&\def\fCenter{\ \vdash\ }
\Axiom$\Gamma, \varphi \fCenter \Delta$
\Axiom$\Gamma, \psi \fCenter \Delta$
\BinaryInf$\Gamma, \varphi \vee \psi \fCenter \Delta$
\DisplayProof
\hskip 4em
&&
\text{($\vdash\ \vee)$ }
&&\def\fCenter{\ \vdash\ }
\Axiom$\Gamma \fCenter \Delta, \varphi, \psi$
\UnaryInf$\Gamma \fCenter \Delta, \varphi \vee \psi$
\DisplayProof
\\[2em]
&
\text{($\wedge\ \vdash)$ }
&&\def\fCenter{\ \vdash\ }
\Axiom$\Gamma, \varphi, \psi \fCenter \Delta$
\UnaryInf$\Gamma, \varphi \wedge \psi \fCenter \Delta$
\DisplayProof
\hskip 4em
&&
\text{($\vdash\ \wedge)$ }
&&\def\fCenter{\ \vdash\ }
\Axiom$\Gamma \fCenter \Delta, \varphi$
\Axiom$\Gamma \fCenter \Delta, \psi$
\BinaryInf$\Gamma \fCenter \Delta, \varphi \wedge \psi$
\DisplayProof
\\[2em]
&
\text{($\rightarrow\ \vdash)$ }
&&\def\fCenter{\ \vdash\ }
\Axiom$\Gamma \fCenter \Delta, \varphi$
\Axiom$\Gamma, \psi \fCenter \Delta$
\BinaryInf$\Gamma, \varphi \rightarrow \psi \fCenter \Delta$
\DisplayProof
\hskip 4em
&&
\text{($\vdash\ \rightarrow)$ }
&&\def\fCenter{\ \vdash\ }
\Axiom$\Gamma, \varphi \fCenter \Delta, \psi$
\UnaryInf$\Gamma \fCenter \Delta, \varphi \rightarrow \psi$
\DisplayProof
\end{alignat*}

\begin{alignat*}{2}
\intertext{\textbf{Quantification rules}}
&
\text{($\exists\ \vdash)$\;\; }
\RightLabel{if $y$ is a new variable with $\sigma(y) = \sigma(x)$}
&&\def\fCenter{\ \vdash\ }
\Axiom$\Gamma, \varphi[x \mapsto y] \fCenter \Delta$
\UnaryInf$\Gamma, \exists x : \varphi \fCenter \Delta$
\DisplayProof
\\[2em]
&
\text{($\vdash\ \exists)$ }
\RightLabel{if $t$ is a term with $\sigma(t) = \sigma(x)$}
&&\def\fCenter{\ \vdash\ }
\Axiom$\Gamma \fCenter \Delta, \varphi[x \mapsto t]$
\UnaryInf$\Gamma \fCenter \Delta, \exists x : \varphi$
\DisplayProof
\\[2em]
&
\text{($\forall\ \vdash)$ }
\RightLabel{if $t$ is a term with $\sigma(t) = \sigma(x)$}
&&\def\fCenter{\ \vdash\ }
\Axiom$\Gamma, \varphi[x \mapsto t] \fCenter \Delta$
\UnaryInf$\Gamma, \forall x : \varphi \fCenter \Delta$
\DisplayProof
\\[2em]
&
\text{($\vdash\ \forall)$ }
\RightLabel{if $y$ is a new variable with $\sigma(y) = \sigma(x)$}
&&\def\fCenter{\ \vdash\ }
\Axiom$\Gamma \fCenter \Delta, \varphi[x \mapsto y]$
\UnaryInf$\Gamma \fCenter \Delta, \forall x  : \varphi$
\DisplayProof
\intertext{\textbf{Equational rule}}
&
\text{(Sub) }
&&\def\fCenter{\ \vdash\ }
\RightLabel{if $\sigma(x) = \sigma(t)$}
\Axiom$\Gamma, \varphi[x \mapsto t'], t \approx t' \fCenter \Delta$
\UnaryInf$\Gamma, \varphi[x \mapsto t], t \approx t' \fCenter \Delta$
\DisplayProof
\end{alignat*}

A rule without a sequent on top is called an \emph{axiom}.
Any statement that can be derived from the axioms using the rules stated above, is \emph{provable by a formal computation} (cf.~\cite[Def.~IV.1.1]{EFT94}).

\begin{definition}
A formula $\varphi$ is \emph{provable by a formal computation} from a set $\Phi$ of formulas
if there exists a finite subset $\{\varphi_1,\dots,\varphi_n\} \subseteq \Phi$ such that the sequent $\varphi_1,\dots,\varphi_n \vdash \varphi$ is derivable by the rules of the sequent calculus \textup{LK$^{=}$} stated above starting from axioms.
In this case, we write $\Phi \vdash \varphi$.
\end{definition}

The following theorem recalls that LK$^=$ is sound and complete.
Soundness refers to the fact that every formal computation yields a semantic consequence and completeness states that every semantic consequence is provable by a formal computation.
While the former follows easily from the definition of the sequent rules, the latter is essentially the result of \emph{G\"odel's completeness theorem}~\cite{godel1930vollstandigkeit}, see also~\cite[Sec.~2]{Man93} for the many-sorted case.

\begin{theorem}\label{thm gödel}
Let $\Phi \subseteq \Formulas(\Sigma)$ and $\varphi \in \Formulas(\Sigma)$. 
Then $\Phi \models \varphi$ if and only if $\Phi \vdash \varphi$.
\end{theorem}


\end{document}